\theoremstyle{definition}
\newtheorem{theorem}{Theorem}[section]
\newtheorem{lemma}[theorem]{Lemma}
\newtheorem{proposition}[theorem]{Proposition}
\newtheorem{corollary}[theorem]{Corollary}
\theoremstyle{definition}
\newtheorem{definition}[theorem]{Definition}
\newtheorem{remark}[theorem]{Remark}
\newtheorem{remarks}[theorem]{Remarks}
\newtheorem{claim}{Claim}
\definecolor{blue-url}{RGB}{0,0,100}
\definecolor{red-url}{RGB}{100,0,0}
\definecolor{green-url}{RGB}{0,100,0}
\definecolor{light-yellow}{RGB}{255,255,128}
\definecolor{light-blue}{RGB}{193,255,255}
\definecolor{light-red}{RGB}{239,83,80}
\renewcommand{\emptyset}{\varnothing}
\renewcommand{\setminus}{\smallsetminus}
\renewcommand{\,}{\kern 0.1em}
\DeclareMathOperator{\rann}{\textup{r.\textsc{a}nn}}
\DeclareMathOperator{\lann}{\textup{l.\textsc{a}nn}}
\DeclareMathOperator{\fix}{\textup{\textsc{f}ix}}
\DeclareMathOperator{\rfix}{\textup{r.\textsc{f}ix}}
\DeclareMathOperator{\im}{\textsc{I}m}
\DeclareMathOperator{\End}{\textsc{e}nd}
\DeclareMathOperator{\rk}{rk}
\DeclareMathOperator{\hgt}{ht}
\DeclareMathOperator{\udim}{\textup{udim}}
\DeclareMathOperator{\rdim}{\textup{r.udim}}
\providecommand\llb{\llbracket}
\providecommand\rrb{\rrbracket}
\providecommand\id{{\rm id}}
\newcommand{\evid}[1]{\textsf{#1}}
\newline\vspace{\abovedisplayskip}\hbox to \textwidth\bgroup\hss$\displaystyle}
\egroup\vspace{\belowdisplayskip}}
\DeclareFontFamily{OMX}{MnSymbolE}{}
\DeclareSymbolFont{MnLargeSymbols}{OMX}{MnSymbolE}{m}{n}
\DeclareFontShape{OMX}{MnSymbolE}{m}{n}{
	<-6>  MnSymbolE5
	<6-7>  MnSymbolE6
	<7-8>  MnSymbolE7
	<8-9>  MnSymbolE8
	<9-10> MnSymbolE9
	<10-12> MnSymbolE10
	<12->   MnSymbolE12
}{}
\DeclareFontShape{OMX}{MnSymbolE}{b}{n}{
	<-6>  MnSymbolE-Bold5
	<6-7>  MnSymbolE-Bold6
	<7-8>  MnSymbolE-Bold7
	<8-9>  MnSymbolE-Bold8
	<9-10> MnSymbolE-Bold9
	<10-12> MnSymbolE-Bold10
	<12->   MnSymbolE-Bold12
}{}
\let\llangle\@undefined
\let\rrangle\@undefined
\DeclareMathDelimiter{\llangle}{\mathopen}%
{MnLargeSymbols}{'164}{MnLargeSymbols}{'164}
\DeclareMathDelimiter{\rrangle}{\mathclose}%
{MnLargeSymbols}{'171}{MnLargeSymbols}{'171}
\begin{document}
\title{Abstract Factorization Theorems \\ with Applications to I\-dem\-po\-tent Factorizations}
\author{Laura Cossu}
\address{Institute of Mathematics and Scientific Computing, University of Graz | Heinrichstrasse 36/III, 8010 Graz, Austria}
\email{laura.cossu@uni-graz.at}
\urladdr{https://sites.google.com/view/laura-cossu}

\author{Salvatore Tringali}
\address{School of Mathematical Sciences,
Hebei Normal University | Shijiazhuang, Hebei province, 050024 China}
\email{salvo.tringali@gmail.com}
\urladdr{http://imsc.uni-graz.at/tringali}
%
\subjclass[2010]{Primary 06F05, 16U40, 17C27. Secondary 15A23.}
%
%

\keywords{Endomorphism rings, factorization, idempotents, singular elements, matrices, monoids, non-commutative structures, Rickart modules, Rickart rings, von Neumann regularity.}

\begin{abstract}
\noindent{}Let $\preceq$ be a preorder on a monoid $H$ with identity $1_H$ and $s$ be an integer $\ge 2$. The $\preceq$-height of an element $x \in H$ is the supremum of the integers $k \ge 1$ for which there is a (strictly) $\preceq$-decreasing sequence $x_1, \ldots, x_k$ of $\preceq$-non-units of $H$ with $x_1 = x$, where $u \in H$ is a $\preceq$-unit if $u \preceq 1_H \preceq u$ and a $\preceq$-non-unit otherwise.  We say $H$ is $\preceq$-artinian if there is no infinite $\preceq$-decreasing sequence of elements of $H$, and strongly $\preceq$-artinian if the $\preceq$-height of each element is finite.

We establish that, if $H$ is $\preceq$-artinian, then each $\preceq$-non-unit $x \in H$ factors through the $\preceq$-irreducibles of degree $s$, where a $\preceq$-irreducible of degree $s$ is a $\preceq$-non-unit $a \in H$ that cannot be written as a product of $s$ or fewer $\preceq$-non-units each of which is (strictly) smaller than $a$ with respect to $\preceq$.
In addition, we show that, if $H$ is strongly $\preceq$-artinian, then $x$ factors through the $\preceq$-quarks of $H$, where a $\preceq$-quark is a $\preceq$-minimal $\preceq$-non-unit. 
In the process, we obtain upper bounds for the length of a shortest fac\-tor\-i\-za\-tion of $x$ into $\preceq$-irreducibles of degree $s$ (resp., $\preceq$-quarks) in terms of its $\preceq$-height.

Next, we specialize these results to the case in which $H$ is the multiplicative submonoid of a ring $R$ formed by the zero divisors of $R$ (and the identity $1_R$) and $a \preceq b$ if and only if the right annihilator of $1_R-b$ is contained in that of $1_R-a$. If $H$ is $\preceq$-artinian (resp., strongly $\preceq$-artinian), then every zero divisor of $R$ factors as a product of $\preceq$-irreducibles of degree $s$ (resp., $\preceq$-quarks); and we prove that, for a variety of right Rickart rings, either the $\preceq$-quarks or the $\preceq$-irreducibles of degree $2$ or $3$ are coprimitive idempotents (an idempotent $e \in R$ is coprimitive if $1_R-e$ is primitive). In the latter case, we also derive sharp upper bounds for the length of a shortest idempotent factorization of a zero divisor $x \in R$ in terms of the $\preceq$-height of $x$ and the uniform dimension of $R_R$.
In particular, we can thus recover and improve on classical theorems of J.\,A.~Erdos (1967), R.\,J.\,H.~Dawlings (1981), and J.~Fountain (1991) on idempotent factorizations in the endomorphism ring of a free module of finite rank over a skew field or a commutative DVD (e.g., we find that every singular $n$-by-$n$ matrix over a commutative \textup{DVD}, with $n\ge 2$, is a product of $2n-2$ or fewer idempotent matrices of rank $n-1$).
\end{abstract}
\maketitle
\thispagestyle{empty}

\section{Introduction}
\label{sec:intro}

In many branches of mathematics, one is often faced with the problem of proving that every ``large element'' of a monoid is a (finite) product of other elements that are regarded as ``elementary building blocks'' since they cannot
be ``broken down into smaller pieces''. One way to make these ideas precise is to combine the language of monoids with the language of preorders, as recently done by the second author in \cite{Tr20(c)}. This is part of a broader program \cite{Tr19, An-Tr18, Fa-Tr18} whose ultimate goal is to enlarge as much as possible the current boundaries of the ``classical theory of factorization'', whose focus is on commutative or ``nearly cancellative'' monoids and where the building blocks are either atoms in the sense of \cite{Cohn68} or irreducibles in the sense of \cite[Definition 2.4]{AnVL96}. We refer the reader to the papers \cite{GeZh22, GeSch21, ChCoGoSm21, Oh-Zh20, GeSch18, Ba-Sm15}, the surveys \cite{GeZh19, Ge16, BaWi13, BaCh11}, and the vol\-umes \cite{FoHoLu13, GeHK06, And97} for an overview of some trends and typical questions in this field. Here, we concentrate on the case where the building blocks are idempotent elements.

More specifically, let $\mathcal{E}(H)$ be the submonoid generated by the i\-dem\-po\-tents of a (commutative or non-commutative) monoid $H$. An element $x \in H$ has an \evid{i\-dem\-po\-tent factorization} if $x \in \mathcal E(H)$, and $H$ is \evid{i\-dem\-po\-tent-generated} if $H = \mathcal E(H)$. Over the years, a great deal of work has been done to characterize  i\-dem\-po\-tent-generated monoids. The first result in this direction is usually credited to J.\,H.~Howie \cite{Ho66}, who proved in 1966 that the monoid of singular transformations of a finite set $X$ is idempotent-generated. One year later, J.\,A.~Erdos showed in \cite{Er68} that, for every positive integer $n$, the monoid of singular $n$-by-$n$ matrices over a field is generated by idempotent matrices.
Afterwards, research in the area has focused on generalizations of Erdos' theorem
from fields to broader classes of rings. In parallel, various authors have studied the problem of bounding the \evid{minimum length} of (i.e., the minimum number of factors in) an i\-dem\-po\-tent factorization of an idempotent-generated element \cite{Laf89, Ha-OM(b), Ha-OM, Ba78}.

Most notably, it was shown by T.\,J.~Laffey \cite[Theorems 1 and 2]{Laf83} that Erdos' theorem carries over to the singular monoid of the ring of $n$-by-$n$ matrices over $R$ when $R$ is either a skew field or a commutative euclidean domain. Later, J.~Fountain \cite[Theorems 4.1 and 4.6]{Fo91} proved that not only the same is true when $R$ is either the integers or a commutative discrete valuation domain (DVD), but also in each of these circumstances the idempotent factors can be taken to be matrices of rank $n-1$. Laffey's results were subsequently generalized by A.~Alahmadi et al.~\cite{Al-Ja-Lam-Le14} to the case where $R$ is a right- and left-quasi-euclidean domain.
On the other hand, R.\,J.\,H.~Dawlings established in \cite{Da81} that any singular endomorphism of an $n$-dimensional vector space is a product of at most $n$ idempotent linear maps of rank $n-1$, so providing a ``quantitative version'' of Erdos' theorem.
We refer the reader to \cite{Ja-Le2019} for a more thorough account of the literature in this area and to \cite{Co-Za19, Co-Za-Za18} for some recent developments.

In the present work, we address both aspects of the study of i\-dem\-po\-tent factorizations (that is, the existence and the bounds) by an approach that is apparently unprecedented. More precisely, assume $\preceq$ is a preorder on a monoid $H$ and let $s$ be an integer $\ge 2$. After in\-tro\-duc\-ing the notions of $\preceq$-non-unit, degree-$s$ $\preceq$-irreducible, $\preceq$-quark, and $\preceq$-height (Definitions \ref{def:2021-06-01-23:10} and \ref{def:2021-06-01-23:16}), we show that, if $\preceq$ is artinian, (i.e., there is no (strictly) $\preceq$-decreasing sequence $x_1, x_2, \ldots$ in $H$), then each $\preceq$-non-unit $x \in H$ factors into a (finite) product of $\preceq$-irreducibles of degree $s$ (Theorem \ref{thm:3.4}). If $\preceq$ is strongly artinian (i.e., the $\preceq$-height of every element is finite) and further conditions are satisfied, then $x$ factors into a product of $\preceq$-quarks (Theorem \ref{thm:3.5}).
In the process, we also obtain upper bounds for the length of a shortest fac\-tor\-i\-za\-tion of $x$ into $\preceq$-irreducible of degree $s$ (resp.,  $\preceq$-quarks) in terms of its $\preceq$-height.

Next, we specialize these abstract results to the case where $H$ is the monoid of zero divisors of a ring $R$, while $\preceq$ is the pre\-order defined by $a \preceq b$ if and only if the right annihilator of $1_R-b $ is contained in that of $1_R-a$ (Sect.~\ref{sec:3.1}). Accordingly, we prove that, for a variety of rings, either the $\preceq$-quarks or the $\preceq$-irreducibles of degree $2$ or $3$ of $H$ are ``coprimitive idempotents'' (Remarks \ref{rem:right-rickart-rings} and \ref{rem:left-sing-implies-sing}, Theorems \ref{thm:rFix-quarks-in-right-rickart} and \ref{thm:5.8}, and Proposition \ref{dge1}\ref{dge1(ii)}). Moreover, we derive sharp upper bounds for the minimum length of an idempotent factorization of an element $x \in H$ in terms of the $\preceq$-height of $x$ and the uniform dimension of $R_R$ (Proposition \ref{prop:5.10} and Theorem \ref{thm:DVD-main-thm}).
In particular, we can recover and improve on some of the classical theorems reviewed in the above. That is, the Erdos-Dawlings-Laffey theorem on singular matrices over a skew field (Corollary \ref{cor:erdos-dawlings}) and Fountain's theorem on singular matrices over a commutative DVD (Corollary \ref{DVD_matrix}). As an aside, we also obtain a characterization of the (multiplicative) monoid generated by the idempotents of a semisimple ring and a tight upper bound for the minimum length of an idempotent factorization of an element from the same (Remark \ref{rem:semisimple-rings}).

Loosely speaking, Theorems \ref{thm:3.4} and \ref{thm:3.5} work as a sort of black box for a wide range of problems. The inputs of the black box are a monoid $H$ and an artinian or strongly artinian preorder $\preceq$ on $H$; the output is the existence of certain factorizations for every ``large element'' of $H$ (cf.~the first paragraph of this section), where an element is taken to be ``large'' if it is not $\preceq$-equivalent to the identity of $H$ (two elements $x, y \in H$ are $\preceq$-equivalent if $x \preceq y \preceq x$).
Here, we test this approach against the study of i\-dem\-po\-tent factorizations; further applications (e.g., to the classical theory of factorization) are discussed in \cite[Sect.~4.1]{Tr20(c)}, while prospects for future research are outlined in Sect.~\ref{sec:6}.

We note in passing that the study of \emph{commutative} rings with non-trivial zero divisors from the point of view of the classical theory of factorization is a topic with a long tradition (see \cite[Sect.~2.4 and Remark 4.5]{An-Tr18} and references therein). However, the classical theory of factorization corresponds, from the point of view of the underlying philosophy of this work, to the case where $H$ is the multiplicative monoid of a ring and the preorder $\preceq$ in the foregoing discussion is the divisibility preorder on $H$ (so that $x \preceq y$ if and only if $y \in HxH$), which has no bearing on the ``non-classical factorizations'' considered herein.

\section{Preliminaries}
\label{sec:2}

Below, we collect some elementary but fundamental results that will often come into play in later sec\-tions, and we review notations and terminology used all through the paper. Further notations and terminology, if not explained when first introduced, are standard or should be clear from context.

\subsection{Generalities}\label{sec:2.1}
We denote by $\mathbb Z$ the (set of) integers, by $\mathbb{N}$ the non-negative integers, and by $\mathbb{N}^+$ the positive integers. For all $a, b \in \allowbreak \mathbb{Z} \cup \{\pm \infty\}$, we let $\llb a, b \rrb := \{x \in \mathbb{Z}\colon a \le x \le b\}$ be the \evid{discrete interval} between $a$ and $b$. Unless noted otherwise, we reserve the letters $m$ and $n$ (with or without subscripts or su\-per\-scripts) for positive integers; and the letters $i$, $j$, $k$, and $s$ for non-negative integers. 

\subsection{Monoids}
\label{sec:monoids}
We take a monoid to be a semigroup with an identity. Unless stated otherwise, monoids will typically be written multiplicatively and need not have any special property (e.g., commutativity). We refer to \cite[Ch.~1]{Ho95} for basic aspects of semigroup theory. 

Let $H$ be a monoid with identity $1_H$. 
A \evid{unit} of $H$ is an element $u \in H$ such that $uv = vu = 1_H$ for a provably unique element $v \in H$, named the \evid{inverse} of $u$ (in $H$) and denoted by $u^{-1}$. An \evid{i\-dem\-po\-tent} of $H$ is an element $e \in H$ such that $e^2 = e$. In particular, an i\-dem\-po\-tent is \emph{proper} if it is not the identity.

An element $a \in H$, on the other hand, is \evid{right-cancellative} (resp., \evid{left-cancellative}) if $xa \ne ya$ (resp., $ax \ne ay$) for all $x, y \in H$ with $x \ne y$; otherwise, $a$ is \evid{right-singular} (resp., \evid{left-singular}). Accordingly, 
$a$ is \evid{cancellative} if it is right- and left-cancellative, and is \evid{singular} if it is right- and left-singular. 

We will write $\mathcal{E}(H)$ for the subsemigroup (in fact, a submonoid) of $H$ generated by the i\-dem\-po\-tents; $H^\#$ for the set made up of the singular elements of $H$ and the identity $1_H$; and $H^\times$ for the set of \evid{units} of $H$. It is readily seen that $H^\#$ is a submonoid and $H^\times$ is a subgroup of $H$, henceforth called the \evid{singular monoid} and the \evid{group of units} of $H$, resp. Similar considerations apply to the left-singular elements of $H$: Together with the identity, they too form a submonoid of $H$, which we refer to as the \evid{monoid of left-singular elements} of $H$ but for which we introduce no special notation. (The case with right-singular elements is completely analogous.) It is straightforward that, if $1_H \ne x \in \mathcal{E}(H)$, then $x$ is \evid{singular}. For, if $x$ factors as a non-empty product $e_1 \cdots e_n$ of i\-dem\-po\-tents of $H$ and we assume without loss of generality that $e_i \ne 1_H$ for each $i \in \llb 1, n \rrb$, then $e_1 x = 1_H x = x 1_H = x e_n$ (i.e., $x$ is singular). Hence $\mathcal{E}(H) \subseteq H^\#$.

We say that $H$ is \evid{von Neumann regular} if, for each $x \in H$, there is $y \in H$ such that $x = xyx$; and \evid{Dedekind-finite} if, for all $x, y \in H$, $xy$ is a unit if and only if at least one (and hence all) of $x$, $y$, and $yx$ is a unit (equivalently, $xy = 1_H$ if and only if $yx = 1_H$). Moreover, we say an element $x \in H$ \emph{factors through} (the elements of) a set $A \subseteq H$ if $x$ can be expressed as a finite product of elements from $A$ (with the usual understanding that an empty product is equal to the identity $1_H$), and a set $X \subseteq H$ factors through (the elements of) $A$ if each element of $X$ factors through $A$.

\subsection{Rings and modules}\label{sec:2.3}
We refer the reader to \cite{An-Fu92} for basic aspects on rings and modules. Most notably, a ring will always mean an associative (commutative or non-commutative) non-zero ring with identity; and a module will mean a \emph{unitary} module, and more precisely a \emph{right} unitary module unless stated otherwise.

Let $R$ be a ring and $M$ be a module over $R$. 
We denote by $0_R$ (resp., by $1_R$) the additive (resp., multiplicative) identity of $R$, by $0_M$ the zero of $M$, and by $R_R$ the \evid{regular right module} over $R$ (i.e., $R$ viewed in the usual way as a right module over itself).  For $X, Y \subseteq M$, $v \in M$, and $A \subseteq R$, we set 
\[
X+Y := \{x+y \colon x \in X, \ y \in Y\} 
\quad\text{and}\quad
vA := \{va \colon a \in A\}.
\]
In particular, we refer to the sumset $X+Y$ as a \evid{direct sum}, and we write $X \oplus Y$ in place of $X+Y$, if $X$ and $Y$ \emph{intersect trivially}, that is, $X \cap Y = \{0_M\}$.
We use $\mathcal M_n(R)$ for the ring of $n$-by-$n$ matrices over $R$ and $\End(M)$ for the \evid{endomorphism ring} of $M$, i.e., the ring obtained by endowing the set of all (right) $R$-linear functions on $M$ with the operations of pointwise addition and functional composition (the latter playing the role of ring multiplication): The additive identity of $\End(M)$ is the func\-tion $M \to M \colon x \mapsto 0_M$, while the multiplicative identity is the identity map $\mathrm{id}_M$ on $M$.

A submodule $N$ of $M$ is \evid{in\-de\-com\-pos\-a\-ble} if $N$ is neither the zero module $\{0_M\}$ nor the direct sum of two non-zero submodules; and is a \evid{direct summand} (of $M$) if $M = N \oplus N'$ for some (not-necessarily-unique) sub\-module $N'$, herein named an \evid{ad\-di\-tive complement} of $N$ (relative to $M$). 
If $N$ is a direct summand of $M$ and $N'$ is an additive complement of $N$, we have a well-defined endo\-morphism $p \in \End(M)$, called the \evid{projection} of $M$ on $N$ along $N'$, that maps a vector $v \in M$ to the u\-nique $x \in N$ such that $v - x \in N'$.

We take the \evid{uniform} (or \evid{Goldie}) \evid{dimension} $\udim(M)$ of $M$ to be the supremum of the set of all integers $k \ge 1$ such that $X_1 \oplus \cdots \oplus X_k \subseteq M$ for some submodules $X_1, \ldots, X_k$ of $M$, with $\sup \emptyset := 0$. Accordingly, we let the \evid{right uniform dimension} of a right ideal $\mathfrak{i}$ of $R$, herein denoted by $\rdim_R(\mathfrak i)$, be the uniform dimension of $\mathfrak i$ viewed as a submodule of $R_R$.
It is seen from \cite[Definition (6.2) and Corollaries (6.6) and (6.10)]{Lam99} that the right u\-ni\-form dimension is \emph{monotone} and \emph{additive}, in the sense that
\begin{equation}\label{equ:r.dim-monotone}
\rdim_R(\mathfrak i) \leq \rdim_R(\mathfrak j), 
\qquad 
\text{for all right ideals } \mathfrak{i}, \mathfrak{j} \subseteq R \text{ with } \mathfrak i \subseteq \mathfrak j
\end{equation}
and
\begin{equation}\label{equ:2021-04-28-23:55}
\rdim_R(\mathfrak i + \mathfrak j) = \rdim_R(\mathfrak i) + \rdim_R(\mathfrak j), 
\qquad 
\text{for all right ideals } \mathfrak{i}, \mathfrak{j} \subseteq R \text{ with } \mathfrak i \cap \mathfrak j = \{0_R\}.
\end{equation}
Given $a \in R$, we write $\rann_R(a)$ for the \evid{right annihilator} and $\lann_R(a)$ for the \evid{left annihilator} of $a$, i.e., 
\[
\rann_R(a) := \{x \in R \colon ax = 0_R\}
\quad\text{and}\quad
\lann_R(a) := \{x \in R \colon xa = 0_R\}.
\]
In addition, we define
\begin{equation}\label{equ:r.fix-in-a-ring}
\rfix_R(a) := \rann_R(1_R-a) = \{x \in R \colon ax = x\}.
\end{equation}
As a matter of fact, $\rann_R(a)$ is a right ideal of $R$, and hence so also is $\rfix_R(a)$. Moreover, it is obvious that $ax = 0_R \neq x$ for every non-zero $x \in \rann_R(a)$, which shows that 
\begin{equation}\label{equ:2021-06-01-14:47}
\rann_R(a) + \rfix_R(a) = \rann_R(a) \oplus \rfix_R(a).
\end{equation}
We call $R$ a \textsf{right Rickart} ring if $\rann_R(a)$ is a direct summand of $R_R$ for every $a \in R$. As will be seen in Sect.~\ref{subsec:characterizations}, this class of rings is of utmost importance in the applications of the abstract theorems of Sect.~\ref{sec:3}. 

We say that $R$ is a \evid{von Neumann regular} (resp., \evid{Dedekind-finite}) ring if so is the multiplicative monoid $(R, \cdot\,)$ of $R$. By a unit of $R$ we always understand a \emph{multiplicative} unit, that is, a unit of $(R, \cdot\,)$. The same goes for i\-dem\-po\-tents and singular, left-singular, and right-singular elements.
Accordingly, we define $R^\# := (R, \cdot\,)^\#$ and $R^\times := (R, \cdot\,)^\times$, and we denote by $\mathcal E(R)$ the sub\-sem\-i\-group of $(R, \cdot\,)$ generated by the i\-dem\-po\-tents of $R$. Note that an element $a \in R$ is left-singular (resp., right-singular) if and only if $a$ is a right (resp., left) zero divisor of $R$. Accordingly, we will simply refer to $R^\#$ and $R^\times$, resp., as the \evid{monoid of zero divisors} and the \evid{group of units} of the ring $R$. Likewise, we will talk of the ``monoid of left (resp., right) zero divisors'' of $R$ to mean the monoid of left-singular (resp., right-singular) elements of $(R, \cdot\,)$.

In particular, we let an idempotent $e \in R$ be \textsf{coprimitive} if $\rann(e)$ is an indecomposable sub\-mod\-ule of $R_R$. The notion is left-right symmetric. It is a basic fact (see, e.g., 
\cite[p.~48]{Ja64}) that
\begin{equation}\label{equ:direct-sum-decomposition-idemp}
R = eR \oplus (1_R-e)R = Re \oplus R(1_R-e),
\end{equation}
and this implies at once that 
\begin{equation}\label{equ:ann&fix-of-idempotent}
\rann_R(e) = \rfix_R(1_R-e) = (1_R-e)R
\quad\text{and}\quad
\lann_R(e) = R(1_R-e). 
\end{equation}
Consequently, we conclude from the equivalence between conditions (1) and (1)$'$ in \cite[Proposition (21.8)]{Lam01} that $e$ is coprimitive if and only if $\lann_R(e)$ is indecomposable as a \emph{left} module over $R$.

Lastly, we let a \evid{Peirce basis} of $R$ be a non-empty, finite tuple $(e_1, \ldots, e_n)$ of pairwise or\-thog\-o\-nal i\-dem\-po\-tents of $R$ such that $1_R = e_1 + \cdots + e_n$, where $x, y \in R$ are \evid{orthogonal} if $xy = yx = 0_R$. 

The following result provides a characterization of i\-dem\-po\-tents that, while elementary, is central for this paper (note that we henceforth drop the sub\-script `$R\,$' from the notation `$\rann_R$', `$\lann_R$', and `$\rfix_R$' when the ring $R$ is clear from context).

\begin{proposition}\label{prop:caratterizzazione-idemps}
Let $R$ be a ring. The following are equivalent for an element $a \in R$:
\begin{enumerate}[label=\textup{(\alph{*})}]
\item\label{prop:caratterizzazione-idemps(a)} $a$ is an i\-dem\-po\-tent of $R$.
\item\label{prop:caratterizzazione-idemps(c)} $R = \rann(a) \oplus \rfix(a)$.
\item\label{prop:caratterizzazione-idemps(d)} $aR = \rfix(a)$.
\end{enumerate}
\end{proposition}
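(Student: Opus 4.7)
The plan is to prove the equivalences via the cycle \ref{prop:caratterizzazione-idemps(a)} $\Rightarrow$ \ref{prop:caratterizzazione-idemps(d)} $\Rightarrow$ \ref{prop:caratterizzazione-idemps(c)} $\Rightarrow$ \ref{prop:caratterizzazione-idemps(a)}, using the two display equations \eqref{equ:direct-sum-decomposition-idemp} and \eqref{equ:ann&fix-of-idempotent} as the only external inputs. The key conceptual observation is that $\rfix(a) = \{x \in R : ax = x\}$ is, by its very definition, the set of ``left-fixed points'' of $a$, so condition \ref{prop:caratterizzazione-idemps(d)} is essentially saying that $a$ acts as a projection on its image $aR$, which is exactly the idempotent condition.

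For \ref{prop:caratterizzazione-idemps(a)} $\Rightarrow$ \ref{prop:caratterizzazione-idemps(d)}, if $a^2 = a$ then for every $r \in R$ we have $a(ar) = a^2 r = ar$, giving $aR \subseteq \rfix(a)$; the reverse inclusion is immediate from the definition, since $ax = x$ forces $x = ax \in aR$. For \ref{prop:caratterizzazione-idemps(d)} $\Rightarrow$ \ref{prop:caratterizzazione-idemps(c)}, first note that $a = a \cdot 1_R \in aR = \rfix(a)$ already yields $a^2 = a$, so $a$ is idempotent; then equation \eqref{equ:direct-sum-decomposition-idemp} gives $R = aR \oplus (1_R - a)R$, and combining this with \eqref{equ:ann&fix-of-idempotent} (which identifies $\rann(a) = (1_R - a)R$) and the assumed equality $aR = \rfix(a)$ produces the desired decomposition. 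Finally, for \ref{prop:caratterizzazione-idemps(c)} $\Rightarrow$ \ref{prop:caratterizzazione-idemps(a)}, write $1_R = r + s$ with $r \in \rann(a)$ and $s \in \rfix(a)$; then
\[
a \;=\; a \cdot 1_R \;=\; ar + as \;=\; 0_R + s \;=\; s \in \rfix(a),
\]
whence $a^2 = a \cdot a = a$.

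There is essentially no obstacle: the argument is a direct unwinding of definitions, with \eqref{equ:direct-sum-decomposition-idemp} and \eqref{equ:ann&fix-of-idempotent} invoked only to convert the internal description $aR = \rfix(a)$ into the external decomposition in \ref{prop:caratterizzazione-idemps(c)}. The only place one has to be slightly careful is in \ref{prop:caratterizzazione-idemps(c)} $\Rightarrow$ \ref{prop:caratterizzazione-idemps(a)}, where the trick is to evaluate the decomposition of $R$ at $1_R$ and then left-multiply by $a$ to exploit the defining property of each summand.
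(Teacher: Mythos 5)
Your proof is correct, and it is worth noting how it differs from the paper's. The paper traverses the cycle \ref{prop:caratterizzazione-idemps(a)} $\Rightarrow$ \ref{prop:caratterizzazione-idemps(c)} $\Rightarrow$ \ref{prop:caratterizzazione-idemps(d)} $\Rightarrow$ \ref{prop:caratterizzazione-idemps(a)}: it gets \ref{prop:caratterizzazione-idemps(a)} $\Rightarrow$ \ref{prop:caratterizzazione-idemps(c)} from the explicit decomposition $x = (x-ax) + ax$ together with Eq.~\eqref{equ:2021-06-01-14:47}, and \ref{prop:caratterizzazione-idemps(c)} $\Rightarrow$ \ref{prop:caratterizzazione-idemps(d)} by the element-wise computation $ab = ax+ay = y$. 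You reverse the cycle and, for \ref{prop:caratterizzazione-idemps(d)} $\Rightarrow$ \ref{prop:caratterizzazione-idemps(c)}, outsource the direct-sum bookkeeping to the previously recorded facts \eqref{equ:direct-sum-decomposition-idemp} and \eqref{equ:ann&fix-of-idempotent} after first extracting $a^2=a$ from $a = a\cdot 1_R \in aR = \rfix(a)$ --- which means you in effect prove \ref{prop:caratterizzazione-idemps(d)} $\Rightarrow$ \ref{prop:caratterizzazione-idemps(a)} (the paper's last arc) as a sub-step; this is logically harmless, just mildly redundant. Your \ref{prop:caratterizzazione-idemps(c)} $\Rightarrow$ \ref{prop:caratterizzazione-idemps(a)}, evaluating the decomposition at $1_R$ and left-multiplying by $a$, is a clean trick the paper does not need. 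Both arguments are equally elementary; the paper's is self-contained modulo Eq.~\eqref{equ:2021-06-01-14:47}, whereas yours buys a shorter \ref{prop:caratterizzazione-idemps(d)} $\Rightarrow$ \ref{prop:caratterizzazione-idemps(c)} at the price of invoking the Peirce decomposition of an idempotent.
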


\begin{proof}
\ref{prop:caratterizzazione-idemps(a)} $\Rightarrow$ \ref{prop:caratterizzazione-idemps(c)}: 
Let $x \in R$. Since $a$ is i\-dem\-po\-tent, we have $ax = a^2x$ and hence $a(x-ax) = 0_R$; that is, $ax \in \rfix(a)$ and $x - ax \in \rann(a)$. It follows that $x = (x-ax) + ax \in \rann(a) + \rfix(a)$, which shows in turn that $R \subseteq \rann(a) + \rfix(a)$. The opposite inclusion is obvious, so by Eq.~\eqref{equ:2021-06-01-14:47} we are done.

\vskip 0.05cm

\ref{prop:caratterizzazione-idemps(c)} $\Rightarrow$ \ref{prop:caratterizzazione-idemps(d)}: Let $b \in R$. By the hypothesis that $R = \rann(a) \oplus \rfix(a)$, we can write $b = x+y$ for some $x \in \rann(a)$ and $y \in \rfix(a)$. This implies $ab = ax + ay = y \in \rfix(a)$ and hence $aR \subseteq \rfix(a)$. On the other hand, it is clear that $\rfix(a) \subseteq aR$, because $z = az$ for each $z \in \rfix(a)$. So, $aR = \rfix(a)$.

\vskip 0.05cm

\ref{prop:caratterizzazione-idemps(d)} $\Rightarrow$ \ref{prop:caratterizzazione-idemps(a)}: 
Since $aR = \rfix(a)$, we have $a \in \rfix(a)$ and hence $a^2 = a$ (namely, $a$ is i\-dem\-po\-tent).
\end{proof}

The next proposition, on the other hand, puts together a few more basic properties of idempotents that we will often come in handy in Sects.~\ref{sec:4} and \ref{subsec:characterizations}.

\begin{proposition}\label{prop:2.2}
Let $R$ be a ring and $e, f \in R$ be idempotents. The following hold:
\begin{enumerate}[label=\textup{(\roman{*})}]
\item\label{prop:2.2(i)} If $R = \mathfrak i_1 \oplus \cdots \oplus \mathfrak i_n$ for certain right ideals $\mathfrak i_1, \ldots, \mathfrak i_n \subseteq R$, then there is a uniquely determined Peirce basis $(e_1, \ldots, e_n)$ of $R$ such that $\mathfrak i_1 = e_1 R, \ldots, \mathfrak i_n = e_n R$.
\item\label{prop:2.2(ii)} $eR = \mathfrak i_1 \oplus \mathfrak i_2$, for some right ideals $\mathfrak i_1, \mathfrak i_2 \subseteq R$, if and only if there exist orthogonal i\-dem\-po\-tents $e_1, e_2 \in R$ with $e = e_1 + e_2$ such that $\mathfrak i_1 = e_1 R$ and $\mathfrak i_2 = e_2R$.
\item\label{prop:2.2(iv)} If $eR$ is contained in $fR$, then $fR = eR \oplus f_0R$ for some idempotent $f_0 \in R$.
\item\label{prop:2.2(v)} If $ef = fe = 0_R$ and $z \in eRf \cup fRe$, then $e+z$ is an idempotent of $R$.
\end{enumerate}
\end{proposition}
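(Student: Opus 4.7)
For part \ref{prop:2.2(i)}, the plan is to use the given decomposition $R = \mathfrak{i}_1 \oplus \cdots \oplus \mathfrak{i}_n$ to write $1_R = e_1 + \cdots + e_n$ with $e_k \in \mathfrak{i}_k$, and then verify that $(e_1, \ldots, e_n)$ is a Peirce basis with the required property. For any $x \in \mathfrak{i}_k$, multiplying $1_R = \sum_j e_j$ on the right by $x$ and using uniqueness of the direct sum expression of $x$ forces $e_k x = x$ and $e_j x = 0_R$ for $j \neq k$. Taking $x = e_k$ yields idempotency of $e_k$ and orthogonality of the $e_j$'s; it also shows $\mathfrak{i}_k \subseteq e_k R$, while the reverse inclusion is immediate from $e_k \in \mathfrak{i}_k$. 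Uniqueness of the Peirce basis follows from the fact that any two candidates have the same components in the direct sum $R = \mathfrak{i}_1 \oplus \cdots \oplus \mathfrak{i}_n$.

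For part \ref{prop:2.2(ii)}, the forward direction will combine \ref{prop:2.2(i)} with Eq.~\eqref{equ:direct-sum-decomposition-idemp}. Since $eR = \mathfrak{i}_1 \oplus \mathfrak{i}_2$ and $R = eR \oplus (1_R-e)R$, we get $R = \mathfrak{i}_1 \oplus \mathfrak{i}_2 \oplus (1_R-e)R$, and part \ref{prop:2.2(i)} yields a Peirce basis $(e_1, e_2, e_0)$ with $\mathfrak{i}_1 = e_1 R$, $\mathfrak{i}_2 = e_2R$, and $(1_R-e)R = e_0 R$. Using uniqueness of the direct sum decomposition of $1_R$ and the fact that $e_1 + e_2 \in eR$ and $e_0 \in (1_R-e)R$, we conclude $e = e_1+e_2$. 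For the converse, if $e = e_1 + e_2$ with $e_1, e_2$ orthogonal idempotents, a direct computation shows $ee_i = e_i$ (so $e_i R \subseteq eR$) and $er = e_1 r + e_2 r$ for every $r \in R$ (so $eR \subseteq e_1 R + e_2 R$); directness follows by multiplying $e_1 r = e_2 s$ on the left by $e_1$.

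For part \ref{prop:2.2(iv)}, the key is to show first that $eR$ is a direct summand of $fR$, and then apply \ref{prop:2.2(ii)}. Since the submodule lattice of $R_R$ is modular and $eR \subseteq fR$, applying the modular law to $R = eR \oplus (1_R - e)R$ gives
\[
fR = fR \cap \bigl(eR + (1_R-e)R\bigr) = eR + \bigl(fR \cap (1_R-e)R\bigr),
\]
and this sum is direct because $eR \cap (1_R - e)R = \{0_R\}$. Now \ref{prop:2.2(ii)} applied to $fR = eR \oplus \bigl(fR \cap (1_R-e)R\bigr)$ produces orthogonal idempotents $e', f_0$ with $f = e' + f_0$, $e'R = eR$, and $f_0 R = fR \cap (1_R-e)R$, giving the desired decomposition $fR = eR \oplus f_0R$.

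Part \ref{prop:2.2(v)} is a direct calculation using $ef = fe = 0_R$. In both subcases $z \in eRf$ or $z \in fRe$, the products $ez$, $ze$, and $z^2$ reduce to either $z$, $0_R$, or $0_R$ in a way that gives $(e+z)^2 = e^2 + ez + ze + z^2 = e + z$. No obstacle is anticipated: the only mildly delicate point is the modular-law step in \ref{prop:2.2(iv)}, which I would make explicit to avoid any lattice-theoretic hand-waving.
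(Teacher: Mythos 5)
Your proof is correct throughout, but it takes a noticeably different route from the paper's in two places. For items \ref{prop:2.2(i)} and \ref{prop:2.2(ii)} the paper simply cites Proposition 7.2 of Anderson--Fuller and Proposition (21.8) of Lam, whereas you give the standard self-contained argument (decompose $1_R$ along the direct sum and read off idempotency, orthogonality, and $\mathfrak i_k = e_kR$ from the uniqueness of components); this is sound, including the uniqueness claim and the derivation of $e = e_1+e_2$ from the uniqueness of the decomposition of $1_R$ in $eR \oplus (1_R-e)R$. The real divergence is in item \ref{prop:2.2(iv)}: the paper invokes a lemma of Maeda to produce an explicit idempotent $e_0 := ef$ with $e_0R = eR$ and $e_0 = e_0f = fe_0$, sets $f_0 := f - e_0$, and verifies by direct computation that $f_0$ is idempotent and $fR = e_0R \oplus f_0R$; you instead apply the modular law to $R = eR \oplus (1_R-e)R$ to get $fR = eR \oplus \bigl(fR \cap (1_R-e)R\bigr)$ and then feed this into item \ref{prop:2.2(ii)}. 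Both are valid: your argument is conceptually cleaner and makes the logical dependence on \ref{prop:2.2(ii)} explicit, while the paper's computation has the minor advantage of exhibiting $f_0 = f - ef$ in closed form (which is occasionally convenient downstream). Item \ref{prop:2.2(v)} is the same direct calculation in both.
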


\begin{proof}
For (the left analogue of) item \ref{prop:2.2(i)},
see Proposition 7.2 in \cite{An-Fu92},
and for item \ref{prop:2.2(ii)}, see Proposition (21.8) in \cite{Lam01} (the case $e = 0_R$ is trivial).

\vskip 0.05cm

\ref{prop:2.2(iv)}: Since $eR \subseteq fR$, we have from \cite[Lemma 4(i)]{Ma60} that there exists an idempotent $e_0 \in R$ such that $e_0R = eR$ and $e_0 = e_0f = fe_0$ (in fact, we may take $e_0 := ef$). Set $f_0 := f - e_0 = f(1_R - e_0)$. Then 
\[
f_0^2 = f^2 - fe_0 - e_0 f + e_0^2 = f - e_0 = f_0,
\]
namely, $f_0$ is idempotent. 
It thus remains to see that $fR=e_0R\oplus f_0R$. For, note first that $e_0R$ and $f_0R$ intersect trivially. Indeed, if $ x = e_0r = f_0s$ for some $r, s \in R$, then 
\[
x = e_0^2\, r = e_0 x = e_0 f_0 s = e_0 (f - e_0) s = (e_0 f - e_0^2) s = (e_0 - e_0)s = 0_R. 
\]
Moreover, $e_0R + f_0R = fe_0R\oplus f(1_R-e_0)R\subseteq fR$, and on the other hand, $fR \subseteq e_0R + f_0R$ because $f = e_0+f_0$. So putting it all together, we find $fR = e_0R \oplus f_0R$ (as wished).

\vskip 0.05cm

\ref{prop:2.2(v)}: If $z \in eRf \cup fRe$ and $ef = fe = 0_R$, then it is clear that $z^2 = 0_R$ and $ez + ze = z$, which shows in turn that $(e+z)^2 = e^2 + ez + ze + z^2 = e+z$ (i.e., $z$ is an idempotent of $R$).
\end{proof}

\section{Preorders and abstract factorization theorems}
\label{sec:3}

To start with, we recall that a \evid{preorder} on a set $X$ is a reflexive and transitive relation $\preceq$ on $X$. In particular, we write $x \prec y$ to mean that $x \preceq y$ and $y \not\preceq x$, and we recall from the introduction that $\preceq$ is an \evid{artinian} preorder if there is no sequence $(x_k)_{k \ge 0}$ of elements of $X$ with $x_{k+1} \prec x_k$ for each $k \in \mathbb{N}$. Here, we are mostly interested in the interplay between preorders and monoids. This leads to the following.

\begin{definition}\label{def:2021-06-01-23:10}
Let $\preceq$ be a preorder on a monoid $H$. An element $u \in H$ is a \evid{$\preceq$-unit} (of $H$) if $u \preceq 1_H \preceq u$, and is a \evid{$\preceq$-non-unit} if it is not a $\preceq$-unit. 
Accordingly, we say that a $\preceq$-non-unit $a \in H$ is 
\begin{itemize}
\item a \textsf{$\preceq$-irreducible of degree $s$} or {\sf degree-$s$ $\preceq$-irreducible} (of $H$), for some integer $s \ge 2$, if for every $k \in \llb 2, s \rrb$ there exist no $\preceq$-non-units $b_1, \ldots, b_k$ with $b_1 \prec a, \ldots, b_k \prec a$ such that $a = b_1 \cdots b_k$;
\item a \evid{$\preceq$-quark} (of $H$) if there does not exist any $\preceq$-non-unit $b$ with $b \prec a$. 
\end{itemize}
In particular, we will simply refer to a $\preceq$-irreducible of degree $2$ as a {\sf $\preceq$-irreducible} (of $H$); and say that $H$ is \evid{$\preceq$-factorable} if every $\preceq$-non-unit of $H$ factors as a product of $\preceq$-irreducibles.
\end{definition}

It is straightforward that, if $\preceq$ is a preorder on a monoid $H$, then a $\preceq$-irreducible of degree $s \ge 2$ is also a $\preceq$-irreducible of degree $k$ for every $k \in \llb 2, s \rrb$, and each $\preceq$-quark is a degree-$s$ $\preceq$-irreducible  for every $s \ge 2$ (for a partial converse to this latter statement, see Theorem \ref{thm:3.5} below).

\begin{definition}\label{def:2021-06-01-23:16}
Given a preorder $\preceq$ on a monoid $H$ and an element $x \in H$, we denote by $\hgt_\preceq^H(x)$ the supremum of the set of all integers $n \ge 1$ for which there are $\preceq$-non-units $x_1, \ldots, x_n \in H$ with $x_1 = x$ and $x_{k+1} \prec x_k$ for each $k \in \llb 1, n-1 \rrb$, with $\sup \emptyset := 0$. We call $\hgt_\preceq^H(x)$ the \evid{$\preceq$-height} of $x$ (relative to $H$); and we say $\preceq$ is a \evid{strongly artinian} preorder (on $H$), or $H$ is a \evid{strongly $\preceq$-artinian} monoid, if the $\preceq$-height of every element is finite. We will refer to the map $H \to \mathbb{N} \cup \{\infty\} \colon y \mapsto \hgt_\preceq^H(y)$ as the \evid{$\preceq$-height} (\evid{function}) of $H$, and write $\hgt(x)$ in place of $\hgt_{\preceq}^H(x)$ if no confusion can arise.
\end{definition}

The following observations, although elementary, will prove useful in several occasions.

\begin{remarks}\label{rem:height}
\begin{enumerate*}[label=\textup{(\arabic{*})}]
\item\label{rem:height(1)} Let $K$ be a submonoid of a monoid $H$ and $\preceq_K$ be the restriction to $K$ of a preorder $\preceq$ on $H$. If $1_H$ is the only $\preceq$-unit of $H$, then $1_H$ is also the only $\preceq_K$-unit of $K$ (recall that $1_K = 1_H$). In particular, the $\preceq_K$-height of an element $a \in K$ is no larger than the $\preceq$-height of $a$ (relative to $H$), which shows in turn that, if $H$ is strongly $\preceq$-artinian, then $K$ is strongly $\preceq_K$-artinian. \\[0.05cm]

\indent{}\item\label{rem:height(2)} Let $\preceq$ be a preorder on a monoid $H$, and for a fixed $a \in H$ denote by $\Lambda(a)$ the set of all integers $n \ge 1$ for which there exist $\preceq$-non-units $x_1, \ldots, x_n \in H$ with $x_1 = a$ ed $x_{k+1} \prec x_k$ for each $k \in \llb 1, n-1 \rrb$ (so, if $a$ is a $\preceq$-unit, then $\Lambda(a) = \emptyset$). It is immediate that $\Lambda(a) = \llb 1, \hgt(a) \rrb$. For, note that, if an integer $n$ is in $\llb 1, \hgt(a) \rrb$ but not in $\Lambda(a)$, then $\Lambda(a) \cap \llb n, \infty \rrb = \emptyset$ and hence $\hgt(a) = \sup \Lambda(a) \le n-1 < \hgt(a)$ (which is absurd).
\end{enumerate*}
\end{remarks}

With the exception of $\preceq$-irreducibles of degree $s \ge 3$, the notions introduced in Definitions \ref{def:2021-06-01-23:10} and \ref{def:2021-06-01-23:16} were first considered in \cite[Sect.~3]{Tr20(c)}.
Their significance is linked to the wide range of applications of the next results, which generalize Theorem 3.10 and Proposition 3.14 of \cite{Tr20(c)} from the case where $s = 2$.

\begin{theorem}\label{thm:3.4}
Let $\preceq$ be an artinian preorder on a monoid $H$ and $s$ be an integer $\ge 2$. Then every $\preceq$-non-unit $x \in H$ factors as a non-empty, finite product of $s^{\hgt(x)-1}$ or fewer $\preceq$-irreducibles of degree $s$.
\end{theorem}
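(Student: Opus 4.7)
The plan is to proceed by well-founded induction on the strict part $\prec$ of $\preceq$, which is legitimate because $\preceq$ is artinian. It will be convenient to prove the existence of the factorization and the bound on its length simultaneously. As a preliminary (and repeatedly used) observation, I would note that, for every pair $(y, z)$ of $\preceq$-non-units with $y \prec z$, one has $\hgt(y) \le \hgt(z) - 1$: indeed, if $(y_1, \ldots, y_m)$ witnesses $\hgt(y) \ge m$ with $y_1 = y$, then prepending $z$ yields a strictly $\preceq$-decreasing sequence $(z, y_1, \ldots, y_m)$ of $\preceq$-non-units starting from $z$, so that $\hgt(z) \ge m+1$ by Remark \ref{rem:height}\ref{rem:height(2)}.

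Fix a $\preceq$-non-unit $x \in H$. As a base case, suppose $\hgt(x) = 1$ (equivalently, every $\preceq$-non-unit is $\not\prec x$): then $x$ is a $\preceq$-quark and hence a $\preceq$-irreducible of degree $s$, so $x$ is its own factorization of length $1 = s^{\hgt(x)-1}$. For the inductive step, assume $\hgt(x) \ge 2$ (including the case $\hgt(x) = \infty$, in which the bound $s^{\hgt(x)-1}$ is vacuous) and that the claim already holds for every $\preceq$-non-unit $y$ with $y \prec x$. If $x$ is itself a $\preceq$-irreducible of degree $s$, there is nothing left to prove, since $x$ is a length-$1$ factorization and $1 \le s^{\hgt(x)-1}$. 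Otherwise, by Definition \ref{def:2021-06-01-23:10} there exist an integer $k \in \llb 2, s \rrb$ and $\preceq$-non-units $b_1, \ldots, b_k$ with $b_i \prec x$ for every $i$ and $x = b_1 \cdots b_k$.

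Applying the inductive hypothesis to each $b_i$ (which is legitimate as $b_i \prec x$), every $b_i$ factors as a product of at most $s^{\hgt(b_i)-1}$ $\preceq$-irreducibles of degree $s$. Concatenating these factorizations expresses $x$ as a product of $\preceq$-irreducibles of degree $s$ whose total length is at most
\[
\sum_{i=1}^{k} s^{\hgt(b_i)-1} \,\le\, k \cdot s^{\hgt(x)-2} \,\le\, s \cdot s^{\hgt(x)-2} \,=\, s^{\hgt(x)-1},
\]
where the first inequality uses the preliminary observation $\hgt(b_i) \le \hgt(x) - 1$ (together with the monotonicity of $n \mapsto s^n$) and the second uses $k \le s$. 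This completes the induction.

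The argument is essentially routine once the preliminary height-drop lemma is in hand; the only point requiring a bit of care is that $\hgt(x)$ may be $+\infty$ even when $\preceq$ is merely artinian (not strongly artinian), but in that regime the asserted bound $s^{\hgt(x)-1}$ is vacuous, and well-founded induction on $\prec$ (rather than ordinary induction on a natural-number-valued height) still yields the existence of a finite factorization. The only real choice to be made is at the inductive step, namely how to split $x$ into $k \le s$ strictly $\prec$-smaller pieces; this choice is guaranteed precisely by the negation of the definition of a $\preceq$-irreducible of degree $s$, which is what drives the factor $s$ in the exponent of the bound.
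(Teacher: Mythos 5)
Your proof is correct and follows essentially the same route as the paper's: the negation of degree-$s$ irreducibility yields a split into $k \le s$ strictly $\prec$-smaller $\preceq$-non-units, the height of each factor drops by at least one, and the estimate $\sum_{i} s^{\hgt(b_i)-1} \le k\, s^{\hgt(x)-2} \le s^{\hgt(x)-1}$ closes the argument. The only difference is organizational: the paper first proves existence via a $\preceq$-minimal element of the set of non-factorable $\preceq$-non-units and then bounds the length by ordinary induction on the (finite) height, whereas you fold both into a single well-founded induction on $\prec$, which is legitimate since artinianity is precisely the well-foundedness of $\prec$.
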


\begin{proof}
We divide the proof into two parts: In \textsc{Part 1}, we prove that every $\preceq$-non-unit $x \in H$ factors as a finite product of $\preceq$-irreducibles of degree $s$; and in \textsc{Part 2}, we establish that the minimum length of such a fac\-tor\-i\-za\-tion is no larger than $s^{\hgt(x)-1}$.

\vskip 0.05cm 

\textsc{Part 1:} Let $\Omega$ be the set of $\preceq$-non-units of $H$ that do not factor as a finite product of $\preceq$-irreducibles of degree $s$, and suppose for the sake of a contradiction that $\Omega$ is non-empty. Since $\preceq$ is an artinian preorder, it is then a routine matter to show that $\Omega$ has a $\preceq$-minimal element $\bar{x}$, meaning that there exists no $x \in \Omega$ with $x \prec \bar{x}$ (see \cite[Remark 3.9(3)]{Tr20(c)} for further details). Clearly, $\bar{x}$ is a $\preceq$-non-unit but not a $\preceq$-irreducible of degree $s$. So, there are $k \in \llbracket 2, s \rrbracket$ and $\preceq$-non-units $x_1, \ldots, x_k \in H$ such that $x_i \prec \bar{x}$ for every $i \in \llb 1, k \rrb$ and $\bar{x} = x_1 \cdots x_k$. On the other hand, the $\preceq$-minimality of $\bar{x}$ as an element of $\Omega$ yields $x_1, \ldots, x_k \notin \Omega$. Thus, each of $x_1, \ldots, x_k$ factors as a product of $\preceq$-irreducibles of degree $s$, which however implies that the same holds for the product $x_1 \cdots x_k$ and hence contradicts that $x_1 \cdots x_k = \bar{x} \in \Omega$.

\vskip 0.05cm 

\textsc{Part 2:} Pick a $\preceq$-non-unit $x \in H$, denote by $\ell(x)$ the minimum length of a fac\-tor\-i\-za\-tion of $x$ into $\preceq$-irreducibles of degree $s$ (observe that, by the first part of the proof, $\ell(x)$ is a well-defined positive integer), and set $N := \hgt(x)$. We need to prove that $\ell(x) \le s^{N-1}$. 
If $N = \infty$, the con\-clu\-sion is immediate. So, suppose $1 \le N < \infty$ (note that the $\preceq$-height of an element $u \in H$ is zero if and only if $u$ is a $\preceq$-unit). We proceed by induction on $N$.

If $N = 1$, then $x$ is a $\preceq$-quark and hence $\ell(x) = 1 = s^{N-1}$, since a $\preceq$-quark is a degree-$t$ $\preceq$-irreducible for every integer $t \ge 2$. Thus, let $N \ge 2$ and assume inductively that every $\preceq$-non-unit of $\preceq$-height $h \le N-1$ factors as a product of $s^{h-1}$ or fewer $\preceq$-irreducibles of degree $s$. If $x$ is a $\preceq$-irreducible of degree $s$, then $\ell(x) = 1 \le s^{N-1}$ and we are done. Otherwise, there are $k \in \llb 2, s \rrb$ and $\preceq$-non-units $x_1, \ldots, x_k\in H$ such that $x=x_1\cdots x_k$ and $x_i\prec x$ for every $i \in \llb 2, k \rrb$. In particular, we have $h_i := \hgt(x_i) \le N-1$ for each $i \in \llb 1, k \rrb$, which, by the inductive hypothesis, tells us that $x_i$ is a product of $\ell_i$ degree-$s$ $\preceq$-irreducibles for some $\ell_i \in \llb 1, s^{h_i-1} \rrb$. Consequently, $\ell(x) \le \ell_1 + \cdots + \ell_k \leq k s^{N-2}\leq s^{N-1}$, as wished.
\end{proof}

The upper bound derived in Theorem \ref{thm:3.4} for the minimum length of a factorization of a fixed element of $H$ into a product of $\preceq$-irreducibles of degree $s$ is rather weak and can be greatly improved under further assumptions on the preorder $\preceq$, as in the next result.

\begin{theorem}\label{thm:3.5}
Let $\preceq$ be a strongly artinian preorder on a monoid $H$ and $s$ be an integer $\ge 2$ such that, for every $x \in H$ that
is neither a $\preceq$-unit nor a $\preceq$-quark, there exists $k \in \llb 2, s \rrb$ such that $x =y_1 \cdots y_k$ for some  $\preceq$-non-units $y_1, \ldots, y_k \in H$, with the additional property that $y_i\preceq x$ for each $i \in \llb 1, k \rrb$ and $\hgt(y_1) +  \allowbreak \cdots + \hgt(y_k) \leq \allowbreak \hgt(x) + k - 2$. Then the following hold:
\begin{enumerate}[label=\textup{(\roman{*})}]
\item\label{thm:3.5(i)} The preorder $\preceq$ is artinian, the $\preceq$-non-units of $H$ factor through the $\preceq$-irreducibles of degree $s$, and every $\preceq$-irreducible of degree $s$ is a $\preceq$-quark.
\item\label{thm:3.5(ii)} Each $\preceq$-non-unit $x\in H$ is a product of $(s-1)\hgt(x)-(s-2)$ or fewer $\preceq$-quarks.
\end{enumerate}
\end{theorem}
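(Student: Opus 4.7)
The plan is to deduce (ii) from (i) by induction on $\hgt(x)$, so I would first focus on establishing (i). A preliminary step is to verify that a strongly artinian preorder is automatically artinian: given any hypothetical infinite strictly descending sequence $x_1 \succ x_2 \succ \cdots$ in $H$, at most one term can be a $\preceq$-unit (because any two $\preceq$-units are mutually $\preceq$-equivalent to $1_H$, and a chain cannot strictly drop between such equivalent elements). Thus some tail of the sequence consists entirely of $\preceq$-non-units, witnessing an infinite $\preceq$-height for its first element and contradicting strong artinianness. Theorem \ref{thm:3.4} then delivers the factorization of each $\preceq$-non-unit through degree-$s$ $\preceq$-irreducibles.

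The substantive part of (i) is that every degree-$s$ $\preceq$-irreducible $a \in H$ is a $\preceq$-quark. Arguing by contradiction, suppose $a$ is a $\preceq$-non-unit that is not a $\preceq$-quark. The standing hypothesis supplies $k \in \llb 2, s \rrb$ and $\preceq$-non-units $y_1, \ldots, y_k$ with $a = y_1 \cdots y_k$, $y_i \preceq a$, and $\hgt(y_1) + \cdots + \hgt(y_k) \leq \hgt(a) + k - 2$. Since each $y_j$ is a $\preceq$-non-unit, $\hgt(y_j) \geq 1$, so separating one index from the sum forces $\hgt(y_i) \leq \hgt(a) - 1$ for every $i$. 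The key auxiliary observation — in my view the main technical point of the proof — is that two $\preceq$-equivalent $\preceq$-non-units of $H$ must have the same $\preceq$-height, obtained by splicing the larger element at the top of any strictly descending chain of $\preceq$-non-units starting at the smaller one and using transitivity to verify that the spliced chain is still strict. Combined with $y_i \preceq a$ and $\hgt(y_i) < \hgt(a)$, this rules out $a \preceq y_i$, so $y_i \prec a$; but then $a = y_1 \cdots y_k$ contradicts the degree-$s$ $\preceq$-irreducibility of $a$.

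For (ii), I would induct on $N := \hgt(x) \in \mathbf N^+$, which is finite by strong artinianness and at least $1$ since $x$ is a $\preceq$-non-unit. In the base case $N = 1$, the element $x$ is a $\preceq$-quark (a $\preceq$-non-unit has height $1$ if and only if it is a $\preceq$-quark), and the trivial one-term factorization has length $1 = (s-1)\cdot 1 - (s-2)$. For the inductive step $N \geq 2$, the element $x$ is neither a $\preceq$-unit nor a $\preceq$-quark, so the hypothesis gives $x = y_1 \cdots y_k$ with $k \in \llb 2, s \rrb$, each $y_i$ a $\preceq$-non-unit, and $\hgt(y_1) + \cdots + \hgt(y_k) \leq N + k - 2$. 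As in the previous paragraph this forces $\hgt(y_i) \leq N - 1$ for every $i$, so the inductive hypothesis factors each $y_i$ into at most $(s-1)\hgt(y_i) - (s-2)$ $\preceq$-quarks. Concatenating these factorizations exhibits $x$ as a product of at most $(s-1)(N+k-2) - k(s-2) = (s-1)N + k - 2(s-1)$ $\preceq$-quarks, which is $\leq (s-1)N - (s-2)$ precisely because $k \leq s$. Beyond the height-invariance-under-equivalence observation flagged above, the rest of the argument is straightforward bookkeeping with the height inequality supplied by the hypothesis.
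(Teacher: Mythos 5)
Your proposal is correct and follows essentially the same route as the paper: artinianity via finiteness of heights, Theorem \ref{thm:3.4} for existence, the height inequality $\hgt(y_i)\le\hgt(x)-1$ to force $y_i\prec x$ in part (i), and the same induction with the bound $(s-1)(N+k-2)-k(s-2)\le(s-1)N-(s-2)$ in part (ii). The only difference is cosmetic: you make explicit the observation that $\preceq$-equivalent $\preceq$-non-units have equal height, which the paper uses implicitly in the step ``if $x\preceq y_j$ then $\hgt(y_j)=\hgt(x)$''.
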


\begin{proof}
\ref{thm:3.5(i)}: By the assumption that $H$ is strongly $\preceq$-artinian, it is clear that the function $\lambda \colon H \to \mathbb{N} \colon x \mapsto \hgt(x)$ is well defined. On the other hand, it is obvious from the definition of the $\preceq$-height that $\lambda(x) < \lambda(y)$ whenever $x \prec y$. Consequently, we see that $\preceq$ is an artinian preorder, or else there would exist a sequence $(N_k)_{k\ge 0}$ of non-negative integers such that $N_{k+1} < N_k$ for each $k \in \mathbb{N}$ (a contradiction). We thus get from Theorem \ref{thm:3.4} that every $\preceq$-non-unit of $H$ factors as a (finite) product of $\preceq$-irreducibles of degree $s$, and it only remains to show that each $\preceq$-irreducible of degree $s$ is in fact a $\preceq$-quark. 

Let $x \in H$ be neither a $\preceq$-unit nor a $\preceq$-quark; we need to check that $x$ is not a $\preceq$-irreducible of degree $s$. By hypothesis, there exist $k \in \llb 2,s \rrb$ and $\preceq$-non-units $y_1, \ldots, y_k \in H$ with $y_1 \preceq x, \ldots, y_k \preceq x$ such that $
x = y_1 \cdots y_k$ and $\hgt(y_1) + \cdots + \hgt(y_k) \leq \hgt(x)+k-2$.
If $x\preceq y_j$ for some $j$, then $\hgt(y_j)=\hgt(x)$ and 
\[
k-1 \leq (\hgt(y_1) + \cdots + \hgt(y_k)) - \hgt(y_j) \leq \hgt(x) - \hgt(y_j) + k-2 = k-2,
\]
which is of course impossible. (Recall that the $\preceq$-height of an element $u \in H$ is zero if and only if $u$ is a $\preceq$-unit.)  Therefore, $y_1 \prec x, \ldots, y_k \prec x$ and $x$ is not a $\preceq$-irreducible of degree $s$ (as wished).

\vskip 0.05cm

\ref{thm:3.5(ii)}: Fix a $\preceq$-non-unit $x \in H$, set $N := \hgt(x)$, and denote by $q(y)$ the minimum length of a fac\-tor\-i\-za\-tion of a $\preceq$-non-unit $y \in H$ into $\preceq$-quarks (note that, by item \ref{thm:3.5(i)}, $q(x)$ is a well-defined positive integer and $1 \le N < \infty$). We need to prove $q(x) \le (s-1)N - (s-2)$, and we proceed by induction on $N$.

If $N = 1$, then $x$ is a $\preceq$-quark and the conclusion is trivial. So, assume $N \ge 2$ and suppose inductively that $q(y) \le (s-1)\hgt(y)-(s-2)$ for every $\preceq$-non-unit $y \in H$ of $\preceq$-height $\le N-1$. Since $x$ is neither a $\preceq$-unit nor a $\preceq$-quark, there exists $k \in \llb 1, s \rrb$ such that $x = y_1 \cdots y_k$ for some $\preceq$-non-units $y_1, \ldots, y_k \in H$ with $y_1 \preceq x, \ldots, y_k \preceq x$ and $\hgt(y_1) + \cdots + \hgt(y_k) \leq N+k-2$.

As in the proof of item \ref{thm:3.5(i)}, it follows that $1 \le \hgt(y_i) \le N-1$ for each $i \in \llb 1,k \rrb$, which, by the inductive hypothesis, implies that $y_i$ factors as a product of $(s-1) \hgt(y_i) - (s-2)$ or fewer $\preceq$-quarks. Thus
\[q(x) \leq \sum_{i=1}^k q(y_i) \le \sum_{i=1}^k \bigl((s-1)\hgt(y_i) -(s-2)\bigr) = (s-1)\sum_{i=1}^k \hgt(y_i) - k(s-2),\]
which, using that $k \leq s$, yields $q(x) \le (s-1)(N+k-2)-k(s-2) \le (s-1)N-(s-2)$, as wished.
\end{proof}
While the artinianity of a preorder $\preceq$ on a monoid $H$ is sufficient for each $\preceq$-non-unit of $H$ to factor as a product of $\preceq$-irreducibles (Theorem \ref{thm:3.4}), the same condition is far from being necessary. For, let $H$ be the multiplicative monoid of the non-zero elements of the integral domain constructed by A.~Grams in \cite[Sect.~1]{Gra74} and $\preceq$ be the divisibility preorder on $H$ (see the comments at the end of \cite[Sect.~3]{Tr20(c)} for further details). This raises the question of whether the previous results can be further generalized to the point of proving a (non-trivial) characterization of when the monoid $H$ is $\preceq$-factorable. Moreover, it points out how the characterization of $\preceq$-irreducibles (of a fixed degree) and $\preceq$-quarks is an interesting problem in its own right, independent of the artinianity of $\preceq$.
\subsection{The r.\textsc{F}ix-preorder}
\label{sec:3.1}
Theorems \ref{thm:3.4} and \ref{thm:3.5} lie at the heart of our approach to the study of i\-dem\-po\-tent factorizations. Another essential ingredient in this direction is provided by the following.

\begin{definition}\label{def:r.fix-preorder}
Given a monoid $H$ and an element $a \in H$, we set $\rfix_H(a) := \{x \in H \colon ax = x\}$ and let the \evid{$\rfix$-preorder on $H$} be the binary relation $\preceq$ on $H$ defined by $b \preceq c$ if and only if $\rfix_H(c) \subseteq \rfix_H(b)$.
\end{definition}

In the language of \cite[Example 3.3]{Tr20(c)}, the $\rfix$-preorder on a monoid $H$ is the dual of the pullback of the inclusion order on the power set $\mathcal P(H)$ of $H$ through the function $\phi \colon H \to \mathcal P(H) \colon a \mapsto \rfix_H(a)$; in particular, it is a preorder (as suggested by the name). 

\begin{definition}
A submonoid $K$ of a monoid $H$ is $\rfix_H$\evid{-artinian} (resp., \evid{strongly} $\rfix_H$\evid{-artinian}) if the restriction $\preceq_K$ to $K$ of the $\rfix$-preorder on $H$ is artinian (resp., strongly artinian). Moreover, we refer to an element $u \in K$ as an $\rfix_H$\evid{-unit} (resp., $\rfix_H$\evid{-non-unit}) of $K$ if $u$ is a $\preceq_K$-unit (resp., a $\preceq_K$-non-unit). We set a degree-$s$ $\rfix_H$\evid{-irreducible} (resp., an $\rfix_H$\evid{-quark}) of $K$ to be a degree-$s$ $\preceq_K$-irreducible (resp., a $\preceq_K$-quark) of $K$, and call the $\preceq_K$-height of $K$ the \evid{$\rfix_R$-height} of $K$ (see Definition~\ref{def:2021-06-01-23:16}).
\end{definition}

Note that, given a monoid $H$ and a submonoid $K$ of $H$, the restriction to $K$ of the $\rfix$-preorder on $H$ is not, in general, the $\rfix$-preorder on $K$ (the idea is that, in many situations, we can exploit the preorder defined on the larger monoid to probe the smaller one).

\begin{corollary}\label{cor:2021-06-02-15:38}
Let $H$ be a monoid and $K$ be an $\rfix_H$-artinian submonoid of $H$. Then each $a \in K$ factors as a finite product of degree-$s$ $\rfix_H$-irreducibles of $K$, for every integer $s \ge 2$.
\end{corollary}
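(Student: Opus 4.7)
The corollary is essentially a direct specialization of Theorem \ref{thm:3.4} once we correctly package $K$ as a monoid equipped with the right preorder. My plan is as follows.

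First, let $\preceq_K$ denote the restriction to $K$ of the $\rfix$-preorder on $H$, so that for $b, c \in K$ we have $b \preceq_K c$ if and only if $\rfix_H(c) \subseteq \rfix_H(b)$. Since $\preceq_K$ is still reflexive and transitive, it is a preorder on the monoid $K$. By the hypothesis that $K$ is $\rfix_H$-artinian, $\preceq_K$ is an artinian preorder on $K$ in the sense recalled at the start of Section \ref{sec:3}, and by definition the degree-$s$ $\preceq_K$-irreducibles of $K$ coincide with the degree-$s$ $\rfix_H$-irreducibles of $K$.

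Next, I want to apply Theorem \ref{thm:3.4} to the pair $(K, \preceq_K)$. The theorem yields, for every integer $s \ge 2$, a factorization of each $\preceq_K$-non-unit of $K$ as a non-empty finite product of degree-$s$ $\preceq_K$-irreducibles (and in particular as a product of degree-$s$ $\rfix_H$-irreducibles of $K$). The only remaining case is $a \in K$ a $\preceq_K$-unit; to dispose of it, observe that $\rfix_H(1_H) = \{x \in H : 1_H x = x\} = H$, so $u \preceq_K 1_H \preceq_K u$ forces $\rfix_H(u) = H$, whence (taking $x = 1_H$) $u = 1_H = 1_K$. Thus the identity is the unique $\preceq_K$-unit of $K$, and it is the empty product of degree-$s$ $\rfix_H$-irreducibles (per the convention recorded in Section \ref{sec:monoids}).

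There is no genuine obstacle here beyond verifying that the definitions align: the statement of the corollary was engineered to make Theorem \ref{thm:3.4} directly applicable to $(K, \preceq_K)$, so the argument amounts to unwinding the terminology of Definition \ref{def:r.fix-preorder} and the subsequent definition of $\rfix_H$-unit and degree-$s$ $\rfix_H$-irreducible. The one potentially subtle point worth flagging explicitly is that $\preceq_K$ is defined via $\rfix_H$ rather than $\rfix_K$, but this is harmless because Theorem \ref{thm:3.4} only requires that $\preceq_K$ be an artinian preorder on the ambient monoid $K$, which is precisely the content of the $\rfix_H$-artinianity hypothesis.
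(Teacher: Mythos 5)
Your proposal is correct and follows essentially the same route as the paper: identify that $\rfix_H(1_H) = H$ forces the identity to be the unique $\rfix_H$-unit of $K$, apply Theorem \ref{thm:3.4} to $K$ with the restricted preorder, and dispose of the identity as an empty product. No gaps.
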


\begin{proof}
It is clear that $\rfix_H(x) = H$, for some $x \in K$, if and only if $x = 1_K = 1_H$. Consequently, $1_H$ is the only $\rfix_H$-unit of $K$ and the conclusion follows at once from Theorem \ref{thm:3.4} (applied to $K$), upon considering that $1_H$ is an empty (and hence finite) product of elements from any subset of $H$.
\end{proof}

The next proposition puts together some basic properties of the $\rfix$-preorder that, in addition to being of independent interest, will be useful to deal with endomorphism rings in Sect.~\ref{sec:5.2-endo-rings}.

\begin{proposition}\label{prop:conjugation}
Let $H$ be a submonoid of the multiplicative monoid of a ring $R$ such that $R^\times H R^\times = H$ and let $\preceq$ be the $\rfix_R$-preorder on $H$. Given $a \in R$ and $u \in R^\times$, the following hold:

\begin{enumerate}[label=\textup{(\roman{*})}]
\item\label{prop:conjugation(i)} $\rann_R(u^{-1} au) = u^{-1} \rann_R(a) \, u$ and hence $\rfix_R(u^{-1} a u) = u^{-1} \rfix_R(a) \, u$.
\item\label{prop:conjugation(ii)} $a$ is a left zero divisor of $R$ if and only if so is $u^{-1} a u$.
\item\label{prop:conjugation(iii)} $a$ is an idempotent \textup{(}resp., a coprimitive idempotent\textup{)} of $R$ if and only if so is $u^{-1} a u$.
\item\label{prop:conjugation(iv)} $a$ is an $\rfix_R$-irreducible of $H$ of some degree $s \ge 2$ if and only so is $u^{-1} a u$.
\end{enumerate}
\end{proposition}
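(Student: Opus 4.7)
The plan is to concentrate the bulk of the work in part \ref{prop:conjugation(i)}; the remaining three items will then follow by essentially formal arguments, with the hypothesis $R^\times H R^\times = H$ entering only in part \ref{prop:conjugation(iv)} to ensure that conjugation by $u$ maps $H$ to itself.

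For \ref{prop:conjugation(i)}, I would unfold the definition: $x \in \rann_R(u^{-1}au)$ iff $aux = 0_R$ iff $ux \in \rann_R(a)$, giving $\rann_R(u^{-1}au) = u^{-1}\rann_R(a)$. To match the stated form $u^{-1}\rann_R(a)u$, I would use that $\rann_R(a)$ is a right ideal absorbing the unit $u$: the chain $\rann_R(a)\,u \subseteq \rann_R(a) = (\rann_R(a)\,u)u^{-1} \subseteq \rann_R(a)\,u$ shows $\rann_R(a)\,u = \rann_R(a)$. The identity for $\rfix_R$ then follows at once from $1_R - u^{-1}au = u^{-1}(1_R - a)u$. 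Part \ref{prop:conjugation(ii)} is immediate afterwards: $a$ is a left zero divisor iff $\rann_R(a) \ne \{0_R\}$, and this property is manifestly preserved under the bijection $X \mapsto u^{-1}Xu$. For \ref{prop:conjugation(iii)}, the idempotent half is $(u^{-1}au)^2 = u^{-1}a^2 u$; for the coprimitivity half, I would exhibit an explicit isomorphism of right $R$-modules $\rann_R(a) \to \rann_R(u^{-1}au)$ via $x \mapsto u^{-1}x$, which is right-$R$-linear because $u^{-1}(xr) = (u^{-1}x)r$ and bijective since $u \in R^\times$, using $\rann_R(u^{-1}au) = u^{-1}\rann_R(a)$ from part \ref{prop:conjugation(i)}.

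For \ref{prop:conjugation(iv)}, the key observation is that, by part \ref{prop:conjugation(i)}, for all $b, c \in H$ we have $b \preceq c$ iff $u^{-1}\rfix_R(c)u \subseteq u^{-1}\rfix_R(b)u$ iff $u^{-1}bu \preceq u^{-1}cu$; so conjugation by $u$ preserves $\preceq$, the strict part $\prec$, and the property of being a $\preceq$-unit or $\preceq$-non-unit (since $\rfix_R(1_H) = R$ is preserved). The hypothesis $R^\times H R^\times = H$ forces $u^{-1}Hu \subseteq H$ and, applied to $u^{-1}$, also $uHu^{-1} \subseteq H$, so conjugation is a bijection of $H$ onto itself. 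Any ``bad'' factorization $u^{-1}au = c_1 \cdots c_k$ with $c_i \in H$ and $c_i \prec u^{-1}au$ then conjugates to $a = (uc_1 u^{-1}) \cdots (uc_k u^{-1})$ with each factor in $H$ and strictly $\preceq$-below $a$; the symmetric argument applies, yielding the equivalence for $\rfix_R$-irreducibles of any degree $s \ge 2$.

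The only subtlety I anticipate is the asymmetry between left and right multiplication by $u$: the identity $\rann_R(u^{-1}au) = u^{-1}\rann_R(a)u$ depends on the right-ideal identity $\rann_R(a)\,u = \rann_R(a)$, and the coprimitivity step in \ref{prop:conjugation(iii)} needs the left-multiplication map $x \mapsto u^{-1}x$ (rather than a naive two-sided ``conjugation'' $x \mapsto u^{-1}xu$, which is not right-$R$-linear) in order to produce a genuine isomorphism of right $R$-modules.
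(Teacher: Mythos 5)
Your proposal is correct, and its overall skeleton matches the paper's: do the real work in item \ref{prop:conjugation(i)} and let the other items follow formally, with $R^\times H R^\times = H$ used only in \ref{prop:conjugation(iv)} to conjugate a ``bad'' factorization of $u^{-1}au$ back into one of $a$ (the paper's proof of \ref{prop:conjugation(iv)} is essentially identical to yours). Two sub-arguments differ in a way worth noting. For \ref{prop:conjugation(i)}, the paper proves only the inclusion $u^{-1}\rann_R(a)\,u \subseteq \rann_R(u^{-1}au)$ and gets the reverse one for free from the symmetry $a = u(u^{-1}au)u^{-1}$, whereas you compute $\rann_R(u^{-1}au) = u^{-1}\rann_R(a)$ directly and then invoke $\rann_R(a)\,u = \rann_R(a)$; that identity is true, but your displayed chain justifies it circularly in its last step --- the clean argument is $\rann_R(a) = (\rann_R(a)\,u^{-1})\,u \subseteq \rann_R(a)\,u$, using that $\rann_R(a)$ is a right ideal. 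For the coprimitivity claim in \ref{prop:conjugation(iii)}, the paper argues via orthogonal idempotent decompositions of $1_R - u^{-1}au$ and $1_R - a$ (implicitly leaning on Proposition \ref{prop:2.2}\ref{prop:2.2(ii)} to translate between such decompositions and direct-sum decompositions of the annihilator), while you exhibit the right-$R$-linear bijection $\rann_R(a) \to \rann_R(u^{-1}au) \colon x \mapsto u^{-1}x$ and transport indecomposability along it. Your route is more direct given that coprimitivity is \emph{defined} by indecomposability of $\rann_R(\cdot)$ as a submodule of $R_R$, and your closing remark --- that the naive conjugation $x \mapsto u^{-1}xu$ is not right-$R$-linear, so left multiplication by $u^{-1}$ is the map one must use --- is exactly the right subtlety to flag.
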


\begin{proof}
\ref{prop:conjugation(i)}: Since $a = u(u^{-1} a u) u^{-1}$, it is enough to check that $u^{-1} \rann(a) \, u \subseteq \rann(u^{-1} a u)$, which is rather easy.
In fact, we have $u^{-1} a u(u^{-1} x u) = ax = 0_R$ and hence $u^{-1} x u \in \rann(u^{-1} a u)$ for all $x \in \allowbreak \rann(a)$. Thus $u^{-1} \rann(a) \, u \subseteq \rann(u^{-1} a u)$.

As for the second part, it is enough to note that $1_R - u^{-1} a u = u^{-1} (1_R - a) u$, so we derive from the first part that $\rfix_R(u^{-1}a u) = \rann(1_R - u^{-1} a u) = u^{-1} \rann(1_R - a) \, u = u^{-1} \rfix_R(a) \, u$.

\vskip 0.05cm

\ref{prop:conjugation(ii)}: This is clear from item \ref{prop:conjugation(i)}, since $b \in R$ is a left zero divisor if and only if $\rann(b) \ne \{0_R\}$.

\vskip 0.05cm

\ref{prop:conjugation(iii)}: The first part is obvious, since $a^2 = a$ implies $(u^{-1} a u)^2 = u^{-1} a^2 u = u^{-1} a u$. For the second part, it suffices to check that, if $a$ is a coprimitive idempotent, then so also is $u^{-1} a u$ (cf.~the proof of item \ref{prop:conjugation(i)}). 

For, suppose $1_R-u^{-1} a u = e + f$ for some orthogonal idempotents $e, f \in R$. Then $1_R-a = ueu^{-1} + \allowbreak ufu^{-1}$; and since $ueu^{-1} ufu^{-1} = uef u^{-1} = 0_R$ and similarly $ ufu^{-1} ueu^{-1} = 0_R$, we gather from the first part that $1_R-a$ is the sum of two orthogonal idempotents. Consequently, we see that $a$ is a coprimitive i\-dem\-po\-tent of $R$ only if so is $u^{-1} a u$.

\vskip 0.05cm

\ref{prop:conjugation(iv)}: Suppose $u^{-1} a u$ is not a degree-$s$ $\rfix_R$-irreducible of $H$ for a certain $s \ge 2$; as in the proof of item \ref{prop:conjugation(i)}, it will be enough to show that neither is $a$. Since the only $\rfix_R$-unit of $H$ is the identity $1_R$ and, on the other hand, $u^{-1} b u = 1_R$, for some $b \in R$, if and only if $a = 1_R$, we may assume that $u^{-1} a u$ is an $\rfix_R$-non-unit of $H$. There then exist $k \in \llb 2, s \rrb$ and $\rfix_R$-non-units $a_1, \ldots, a_k$ of $H$ such that $u^{-1}a u = \allowbreak a_1 \cdots a_k$ and $\rfix_R(u^{-1}au) \subsetneq \rfix_R(a_i)$ for each $i \in \llb 1, k \rrb$. It is therefore immediate that $a = \allowbreak u\,a_1 u^{-1} \cdots u\,a_k u^{-1}$. In addition, $u\,a_i u^{-1}$ is a non-identity element of $H$ for each $i \in \llb 1, k \rrb$, by the assumption that $R^\times H R^\times = H$; and it is straightforward from \ref{prop:conjugation(i)} that $\rfix_R(a) \subsetneq \rfix_R(u\, a_i u^{-1}) \subsetneq R$ (note that, if $X \subsetneq Y \subsetneq R$, then $v X w \subsetneq v Y w \subsetneq R$ for all $v, w \in R^\times$). As a result, $a$ is not an $\rfix_R$-irreducible of $H$, because it factors into a product of $s$ or fewer $\rfix_R$-non-units of $H$ each of which is (strictly) smaller than $a$ with respect to the $\rfix_R$-preorder on $H$.
\end{proof}

In the end, Corollary \ref{cor:2021-06-02-15:38} shows that, for an $\rfix_H$-artinian submonoid $K$ of a monoid $H$ to be i\-dem\-po\-tent-generated, it is sufficient to prove that every $\rfix_H$-irreducible of some degree $s$ of $K$ is i\-dem\-po\-tent. We will see this principle at work in Sect.~\ref{subsec:characterizations}. Necessary conditions, on the other hand, are discussed in the following remark. It turns out that the monoid of zero divisors of a ring $R$ can only be idempotent-generated if the multiplicative monoid of $R$ is \evid{directly irreducible}, that is, does not factor into the direct product of two non-trivial monoids.

\begin{remark}
\label{rem:idem-factorizations-in-direct-products}
Let $H := \prod_{i \in I} H_i$ be the \evid{direct product} of a family $(H_i)_{i \in I}$ of monoids indexed by the non-empty set $I$ (we refer to \cite[pp.~5--6]{Ho95} for notation and terminology), and let $S := \bigcup_{i \in I} H_i$. Given $j \in I$ and $a \in H_j$, we will write $\delta_j^{(a)}$ for the function $I \to S$ that maps $j$ to $a$ and an index $i \ne j$ to $1_{H_i}$.

It is fairly clear that an idempotent of $H$ is a function $e \colon I \to S$ such that $e(i)$ is an idempotent of $H_i$ for all $i \in I$. In addition, every $f \in \prod_{i \in I} H_i^\#$ is also in $H^\#$: This is obvious if $f(i) = 1_{H_i}$ for each $i \in I$. Otherwise, $f(j)$ is a singular element of $H_j$ for some $j \in I$, i.e., there exist $x, y, z, w \in H_j$ with $x \ne y$ and $z \ne w$ such that $x f(j) = y f(j)$ and $f(j)\, z = f(j)\, w$. So, we find $\delta_j^{(x)} f = \delta_j^{(y)} f$ and $f\delta_j^{(z)} = f\delta_j^{(w)}$, which shows that $f$ is in $H^\#$, because $\delta_j^{(x)} \ne \delta_j^{(y)}$ and $\delta_j^{(z)} \ne \delta_j^{(w)}$.

Since an element that factors into a product of proper idempotents is necessarily singular (as already mentioned in Sect.~\ref{sec:monoids}), it thus follows that 
\begin{equation}\label{equ:2021-06-17-17:44}
\mathcal E(H) \subseteq \prod_{i \in I} \mathcal E(H_i) \subseteq \prod_{i \in I} H_i^\# \subseteq H^\#,
\end{equation}
with $\prod_{i \in I} \mathcal E(H_i) = \prod_{i \in I} H_i^\#$ if and only if $\mathcal E(H_i) = H_i^\#$ for each $i \in I$. We will look for conditions under which each of the left-most and right-most inclusions in Eq.~\eqref{equ:2021-06-17-17:44} holds as an equality (trivially, this is the case if $H$ is the multiplicative monoid of a domain $R$, for then the only elements of $H^\#$ are $0_R$ and $1_R$).

To start with, we recall from \cite[Definitions 1.1]{Hig94} that the \evid{idempotent depth} of an idempotent-generated monoid $M$, herein denoted by $\Delta(M)$, is the infimum of the set of integers $k \ge 1$ such that every element of $M$ factors into a product of $k$ or fewer i\-dem\-po\-tents, with the understanding that $\inf \emptyset := \infty$ (note that, since the identity $1_M$ is an idempotent, an element of $M$ factors as a product of $m$ idempotents if and only if it factors as a product of $n$ idempotents for every $n \ge m$).

\begin{claim}\label{claimA}
$\mathcal E(H) = \prod_{i \in I} \mathcal E(H_i)$ if and only if $\Delta(\mathcal E(H_i)) = \infty$ for finitely many $i \in I$.
\end{claim}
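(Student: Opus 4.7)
The plan exploits the structural fact (noted earlier in the remark) that an idempotent of $H = \prod_{i \in I} H_i$ is exactly a function whose value at each index is idempotent in the corresponding $H_i$. Consequently, $g \in \mathcal{E}(H)$ if and only if there exists a single integer $N \ge 1$ such that each coordinate $g(i)$ admits an idempotent factorization of length at most $N$ in $H_i$: one reads off the coordinates of any product $\epsilon_1 \cdots \epsilon_N$ of idempotents of $H$, and, conversely, bundles shorter coordinatewise factorizations (padded with copies of $1_{H_i}$) into idempotents of $H$. The crux of both implications will therefore be the existence of a \emph{uniform} bound on the coordinate-wise factorization lengths.

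For the forward direction, I would argue by contrapositive. Assuming $J := \{i \in I : \Delta(\mathcal{E}(H_i)) = \infty\}$ is infinite, extract a countable sequence of pairwise distinct indices $i_1, i_2, \ldots \in J$ and use $\Delta(\mathcal{E}(H_{i_n})) = \infty$ to pick $a_n \in \mathcal{E}(H_{i_n})$ whose shortest idempotent factorization in $H_{i_n}$ has length strictly greater than $n$. Define $f \in H$ by $f(i_n) := a_n$ and $f(i) := 1_{H_i}$ for every $i \in I \setminus \{i_n : n \ge 1\}$. Then $f$ lies in $\prod_{i \in I} \mathcal{E}(H_i)$ but not in $\mathcal{E}(H)$: if $f = \epsilon_1 \cdots \epsilon_N$ for some idempotents $\epsilon_k \in H$, then evaluating at $i_{N+1}$ would exhibit $a_{N+1}$ as a product of $N$ idempotents of $H_{i_{N+1}}$, contradicting the choice of $a_{N+1}$.

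For the reverse direction, take an arbitrary $f \in \prod_{i \in I} \mathcal{E}(H_i)$ and let $N_i$ be the minimum idempotent factorization length of the specific element $f(i) \in \mathcal{E}(H_i)$. With $J := \{i : \Delta(\mathcal{E}(H_i)) = \infty\}$ (finite by hypothesis), the contribution from $J$ is controlled by the finite maximum $\max_{i \in J} N_i$, and for each $i \in I \setminus J$ we have $N_i \le \Delta(\mathcal{E}(H_i)) < \infty$. Once a uniform bound $M \ge \sup_i N_i$ is secured, pad every $f(i)$'s factorization to length exactly $M$ and bundle the factors coordinatewise into idempotents $\epsilon_1, \ldots, \epsilon_M$ of $H$ whose product is $f$, so that $f \in \mathcal{E}(H)$.

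The hardest step will be securing the uniform bound $M$ in the reverse direction: the finiteness of $J$ alone guarantees $\max_{i \in J} N_i < \infty$, but controlling $\sup_{i \in I \setminus J} N_i$ seems to require that $\sup_{i \in I \setminus J} \Delta(\mathcal{E}(H_i)) < \infty$, which does not follow formally from the stated hypothesis. I would expect either to invoke additional structure tacit in the surrounding remark, or to reinterpret the hypothesis so as to incorporate uniform boundedness of the depths off the finite exceptional set -- this is the single point where I would want to see the author's argument to pin down the intended reading of the statement.
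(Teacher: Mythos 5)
Your strategy coincides with the paper's. The forward direction is proved by exactly the same diagonal argument: pick pairwise distinct indices with unbounded depths, choose coordinate elements whose shortest idempotent factorizations have strictly increasing lengths, assemble them into a single function, and evaluate a hypothetical length-$N$ factorization at the $(N+1)$-st index. The reverse direction is the same padding-and-bundling scheme (coordinatewise factorizations padded with identities and repackaged as idempotents of the product).

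The obstacle you isolate in the reverse direction is genuine, and it is a defect of the claim (and of the paper's proof) rather than of your argument. The paper's proof simply declares that $\Delta := 1 + \sup_{i \in I \setminus J} \Delta_i$ ``is a positive integer,'' which is precisely the uniform bound you observe does not follow from the finiteness of $J$: each $\Delta_i$ with $i \notin J$ is finite, but nothing forces their supremum over an infinite index set to be finite. Concretely, take $I = \mathbf N^+$ and let $H_n$ be the multiplicative monoid of $\mathcal M_n(K)$ for a field $K$ and $n \ge 2$: by Corollary \ref{cor:erdos-dawlings} together with Ballantine's sharpness result quoted right after it, $\Delta(\mathcal E(H_n)) = n < \infty$ for every $n$, so $J = \emptyset$ is finite; yet the function sending $n$ to a singular matrix requiring $n$ idempotent factors lies in $\prod_n \mathcal E(H_n) \setminus \mathcal E(H)$ by your own diagonal argument. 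So the ``if'' direction of the claim fails for infinite $I$; the hypothesis that actually makes your (and the paper's) argument work is that $\sup_{i \in I \setminus F} \Delta(\mathcal E(H_i)) < \infty$ for some finite $F \subseteq I$. There is no tacit extra structure to invoke; the statement is simply too strong as written. This does not propagate into the rest of the paper, because Claim \ref{claimA} is only applied in Remark \ref{rem:semisimple-rings} with $I$ finite, where the supremum is automatically finite and your argument closes without further ado.
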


\begin{proof}
To ease notation, set $\Delta_i := \Delta(\mathcal E(H_i))$ for each $i \in I$ and then $J := \{j \in I \colon \Delta_j = \infty\}$. We need to check that $\mathcal E(H) = \prod_{i \in I} \mathcal E(H_i)$ if and only if $J$ is a finite set.

Suppose first that $J$ is finite and let $f$ be a function in $\prod_{i \in I} \mathcal E(H_i)$. Then $\Delta := 1 + \sup_{i \in I \setminus J} \Delta_i$ is a positive integer (with $\sup \emptyset := 0$) and, for each $j \in J$, $f(j)$ factors into a product of $k_j$ idempotents of $H_j$ for some $k_j \in \mathbb{N}^+$. Hence, there exists an integer $k \ge 1$ such that, for every $i \in I$, $f(i)$ is a product of $k$ idempotents $e_i^{(1)} \ldots, e_i^{(k)} \in H_i$ (some of which are possibly equal to the identity $1_{H_i}$); for instance, we may take $k$ to be the maximum of the (non-empty, finite) set $\{\Delta\} \cup \{k_j \colon j \in J\} \subseteq \mathbb{N}^+$. As a result, $f$ is a product of $k$ idempotents $e_1, \ldots, e_k \in H$, where, for each $j \in \llb 1, k \rrb$, $e_j$ is the function $I \to S \colon i \mapsto e_i^{(j)}$. Therefore, we gather from Eq.~\eqref{equ:2021-06-17-17:44} that $\prod_{i \in I} \mathcal E(H_i) = \mathcal E(H)$.

Assume, on the other hand, that $J$ is infinite. There is then a sequence $i_1, i_2, \ldots$ of pairwise distinct indices in $I$ with $\Delta_{i_j} \ge j$ for all $j \in \mathbb{N}^+$. Consequently, we have an $S$-valued sequence $x_1, x_2, \ldots$ such that, for each $j \in \mathbb{N}^+$, $x_j$ is an element of $\mathcal E(H_{i_j})$ whose shortest idempotent-factorization in $H_{i_j}$ has at least $j$ factors. Accordingly, we set $I' := \{i_1, i_2, \ldots\}$ and define $g$ to be the function 
\[
I \to S \colon i \mapsto 
\left\{
\begin{array}{ll}
\! x_i & \text{if } i \in I'  \\
\! 1_{H_i}   & \text{if } i \in I \setminus I'
\end{array}
\right..
\]
 
By construction, $g \in \prod_{i \in I} \mathcal E(H_i)$. However, $g \notin \mathcal E(H)$ and hence $\mathcal E(H) \neq \prod_{i \in I} \mathcal E(H_i)$. To see this, suppose to the contrary that $g$ factors as a product of $n$ idempotents of $H$ for some $n \in \mathbb{N}^+$. Then $g(i)$ is a product of $n$ idempotents of $H_i$ for every $i \in I$, which is impossible because we have $g(i_{n+1}) = x_{n+1}$ and, again by construction, $x_{n+1}$ does not factor into less than $n+1$ idempotents of $H_{i_{n+1}}$.
\end{proof}

We note for later reference that
the minimum length $\ell(f)$ of an idempotent factorization of a function $f \in \mathcal E(H)$ is bounded above by $ \max_{i \in I} \ell_i(f)$, where we denote by $\ell_i(f)$ the minimum length of an i\-dem\-po\-tent factorization of $f(i)$ in $\mathcal E(H_i)$. In fact, $f = e_1 \cdots e_n$ for some idempotents $e_1, \ldots, e_n \in H$, with the result that $\ell_i(f) \leq n$ for every $i \in I$. Therefore, the (non-empty) set $\{\ell_i(f) \colon i \in I\} \subseteq \mathbb{N}$ has a maximum, say $m$; and similarly as in the proof of Claim \ref{claimA}, we see that, for each $i \in I$, $f(i)$ factors as a product of $m$ idempotents of $H_i$ (some of which may be equal to $1_{H_i}$). This, in turn, shows that $f$ can be written as a product of $m$ idempotents of $H$. That is to say, $\ell(f) \le m$ (as wished). 

\begin{claim}\label{claimB}
If $H_j^\# \subsetneq H_j$ and $H_k^\# \ne \{1_{H_k}\}$ for some $j, k \in I$ with $j \ne k$, then $\prod_{i \in I} H_i^\# \subsetneq H^\#$.
\end{claim}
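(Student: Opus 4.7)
The plan is to exhibit an explicit function $f \in H$ that lies in $H^\#$ but not in $\prod_{i \in I} H_i^\#$, which, combined with the inclusion $\prod_{i \in I} H_i^\# \subseteq H^\#$ already established in Eq.~\eqref{equ:2021-06-17-17:44}, will yield the desired strict inclusion.

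First, I would use the two hypotheses to extract the raw material. From $H_j^\# \subsetneq H_j$, I pick an element $a \in H_j \setminus H_j^\#$: by definition of $H_j^\#$, this means $a \ne 1_{H_j}$ and $a$ is not singular in $H_j$ (i.e.\ $a$ is either right- or left-cancellative there). From $H_k^\# \ne \{1_{H_k}\}$, I pick $b \in H_k^\# \setminus \{1_{H_k}\}$, which is therefore a singular element of $H_k$. Since $j \ne k$, I can then define $f \colon I \to S$ by $f(j) := a$, $f(k) := b$, and $f(i) := 1_{H_i}$ for $i \in I \setminus \{j,k\}$.

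Next, I would verify the two required properties of $f$. Membership in $H^\# \setminus \prod_{i \in I} H_i^\#$ fails on the right simply because $f(j) = a \notin H_j^\#$. For the left membership, $f \ne 1_H$ (since $f(k) = b \ne 1_{H_k}$), so it suffices to show $f$ is singular in $H$. Since $b$ is singular in $H_k$, there exist $x, y \in H_k$ with $x \ne y$ and $xb = yb$, and $z, w \in H_k$ with $z \ne w$ and $bz = bw$. Using the notation $\delta_k^{(\,\cdot\,)}$ introduced in the preceding discussion, one checks coordinatewise that $\delta_k^{(x)} f = \delta_k^{(y)} f$ and $f \delta_k^{(z)} = f \delta_k^{(w)}$, with $\delta_k^{(x)} \ne \delta_k^{(y)}$ and $\delta_k^{(z)} \ne \delta_k^{(w)}$; at coordinate $k$ this is exactly the singularity relations for $b$, at coordinate $j$ both sides equal $a$, and at every other $i$ both sides equal $1_{H_i}$. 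Thus $f$ is both right- and left-singular in $H$, hence $f \in H^\#$.

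There is essentially no obstacle here: the whole point is that singularity of a function in the direct product is witnessed coordinatewise by any single singular component, so the non-singular coordinate $j$ (which is what makes $f$ escape $\prod_{i \in I} H_i^\#$) does not obstruct $f$ from being singular in $H$. The only thing to be careful about is invoking both hypotheses to ensure the two distinguished coordinates $j$ and $k$ are distinct, which is exactly what the statement assumes.
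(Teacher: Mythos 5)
Your proof is correct and follows essentially the same route as the paper's: both construct the witness $f = \delta_j^{(a)}\delta_k^{(b)}$ with $a \in H_j \setminus H_j^\#$ and $b$ a singular element of $H_k$, verify $f \in H^\#$ via the coordinatewise singularity relations inherited from $b$, and note $f \notin \prod_{i \in I} H_i^\#$ because $f(j) = a$. No gaps.
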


\begin{proof}
Assume $H_j^\# \subsetneq H_j$ and $H_k^\# \ne \{1_{H_k}\}$ for some $j, k \in I$ with $j \ne k$, so that we can pick an element $a \in H_j \setminus H_j^\#$ and a singular element $b \in H_k^\#$. There then exist $x, y, z, w \in H_k$ with $x \ne y$ and $z \ne w$ such that $xb = yb$ and $bz = bw$.
Therefore, the function $f := \delta_j^{(a)} \delta_k^{(b)}$ is in $H^\#$, because $\delta_k^{(x)} f = \delta_k^{(y)} f$ and $f \delta_k^{(z)} = f \delta_k^{(w)}$ with $\delta_k^{(x)} \ne \delta_k^{(y)}$ and $\delta_k^{(z)} \ne \delta_k^{(w)}$. Yet, $f$ is not  in $\prod_{i \in I} H_i^\#$, by the fact that $f(j) = a \notin H_j^\#$. Consequently, we gather from Eq.~\eqref{equ:2021-06-17-17:44} that $H^\#$ is properly contained in $\prod_{i \in I} H_i^\#$.
\end{proof}

If $I$ is a finite set or $\Delta(\mathcal E(H_i)) < \infty$ for each $i \in I$, we obtain from Eq.~\eqref{equ:2021-06-17-17:44} and Claim \ref{claimA} that $\mathcal E(H) = \allowbreak \prod_{i \in H_i} H_i^\#$ if and only if $H_i^\#$ is idempotent-generated for every $i \in I$. If, on the other hand, each of the monoids $H_i$ is the mul\-ti\-pli\-ca\-tive monoid of a (non-zero) ring $R_i$, then $H_i^\#$ is a non-trivial monoid for all $i \in I$ and hence we derive from Claim \ref{claimB} that $H^\# = \prod_{i \in H_i} H_i^\#$ only if, for all $i \in I$, any element of $R_i$ other than the identity is a zero divisor. Rings with this property are known as ``$\mathcal O$-rings'' and were first considered by P.\,M.~Cohn in \cite{Cohn58} (though only in the commutative case). While it was proved by H.\,G.~Moore et al.~\cite{Mo-Pi-Ya68} that any right (or left) artinian $\mathcal O$-ring is commutative, it appears to be open \cite{Hen} whether non-commutative $\mathcal O$-rings do actually exist (see also \url{https://mathoverflow.net/questions/395787/}).
\end{remark}

\section{Idempotent factorizations in arbitrary rings}
\label{sec:4}
We are going to apply the abstract results of Sect.~\ref{sec:3} to the study of idempotent factorizations in the multiplicative monoid of certain rings (it may help to review Sects.~\ref{sec:monoids} and \ref{sec:2.3} before reading further). 
\vskip 0.2cm
\begin{quotation}
\centering
\emph{Throughout this section \textup{(}unless noted otherwise\textup{)}, we let $R$ be an arbitrary ring.}
\end{quotation}
\vskip 0.2cm
Our goal is to prove a series of results that will be used in Sect.~\ref{subsec:characterizations} to characterize or partially characterize the $\rfix_R$-irreducibles and $\rfix_R$-quarks of the monoid of zero divisors of $R$ when $R$ is a right Rickart ring (Proposition \ref{prop:4.6}, Theorem \ref{thm:rFix-quarks-in-right-rickart}, and Remark \ref{rem:left-sing-implies-sing}), a von Neumann regular ring satisfying the comparability axiom (Theorem \ref{thm:5.8}), or the endomorphism ring of a free module of finite rank over a commutative discrete valuation domain (Proposition \ref{dge1}). 
We start with a lemma that is perhaps of independent interest.

\begin{lemma}\label{lem:2021-05-05-11:04}
If $\rann(a)$ is a direct summand of $R_R$ for some $a \in R$, then there is an idempotent $e \in R$ such that $
\rann(a) = eR$ and $\rfix(a) \subseteq (1_R-e)R$.
\end{lemma}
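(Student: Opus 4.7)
The plan is to start from any idempotent generator of $\rann(a)$ and refine it by a single ``corner'' correction so that the refined idempotent additionally annihilates $\rfix(a)$. The naive generator typically fails this extra condition (elementary examples in $\mathcal M_n(k)$ already produce idempotent generators $e_0$ of $\rann(a)$ with $e_0 \cdot \rfix(a) \ne 0$), so an explicit adjustment is needed.

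Concretely, the hypothesis gives $R = \rann(a) \oplus K$ for some right ideal $K$, and Proposition~\ref{prop:2.2}\ref{prop:2.2(i)} then yields an idempotent $e_0 \in R$ with $e_0 R = \rann(a)$; in particular $a e_0 = 0$. I would then set
\[
e := e_0 - e_0 a (1_R - e_0),
\]
noting that the correction lives in the corner $e_0 R (1_R - e_0)$.

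Three checks remain. Expanding $e^2$, the cross terms containing the factor $(1_R - e_0) e_0 = 0$ vanish, leaving $e^2 = e_0 - e_0 a(1_R - e_0) = e$; so $e$ is idempotent. Next, $e \in e_0 R$ gives $eR \subseteq e_0 R$, while $e \cdot e_0 = e_0^2 - e_0 a(1_R - e_0) e_0 = e_0$ gives $e_0 \in eR$; hence $eR = e_0 R = \rann(a)$. Finally, for $x \in \rfix(a)$ we have $ax = x$, so
\[
ex = e_0 x - e_0 a x + e_0 a e_0 x = e_0 x - e_0 x + 0 = 0,
\]
the last term vanishing since $e_0 x \in e_0 R = \rann(a)$ forces $a(e_0 x) = 0$. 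Thus $x = (1_R - e) x \in (1_R - e) R$.

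The only genuinely non-obvious step is guessing the right corner correction $-e_0 a(1_R - e_0)$; once that candidate is written down, every verification above relies only on the two identities $e_0^2 = e_0$ and $a e_0 = 0$.
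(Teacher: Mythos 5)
Your proof is correct and is essentially the paper's own argument: since $a e_0 = 0_R$, your correction term $e_0 a(1_R - e_0)$ equals $e_0 a$, so your $e$ is exactly the element $e_0(1_R - a)$ that the paper constructs (with $f = e_0$), and the three verifications match. The ``corner'' framing is a nice way to see why idempotency is automatic, but it does not change the substance.
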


\begin{proof}
Let $\rann(a)$ be a direct summand of $R_R$. By Proposition \ref{prop:2.2}\ref{prop:2.2(i)}, there exists an i\-dem\-po\-tent $f \in R$ such that $\rann(a) = fR$. Set $e := f-fa$. Since $fe = f^2(1_R-a) = e$ and $ef = f^2-faf = f$ (note that $af = 0_R)$, we have $eR=fR$. Moreover, we check that $e^2= fe - faf(1_R-a) = e$, i.e., $e$ is an i\-dem\-po\-tent of $R$. Lastly, if $x \in R$ and $(1_R-a)x = 0_R$, then $(1_R-e)x = x - f(1_R-a)x = x - 0_R = x$, which shows that $\rfix(a)$ is contained in $(1_R-e)R$.
\end{proof}

We continue with a couple of technical lemmas whose role will be clarified by Proposition \ref{prop:4.6}.

\begin{lemma}\label{lem:2021-04-29-15:28}
Assume there exist $a \in R$ and right ideals $\mathfrak i, \mathfrak j \subseteq R$ such that $R = \mathfrak i \oplus \mathfrak j$ and $\mathfrak i \subseteq \rann(a)$, and set $\mathfrak{p} := \rann(a^2) \cap \mathfrak{j}$ and $\mathfrak{q} := \{x+ax \colon x \in \mathfrak p\}$. The following hold:
\begin{enumerate}[label=\textup{(\roman{*})}]
\item\label{lem:2021-04-29-15:28(i)} $\rann(a) \cap \mathfrak{j} = \{0_R\}$ if and only if $\mathfrak i = \rann(a)$.

\item\label{lem:2021-04-29-15:28(ii)} $\mathfrak{p}$ and $\mathfrak{q}$ are right ideals of $R$ with $\mathfrak{q} \cap \rfix(a) = \{0_R\}$. 

\item\label{lem:2021-04-29-15:28(iii)} $\mathfrak{p} = \{0_R\}$ if and only if $\mathfrak{q} = \{0_R\}$, if and only if $\mathfrak{i} = \rann(a) = \rann(a^2)$.
\end{enumerate}
\end{lemma}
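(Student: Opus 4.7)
The plan is to handle the three items in sequence, using only the direct-sum decomposition $R = \mathfrak{i} \oplus \mathfrak{j}$, the hypothesis $\mathfrak{i} \subseteq \rann(a)$, and the identity $a^2 x = 0_R$ for $x \in \mathfrak{p}$. I expect no real obstacle: the arguments are routine, though I want to isolate a single decomposition trick used in (i) and reuse it verbatim when upgrading $\rann(a)$ to $\rann(a^2)$ in (iii).

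For (i), the reverse implication is immediate: if $\mathfrak{i} = \rann(a)$, then $\rann(a) \cap \mathfrak{j} = \mathfrak{i} \cap \mathfrak{j} = \{0_R\}$ from $R = \mathfrak{i}\oplus\mathfrak{j}$. For the forward implication, I take $y \in \rann(a)$ and split $y = u + v$ with $u \in \mathfrak{i}$ and $v \in \mathfrak{j}$; since $au = 0_R$ (as $\mathfrak{i} \subseteq \rann(a)$) and $ay = 0_R$, I get $av = 0_R$, forcing $v \in \rann(a) \cap \mathfrak{j} = \{0_R\}$ and hence $y = u \in \mathfrak{i}$.

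For (ii), $\mathfrak{p}$ is the intersection of two right ideals and hence itself a right ideal; and $\mathfrak{q} = (1_R + a)\,\mathfrak{p}$ is the image of $\mathfrak{p}$ under the right $R$-linear map $R \to R \colon y \mapsto (1_R + a)\,y$, hence also a right ideal. For the trivial intersection $\mathfrak{q} \cap \rfix(a) = \{0_R\}$, I write $z = x + ax$ with $x \in \mathfrak{p}$ and assume $az = z$; expanding gives $ax + a^2 x = x + ax$, i.e., $x = a^2 x$, but $x \in \mathfrak{p} \subseteq \rann(a^2)$ forces $x = 0_R$ and so $z = 0_R$.

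For (iii), the equivalence of $\mathfrak{p} = \{0_R\}$ and $\mathfrak{q} = \{0_R\}$ follows from the surjection $\mathfrak{p} \to \mathfrak{q} \colon x \mapsto (1_R + a)x$: one direction is trivial, and for the converse I observe that $\mathfrak{q} = \{0_R\}$ gives $x = -ax$ for every $x \in \mathfrak{p}$, whence applying $a$ yields $ax = -a^2 x = 0_R$ (using $x \in \rann(a^2)$) and thus $x = 0_R$. The equivalence with $\mathfrak{i} = \rann(a) = \rann(a^2)$ then splits into two directions: the reverse is the identity $\mathfrak{p} = \rann(a^2) \cap \mathfrak{j} = \mathfrak{i} \cap \mathfrak{j} = \{0_R\}$; for the forward direction, $\mathfrak{p} = \{0_R\}$ gives $\rann(a) \cap \mathfrak{j} \subseteq \rann(a^2) \cap \mathfrak{j} = \{0_R\}$, and (i) yields $\mathfrak{i} = \rann(a)$, after which I reuse the decomposition argument from (i) with $\rann(a^2)$ in place of $\rann(a)$—valid because $\mathfrak{i} \subseteq \rann(a) \subseteq \rann(a^2)$ and $\rann(a^2) \cap \mathfrak{j} = \mathfrak{p} = \{0_R\}$—to conclude $\rann(a^2) \subseteq \mathfrak{i} = \rann(a)$.
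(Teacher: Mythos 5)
Your proof is correct and takes essentially the same approach as the paper: everything rests on decomposing elements along $R = \mathfrak i \oplus \mathfrak j$ and using $\mathfrak i \subseteq \rann(a) \subseteq \rann(a^2)$. The only differences are cosmetic --- you argue directly where the paper argues by contrapositive in (i) and (iii), and your derivation of $\mathfrak p = \{0_R\}$ from $\mathfrak q = \{0_R\}$ (apply $a$ to $x = -ax$) is marginally cleaner than the paper's case split on whether $ax = 0_R$.
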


\begin{proof}
\ref{lem:2021-04-29-15:28(i)}: Since $\mathfrak i$ and $\mathfrak j$ are ideals of $R$ with $\mathfrak{i} \oplus \mathfrak{j} = R$, it is clear that $\mathfrak{i} = \rann(a)$ implies $\rann(a) \cap \mathfrak{j} = \{0_R\}$. As for the converse, assume $\mathfrak{i} \subsetneq \rann(a)$ and, accordingly, let $x$ be an element in $\rann(a)$ but not in $\mathfrak{i}$ (recall that, by hypothesis, $\rann(a)$ contains $\mathfrak{i}$). Then $x = y+z$ for some $y \in \mathfrak i$ and $z \in \mathfrak j \setminus \{0_R\}$, with the result that
$
0_R = \allowbreak ax = ay + az = az$.
This proves $0_R \ne z \in \rann(a) \cap \mathfrak{j}$, and we are done.

\vskip 0.05cm

\ref{lem:2021-04-29-15:28(ii)}: The intersection of two right ideals is still a right ideal; therefore, $\mathfrak p$ is a right ideal, because so are $\rann(a^2)$ and $\mathfrak j$. On the other hand, if $x, y \in \mathfrak p$ and $b, c \in R$, then 
\[
(x+ax)b + (y+ay)c = (xb+yc) + a(xb+yc) \in \mathfrak q, 
\]
by the fact that $\mathfrak p$ is a right ideal and hence $xb+yc \in \mathfrak p$. So, $\mathfrak q$ is a right ideal too.

To complete the argument, let $x \in \mathfrak q \cap \rfix(a)$. Then $ax = x$ and $x = y + ay$ for some $y \in \mathfrak p \subseteq \allowbreak \rann(a^2)$, so that
\[
y + ay = a(y + ay) = ay + a^2y = ay + 0_R = ay.
\]
It follows that $y = 0_R$ and hence $x = y + ay = 0_R$. As a result, $\mathfrak q$ and $\rfix(a)$ intersect trivially.

\vskip 0.05cm

\ref{lem:2021-04-29-15:28(iii)}: Suppose first that $\rann(a^2) \setminus \rann(a)$ is a non-empty set and let $x$ be one of its elements. Since $R = \mathfrak i \oplus \mathfrak j$ and $\mathfrak i \subseteq \rann(a)$, we have $x = y+z$ for some $y \in \rann(a)$ and $z \in \mathfrak j \setminus \{0_R\}$. Consequently,
\[
0_R = a^2x = a^2(y+z) = a(ay + az)= a(0_R + az) = a^2 z,
\]
from which we gather that $z$ is a non-zero element of $\mathfrak p = \mathfrak j \cap \rann(a^2)$.

Since $\rann(a) \subseteq \rann(a^2)$, we obtain from the above that, if $\mathfrak p = \{0_R\}$, then $\rann(a) = \rann(a^2)$ and hence $\{0_R\} = \mathfrak p = \mathfrak j \cap \rann(a)$. On the other hand, we have from item \ref{lem:2021-04-29-15:28(i)} that $\mathfrak j \cap \rann(a) \allowbreak = \{0_R\}$ if and only if $\mathfrak{i} = \rann(a)$. So putting it all together, we conclude that if $\mathfrak p = \{0_R\}$ then $\mathfrak{i} = \rann(a) \allowbreak = \rann(a^2)$; the converse is trivial, because $\mathfrak{i} \cap \mathfrak{j} = \{0_R\}$.

It remains to show that $\mathfrak{q} = \{0_R\}$ if and only if $\mathfrak{p} = \{0_R\}$. Since the ``if'' part is obvious, it suffices to check that if $\mathfrak{p} \ne \{0_R\}$ then $\mathfrak{q} \neq \{0_R\}$. For, assume $\mathfrak p$ contains an element $x \ne 0_R$. Then either $ax = 0_R$ and hence $x = x + ax \in \mathfrak q \setminus \{0_R\}$; or $0_R \ne ax = ax + \allowbreak a^2x = \allowbreak a(x+ax)$ and hence $x + ax \in \mathfrak{q} \setminus \{0_R\}$.
\end{proof}

\begin{lemma}\label{lem:2021-04-29-12:26}
Let $e \in R$ be an idempotent in the right annihilator $\rann(a)$ of an element $a \in R$, and set $\mathfrak{p} := \allowbreak \rann(a^2) \allowbreak \cap \allowbreak (1_R-e)R$ and $\mathfrak{q} := \{x+ax \colon x \in \mathfrak p\}$. 
The following hold:
	
\begin{enumerate}[label=\textup{(\roman{*})}] 
\item\label{lem:2021-04-29-12:26(i)} $a = bc$ and $c^2 = c$, where $b := e + (1_R - e)a$ and $c := 1_R-e + ea$.
		
\item\label{lem:2021-04-03-12:06(ii)} 
$\rann(a) \cap (1_R-e)R \subseteq \rann(b)$ and $eR \oplus \rfix(a) \subseteq \rfix(b)$. 
		
\item\label{lem:2021-04-03-12:06(iii)} $eR = \rann(c)$ and $\rfix(a) \oplus (\rann(a) \cap (1_R-e)R) \subseteq \rfix(c)$.

\item\label{lem:2021-05-01-11:43(iv)} If $eR = \rann(a)$, then $\mathfrak{p} \subseteq \rann(b)$ and $\mathfrak{q} \oplus \rfix(a) \subseteq \rfix(c)$.
\end{enumerate}
\end{lemma}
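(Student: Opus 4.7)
My approach is direct computation, leveraging only the two defining identities $e^2 = e$ and $ae = 0_R$ (the latter from $e \in \rann(a)$) together with the Peirce decomposition $R = eR \oplus (1_R-e)R$ of Eq.~\eqref{equ:direct-sum-decomposition-idemp}. For item \ref{lem:2021-04-29-12:26(i)}, I would just expand $bc$ and $c^2$; both collapse quickly once one uses $e^2 = e$ and $ae = 0_R$ (which also forces $aea = 0_R$), giving $bc = ea + (1_R-e)a = a$ and $c^2 = (1_R-e) + ea = c$.

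For item \ref{lem:2021-04-03-12:06(ii)}, the containment $\rann(a) \cap (1_R-e)R \subseteq \rann(b)$ is immediate: if $ax = 0_R$ and $x = (1_R-e)y$, then $bx = ex + (1_R-e)ax = 0_R$. For the direct-sum inclusion, I would first note the trivial intersection $eR \cap \rfix(a) = \{0_R\}$ (if $x = ey = ax$ then $x = aey = 0_R$), then separately check $bx = x$ on each of $eR$ and $\rfix(a)$, and conclude by additivity. Item \ref{lem:2021-04-03-12:06(iii)} proceeds in the same spirit: $eR \subseteq \rann(c)$ is a one-line check, and for the reverse inclusion I would use $R = eR \oplus (1_R-e)R$ to split $cx = (1_R-e)x + eax = 0_R$ into $(1_R-e)x = 0_R$ and $eax = 0_R$, forcing $x = ex \in eR$. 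The intersection $\rfix(a) \cap \rann(a) = \{0_R\}$ is automatic (any element satisfies $x = ax = 0_R$), and the two separate inclusions of $\rfix(a)$ and of $\rann(a) \cap (1_R-e)R$ into $\rfix(c)$ are again direct checks.

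Item \ref{lem:2021-05-01-11:43(iv)} is the only place where the stronger hypothesis $eR = \rann(a)$ does any real work: for $x \in \mathfrak{p} \subseteq \rann(a^2)$, the equation $a^2x = 0_R$ yields $ax \in \rann(a) = eR$, hence $(1_R-e)ax = 0_R$, which combined with $ex = 0_R$ (since $x \in (1_R-e)R$) gives $bx = 0_R$. For the final inclusion, the trivial intersection $\mathfrak{q} \cap \rfix(a) = \{0_R\}$ is already supplied by Lemma \ref{lem:2021-04-29-15:28}\ref{lem:2021-04-29-15:28(ii)}; and for $y = x + ax \in \mathfrak{q}$ I would compute $cy = cx + c(ax)$, noting that $c(ax) = 0_R$ because $ax \in eR = \rann(c)$ by item \ref{lem:2021-04-03-12:06(iii)}, while $cx = (1_R-e)x + eax = x + ax = y$ using $x \in (1_R-e)R$ and $ax \in eR$. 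Combining with $\rfix(a) \subseteq \rfix(c)$ from item \ref{lem:2021-04-03-12:06(iii)} then gives the direct-sum inclusion. There is no real obstacle — the whole lemma is a bookkeeping exercise — but the one subtlety worth flagging is that item \ref{lem:2021-05-01-11:43(iv)} genuinely requires $eR = \rann(a)$ (not just $eR \subseteq \rann(a)$), precisely to promote $ax$ into $eR$ when $x \in \rann(a^2)$.
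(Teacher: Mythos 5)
Your proposal is correct and follows essentially the same route as the paper's proof: direct computation from $e^2 = e$ and $ae = 0_R$ together with the Peirce decomposition $R = eR \oplus (1_R-e)R$, with the direct-sum claims reduced to the trivial intersections with $\rfix(a)$ and, for item \ref{lem:2021-05-01-11:43(iv)}, to Lemma \ref{lem:2021-04-29-15:28}\ref{lem:2021-04-29-15:28(ii)}. The only cosmetic differences are that the paper obtains $c^2 = c$ by citing Proposition \ref{prop:2.2}\ref{prop:2.2(v)} rather than expanding the square, and organizes the verifications of $\rann(c) \subseteq eR$ and of $cy = y$ for $y \in \mathfrak{q}$ slightly differently; the substance is identical.
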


\begin{proof}
\ref{lem:2021-04-29-12:26(i)}: Since $e$ is an i\-dem\-po\-tent element of $R$ from the right annihilator of $a$, we have 
\begin{equation}\label{equ:2021-05-01-10:35}
e(1_R - e) = (1_R - e)e = ae = 0_R
\quad\text{and}\quad
a(1_R - e) = a - ae = a \in R(1_R-e).
\end{equation}
Consequently, it is clear that $ea \in eR(1_R-e)$ and we get from Proposition \ref{prop:2.2}\ref{prop:2.2(v)} (applied with $z = ea$) that $c$ is an idempotent of $R$. Moreover, a simple calculation shows that
\[
bc = e(1_R - e) + e^2 a + (1_R - e)\, a (1_R - e) + (1_R - e)\, aea = 0_R + ea + (1_R - e) a + 0_R = a.
\]

\ref{lem:2021-04-03-12:06(ii)} and \ref{lem:2021-04-03-12:06(iii)}: It is easily checked that, for every $x \in \rfix(a)$,
\begin{equation}\label{equ:2021-05-01-10:38}
bx = ex + (1_R-e)ax = ex + (1_R - e)x = x
\quad\text{and}\quad
cx = (1_R - e)x + eax = x;
\end{equation}
while it follows from Eq.~\eqref{equ:2021-05-01-10:35} (and $e$ being i\-dem\-po\-tent) that
\begin{equation}\label{equ:2021-05-01-10:39}
be = e^2 + (1_R-e) ae = e + 0_R = e
\quad \text{and}\quad
ce = (1_R-e) e + eae = 0_R + 0_R = 0_R.
\end{equation}
Likewise, if $x \in \rann(a) \cap (1_R-e)R$, then $x = (1_R - e)y$ for some $y \in R$ and hence
\begin{equation}\label{equ:2021-05-01-10:40}
bx = e(1_R-e)y + (1_R-e) ax = 
0_R + 0_R = 0_R
\quad\text{and}\quad
cx = x - e(1_R-e)y + eax = x.
\end{equation}
So, recalling that $e \in \rann(a)$,
we conclude from Eqs.~\eqref{equ:2021-06-01-14:47} and \eqref{equ:2021-05-01-10:38}--\eqref{equ:2021-05-01-10:40} that
\[
\rann(a) \cap (1_R-e)R \subseteq \rann(b)
\quad\text{and}\quad
eR \oplus \rfix(a) \subseteq \rfix(b)
\]
and
\[
eR \subseteq \rann(c)
\quad\text{and}\quad
(\rann(a) \cap (1_R-e)R) \oplus \rfix(a) \subseteq \rfix(c),
\]
where, in particular, we have used that if $\mathfrak h$, $\mathfrak i$, and $\mathfrak j$ are ideals of $R$ with $\mathfrak i \cup \mathfrak j \subseteq \mathfrak h$, then $\mathfrak i + \mathfrak j \subseteq \mathfrak h$.

It remains to see that $\rann(c) \subseteq eR$. For, let $x \in \rann(c)$. Since $R = eR \oplus (1_R-e)R$, we can write $x = y + z$, where $y \in eR$ and $z \in (1_R-e)R$. Then $y = eu$ and $z = (1_R-e)v$ for some $u, v \in R$, which implies, by Eq.~\eqref{equ:2021-05-01-10:35}, that $(1_R-e)y = ay = 0_R$ and hence $cy = 0_R$. As a result, 
\[
0_R = cx = cz = (1_R-e)z + eaz = (1_R-e)^2v + eaz = (1_R-e)v + eaz = z + eaz.
\]
This, however, is only possible if $z = eaz = 0_R$, because $eR \cap (1_R-e)R = \{0_R\}$ and, on the other hand, $z \in (1_R-e)R$ and $eaz \in eR$. Therefore, we obtain $x = y+z = y \in eR$ (as wished).

\vskip 0.05cm

\ref{lem:2021-05-01-11:43(iv)}: Suppose $eR = \rann(a)$, and pick $x \in \mathfrak p$ and $y \in \mathfrak q$. By the very definition of $\mathfrak p$ and $\mathfrak q$ (and, more specifically, by the fact that $\mathfrak p$ is contained in $(1_R-e)R$), there then exist $\bar{x}, \bar{y} \in R$ with $x = (1_R-e)\bar{x}$ and $y = z+az$, where $z := (1_R-e)\bar{y} \in \mathfrak p$.
Moreover, $a^2x = a^2z = 0_R$ (because $\mathfrak p$ is also contained in the right annihilator of $a^2$) and hence $ax, az \in \rann(a) = eR$, implying that $ax = eu$ and $az = ev$ for some $u, v \in R$. Consequently, a simple calculation shows that
\[
bx = ex + (1_R - e)ax = e(1_R-e)\bar{x} + (1_R - e)eu = 0_R + 0_R = 0_R;
\]
and on the other hand, we have 
\[
(1_R-e)z = (1_R-e)^2 \bar{y} = (1_R-e)\bar{y} = z
\quad\text{and}\quad
eaz = e^2v = ev = az,
\]
which, in turn, yields  
\[
cy = (1_R-e)z + (1_R-e)az + eaz + ea^2z = z + 0_R + az + 0_R = y.
\]
We thus see that $\mathfrak p \subseteq \rann(b)$ and $\mathfrak q \subseteq \rfix(c)$; and by Lemma \ref{lem:2021-04-29-15:28}\ref{lem:2021-04-29-15:28(ii)} and the ob\-ser\-va\-tion already made on the last line of the proof of items \ref{lem:2021-04-03-12:06(ii)} and \ref{lem:2021-04-03-12:06(iii)}, we conclude that $\mathfrak{q} \oplus \rfix(a) \subseteq \rfix(c)$.
\end{proof}

The lemmas we have hitherto proved make it already possible to put non-trivial constraints on the $\rfix_R$-irreducibles and $\rfix_R$-quarks of the monoid of left zero divisors of $R$. Sharper conditions will be obtained in Sect.~\ref{subsec:characterizations} (in particular, see Theorem \ref{thm:rFix-quarks-in-right-rickart}).

\begin{proposition}
\label{prop:4.6}
Let $H$ be the monoid of left zero divisors of a ring $R$ and $a$ be an element of $R$ such that $\rann(a)$ is a direct summand of $R_R$. The following hold:
\begin{enumerate}[label=\textup{(\roman{*})}]
\item\label{prop:4.6(i)} If $a$ is an $\rfix_R$-irreducible of $H$,
then $\rann(a)$ is an indecomposable submodule of $R_R$ with the additional property that $\rann(a) = \rann(a^2)$ and $\rann(a) \cap aR = \{0_R\}$.
\item\label{prop:4.6(ii)} If $a$ is an $\rfix_R$-quark of $H$, then $a$ is a coprimitive idempotent of $R$.
\end{enumerate}

\end{proposition}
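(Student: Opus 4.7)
The common starting point is Lemma \ref{lem:2021-05-05-11:04}, which, using that $\rann(a)$ is a direct summand, produces an idempotent $e \in R$ with $\rann(a) = eR$ and $\rfix(a) \subseteq (1_R-e)R$; since in both items $a$ is an $\rfix_R$-non-unit of $H$ and $1_R$ is the unique $\rfix_R$-unit of $H$, $a$ is a genuine left zero divisor and hence $e \ne 0_R$. Lemma \ref{lem:2021-04-29-12:26} then delivers the key factorization $a = bc$ with $b := e + (1_R-e)a$, $c := 1_R-e+ea$, and $c$ idempotent, and both assertions are read off of this factorization, using along the way that any proper idempotent of $R$ is singular (see Sect.~\ref{sec:monoids}) and hence lies in $H$.

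For part \ref{prop:4.6(i)}, I argue $\rann(a) = \rann(a^2)$ by contradiction: failure makes the right ideals $\mathfrak p$, $\mathfrak q$ of Lemma \ref{lem:2021-04-29-12:26}\ref{lem:2021-05-01-11:43(iv)} non-zero (by Lemma \ref{lem:2021-04-29-15:28}\ref{lem:2021-04-29-15:28(iii)} applied with $\mathfrak i := eR$ and $\mathfrak j := (1_R-e)R$), placing $b, c$ in $H$ with $\rfix(a) \subsetneq \rfix(b)$ and $\rfix(a) \subsetneq \rfix(c)$ (noting that $c \ne 1_R$, since $ea = e$ would put the non-zero idempotent $e$ in $\rfix(a) \cap \rann(a) = \{0_R\}$), so $b, c \prec a$, contradicting the $\rfix_R$-irreducibility of $a$. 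The inclusion $\rann(a) \cap aR \subseteq \{0_R\}$ is then immediate: $ay \in \rann(a)$ forces $y \in \rann(a^2) = \rann(a)$ and hence $ay = 0_R$. For indecomposability, if $\rann(a) = \mathfrak i_1 \oplus \mathfrak i_2$ with both summands non-zero, Proposition \ref{prop:2.2}\ref{prop:2.2(ii)} splits $e$ as a sum of orthogonal non-zero idempotents $e_1+e_2$ with $\mathfrak i_k = e_k R$; rerunning the previous construction with $e_1$ in place of $e$, the non-zero element $e_2 \in \rann(a) \cap (1_R-e_1)R$ witnesses $e_2 \in \rann(b_1)$ and $\rfix(c_1) \supsetneq \rfix(a)$, so $b_1, c_1 \in H$ and $b_1, c_1 \prec a$, contradicting irreducibility once more.

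For part \ref{prop:4.6(ii)}, every $\rfix_R$-quark is an $\rfix_R$-irreducible, so \ref{prop:4.6(i)} already gives indecomposability of $\rann(a)$; by Proposition \ref{prop:caratterizzazione-idemps} it remains only to show $aR = \rfix(a)$. With $b, c$ as above, $c$ is a proper idempotent (again because $ea = e$ would force $e = 0_R$), and in particular an $\rfix_R$-non-unit of $H$; moreover $\rfix(a) \subseteq \rfix(c)$, thanks to Lemma \ref{lem:2021-04-29-12:26}\ref{lem:2021-04-03-12:06(iii)} together with $eR \cap (1_R-e)R = \{0_R\}$. The quark hypothesis then forbids $c \prec a$, forcing $\rfix(c) = \rfix(a)$, whence $cR = \rfix(a)$ by Proposition \ref{prop:caratterizzazione-idemps}. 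Finally, for $x \in \rfix(a) \subseteq (1_R-e)R$ we have $ex = 0_R$ and $ax = x$, so $bx = ex + (1_R-e)ax = (1_R-e)x = x$, and therefore $aR = bcR = b\rfix(a) = \rfix(a)$, as desired. The main technical obstacle is the bookkeeping in part \ref{prop:4.6(i)}: verifying that $b$ and $c$ both lie in $H$ \emph{and} are strictly $\rfix_R$-below $a$, and arranging that the failure of each conclusion supplies a non-zero witness in $\mathfrak p$, $\mathfrak q$, or $e_2R$; part \ref{prop:4.6(ii)} is then a ``minimality'' upgrade in which the quark hypothesis collapses the inclusion $\rfix(a) \subseteq \rfix(c)$ to an equality and lets the explicit form of $b$ close the loop.
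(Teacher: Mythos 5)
Your proposal is correct and, for item \ref{prop:4.6(i)}, follows essentially the paper's own argument: the same auxiliary elements $b = e + (1_R-e)a$ and $c = 1_R-e+ea$ from Lemma \ref{lem:2021-04-29-12:26}, the same use of Lemma \ref{lem:2021-04-29-15:28} to manufacture non-zero witnesses in $\mathfrak p$ and $\mathfrak q$ when $\rann(a) \ne \rann(a^2)$, and the same reduction of the claim $\rann(a) \cap aR = \{0_R\}$ to the equality $\rann(a)=\rann(a^2)$. Your only cosmetic deviation is to produce the idempotent for the indecomposability step via Proposition \ref{prop:2.2}\ref{prop:2.2(ii)} (splitting $e=e_1+e_2$) rather than via Lemma \ref{lem:2021-04-29-15:28}\ref{lem:2021-04-29-15:28(i)} combined with Proposition \ref{prop:2.2}\ref{prop:2.2(i)}; your explicit witness $e_2 \in \rann(a)\cap(1_R-e_1)R$ plays exactly the role of the paper's ideal $\mathfrak n$. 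For item \ref{prop:4.6(ii)} your route is mildly but genuinely different: the paper does not invoke the factorization $a=bc$ there at all; it takes the idempotent $e_0$ of Lemma \ref{lem:2021-05-05-11:04}, observes $\rfix(a)\subseteq(1_R-e_0)R=\rfix(1_R-e_0)$, lets the quark hypothesis force equality, and concludes $R=\rann(a)\oplus\rfix(a)$, so that Proposition \ref{prop:caratterizzazione-idemps} finishes. You instead use $c$ as the competitor, force $\rfix(c)=\rfix(a)=cR$, and then compute $aR=b(cR)=b(\rfix(a))=\rfix(a)$; this is valid and gives a slightly more ``constructive'' identification of $aR$, at the cost of the extra verification that $b$ fixes $\rfix(a)$ pointwise (which is anyway Lemma \ref{lem:2021-04-29-12:26}\ref{lem:2021-04-03-12:06(ii)}). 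One small repair: in item \ref{prop:4.6(i)} you justify $c\ne 1_R$ by saying that $ea=e$ would put $e$ in $\rfix(a)\cap\rann(a)$; but $\rfix(a)=\{x\in R\colon ax=x\}$, so $ea=e$ does not place $e$ in $\rfix(a)$. The conclusion is still true: your own later computation $e=e^2=(ea)e=e(ae)=0_R$ is the right justification, and in any case Lemma \ref{lem:2021-04-29-12:26}\ref{lem:2021-04-03-12:06(iii)} gives $\rann(c)=eR\ne\{0_R\}$, which already shows that $c$ is a left zero divisor and hence $c\ne 1_R$ for free.
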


\begin{proof}
\ref{prop:4.6(i)}: Let $a$ be an $\rfix_R$-irreducible of $H$. Then $a$ is a left zero divisor of $R$. Since $\rann(a)$ is a direct summand of $R_R$ (by hypothesis), we get from Proposition \ref{prop:2.2}\ref{prop:2.2(i)} that
there are an idempotent $e \in \allowbreak R$ and a right ideal $\mathfrak l \subseteq R$ such that
\begin{equation}\label{equ:2021-06-03-16:40}
R = \rann(a) \oplus \mathfrak l
\quad\text{and}\quad
\rann(a) = eR \ne \{0_R\}.
\end{equation}
We divide the rest of the proof into two parts. First, we prove $\rann(a)$ is an indecomposable submodule of $R_R$. Next, we check $\rann(a) = \rann(a^2)$. This will suffice to prove the statement, for it implies at once that $\rann(a)$ and $aR$ intersect trivially. (If $y \in \rann(a) \cap aR$, then $ay = 0_R$ and $y = az$ for some $z \in \allowbreak R$, so that $0_R = ay = a^2z$. It follows that $z \in \rann(a^2) = \rann(a)$ and hence $y = az = 0_R$.)

\vskip 0.05cm 

\textsc{Part 1:} 
Assume to the contrary that $\rann(a)$ is not an indecomposable submodule of $R_R$. Since $\rann(a) \ne \{0_R\}$, it follows that $\rann(a) = \mathfrak g \oplus \mathfrak h$ for some non-zero right ideals $\mathfrak g, \mathfrak h \subseteq R$, which, by Eq.~\eqref{equ:2021-06-03-16:40}, yields $R = \mathfrak g \oplus \mathfrak h \oplus \mathfrak l$. In view of Lemma \ref{lem:2021-04-29-15:28}\ref{lem:2021-04-29-15:28(i)} (applied with $\mathfrak i = \mathfrak g$ and $\mathfrak j = \mathfrak h \oplus \mathfrak l$) and Prop\-o\-si\-tion \ref{prop:2.2}\ref{prop:2.2(i)}, there then exists an idempotent $f \in R$ such that 
\[
\{0_R\} \ne \mathfrak g = fR \subsetneq \rann(a)
\quad\text{and}\quad
\mathfrak n := \rann(a) \cap (1_R - f)R \ne \{0_R\}. 
\]
Consequently, we conclude from Lemma \ref{lem:2021-04-29-12:26} that $a = bc$ for some $b, c \in R$ with 
\[
\{0_R\} \ne \mathfrak n \subseteq \rann(b)
\quad\text{and}\quad
\{0_R\} \ne fR = \rann(c)
\]
and
\[
\rfix(a) \subsetneq fR \oplus \rfix(a) \subseteq \rfix(b)
\quad\text{and}\quad
\rfix(a) \subsetneq \rfix(a) \oplus \mathfrak n \subseteq \rfix(c).
\]
This however means that $b$ and $c$ are left zero divisors of $R$ each of which is (strictly) smaller than $a$ with respect to the $\rfix_R$-preorder on $H$. Therefore, $a$ is not an $\rfix_R$-irreducible of $H$ (which is absurd), because the only $\rfix_R$-unit of $H$ is the identity $1_H$ (which is not a left zero divisor of $R$). 

\vskip 0.05cm 

\textsc{Part 2:} Suppose for the sake of a contradiction that $\rann(a) \ne \rann(a^2)$ and hence $\rann(a) \subsetneq \rann(a^2)$. Then we get from items \ref{lem:2021-04-29-15:28(ii)} and \ref{lem:2021-04-29-15:28(iii)} of Lemma \ref{lem:2021-04-29-15:28} (applied with $\mathfrak i = eR$ and $\mathfrak j = (1_R - e)R$) that 
\[
\mathfrak{p} := \rann(a^2) \cap (1_R - e)R
\quad\text{and}\quad
\mathfrak{q} := \{x+ax \colon x \in \mathfrak p\}
\]
are non-zero right ideals of $R$. So, we are guaranteed by Lemma \ref{lem:2021-04-29-12:26} that $a = bc$ for some $b, c \in R$ with 
\[
\{0_R\} \ne \mathfrak p \subseteq \rann(b)
\quad\text{and}\quad
\{0_R\} \ne \rann(a) = \rann(c)
\]
and
\[
\rfix(a) \subsetneq \rann(a) \oplus \rfix(a) \subseteq \rfix(b)
\quad\text{and}\quad
\rfix(a) \subsetneq \mathfrak q \oplus \rfix(a) \subseteq \rfix(c).
\]
But similarly as in \textsc{Part 1}, this entails that $a$ is not an $\rfix_R$-irreducible of $H$ (which is absurd).

\vskip 0.05cm

\ref{prop:4.6(ii)}: 
Assume by way of contradiction that $a$ is an $\rfix_R$-quark of $H$ but not a coprimitive i\-dem\-po\-tent of $R$. Then $a$ is an $\rfix_R$-irreducible of $H$ (see the comments after Definition~\ref{def:2021-06-01-23:10}); and by item \ref{prop:4.6(i)}, this is only possible if $\rann(a)$ is an indecomposable submodule of $R_R$. On the other hand, we have from Eq.~\eqref{equ:2021-06-03-16:40} and Lemma \ref{lem:2021-05-05-11:04} that $\rann(a) = e_0R \ne \{0_R\}$ and $\rfix(a) \subsetneq (1_R - e_0)R$ for a certain idempotent $e_0 \in R$. In particular, 
$\rfix(a)$ is not equal to $(1_R - e_0)R$, or else we would gather from the above and Proposition \ref{prop:caratterizzazione-idemps} that $a$ is a coprimitive idempotent (which is absurd). So, $e_0$ is a non-zero element of $R$ and we conclude from Eq.~\eqref{equ:ann&fix-of-idempotent} that $1_R-e_0$ is a zero divisor of $R$ with $\rfix(a) \subsetneq \allowbreak (1_R - e_0)R = \allowbreak \rfix(1_R-e_0) \subsetneq R$, contradicting that $a$ is an $\rfix_R$-quark of $H$.
\end{proof}

There is more that we can gain from the last proof, but we will come back to this point in Remark \ref{rem:5.3}, after proving in Theorem \ref{thm:rFix-quarks-in-right-rickart} that, for a right Rickart ring, the conclusions of Proposition \ref{prop:4.6}\ref{prop:4.6(ii)} can be turned into an ``if and only if'' statement.
\section{Idempotent factorizations in right Rickart rings} 
\label{subsec:characterizations}

We recall from Sect.~\ref{sec:2.3} that a ring $R$ is a right Rickart ring if $\rann(a)$ is a direct summand of the regular right module $R_R$ for every $a \in R$. Some remarks are in order before proceeding.
\begin{remarks}\label{rem:right-rickart-rings}
\begin{enumerate*}[label=\textup{(\arabic{*})}]
\item\label{rem:right-rickart-rings(1)} First considered by S.~Maeda \cite{Ma60b} and A.~Hattori \cite{Hat60} and known also as ``principal projective rings'' or ``p.p.~rings'' (see \cite[Sect.~1]{Lee-TaRi-Ro2010} for further historical details), 
right Rickart rings form a wide class of rings, including von Neumann regular rings and, more generally, right semi-hereditary rings,
see \cite[Corollary 1.2(d)]{Go79} and the comments on the bottom of \cite[p.~261]{Lam99}. In addition, we gather from \cite[Proposition 3.2]{Lee-TaRi-Ro2010} that the same class also includes the endo\-mor\-phism ring of a \evid{Rickart} (\evid{right}) \evid{module}, by which we mean a (right) module $M$ where the kernel of each endo\-mor\-phism is a direct summand of $M$, see \cite[Definition 2.2]{Lee-TaRi-Ro2010}. Most notably, we have from \cite[Theorem 3.6]{Lee-TaRi-Ro2012} that every finitely generated, projective (or free) right module $M$ over a right semi-hereditary ring is a Rickart module.\\[0.05cm]

\indent\item\label{rem:right-rickart-rings(2)} If $R$ is a right Rickart ring and $\rfix(a) \subseteq \rfix(b)$ for some $a, b \in R$, then we get from Eq.~\eqref{equ:r.fix-in-a-ring} that each of $\rfix(a)$ and $\rfix(b)$ is a principal right ideal of $R$ generated by an idempotent, which, by Proposition \ref{prop:2.2}\ref{prop:2.2(iv)}, implies the existence of a right ideal $\mathfrak j \subseteq R$ such that $\rfix(v) = \mathfrak j \oplus \rfix(u)$.\\[0.05cm]
\indent{}\item\label{rem:right-rickart-rings(5)}
Let $R$ be a ring. The ring $T_n(R)$ of $n$-by-$n$ upper triangular matrices over $R$ is finitely generated as a left (resp., right) $R$-module; it thus follows from \cite[Proposition (1.21)]{Lam01} (and its right analogue) and the Hopkins-Levitzki theorem that, if $R$ is an artinian ring, then $T_n(R)$ is artinian and hence noetherian. On the other hand, we gather from \cite[Example (2.36)]{Lam99} that, if $R$ is a skew field, then $T_n(R)$ is a hereditary ring and hence, by Remark \ref{rem:right-rickart-rings}\ref{rem:right-rickart-rings(1)}, a right Rickart ring. Yet, $T_n(R)$ is not a von Neumann regular ring for $n \ge 2$. For, if $A \in T_n(R)$ is a matrix whose $(1,n)$-entry is non-zero and all of whose other entries are zero, then $AXA = 0_{T_n(R)} \ne A$ for every $X \in T_n(R)$.
\end{enumerate*}
\end{remarks}

The role of right Rickart rings in our approach to the study of idempotent factorizations should be already clear from the centrality of right annihilators in the results of Sect.~\ref{sec:2.1}, and it will become even clearer (we hope) with the next theorem.

\begin{theorem}\label{thm:rFix-quarks-in-right-rickart}
Let $H$ be the monoid of left zero divisors of a right Rickart ring $R$. An element $a \in R$ is an $\rfix_R$-quark of $H$ if and only if $a$ is a coprimitive idempotent. 
\end{theorem}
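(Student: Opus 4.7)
The forward implication is essentially already proved: because $R$ is right Rickart, $\rann(a)$ is automatically a direct summand of $R_R$ for every $a \in R$, so Proposition~\ref{prop:4.6}\ref{prop:4.6(ii)} yields directly that every $\rfix_R$-quark of $H$ is a coprimitive idempotent of $R$. The genuine content of the theorem is thus the converse, which I plan to establish by contradiction.

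Suppose $a$ is a coprimitive idempotent. First I would dispatch the routine membership checks: since $\rann(a)$ is indecomposable it is non-zero, so $a \ne 1_R$ (making $a$ an $\rfix_R$-non-unit) and $a$ is a left zero divisor of $R$ (so $a \in H$). For the core of the argument, I would assume for contradiction the existence of an $\rfix_R$-non-unit $b \in H$ with $\rfix(a) \subsetneq \rfix(b) \subsetneq R$, and work to force a non-trivial decomposition of $\rann(a)$ as a direct sum of two non-zero submodules. Because $R$ is right Rickart, $\rfix(b) = \rann(1_R - b)$ is a direct summand of $R_R$, so I can pick an additive complement $\mathfrak n$ of $\rfix(b)$ in $R$, which is non-zero because $\rfix(b) \subsetneq R$. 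Using the decomposition $R = aR \oplus \rann(a)$ from Proposition~\ref{prop:caratterizzazione-idemps}, every $y \in \rfix(b)$ splits as $y = ay + (1_R - a)y$, with $ay \in aR = \rfix(a) \subseteq \rfix(b)$, which forces $(1_R - a)y \in \rfix(b) \cap \rann(a)$. Setting $\mathfrak m := \rfix(b) \cap \rann(a)$, this yields $\rfix(b) = aR \oplus \mathfrak m$, and $\mathfrak m \ne \{0_R\}$ because the inclusion $\rfix(a) \subsetneq \rfix(b)$ is strict. Hence $R = aR \oplus \mathfrak m \oplus \mathfrak n$, so $\mathfrak m \oplus \mathfrak n$ is a second additive complement of $aR$ in $R$, and the projection of $R$ onto $\rann(a)$ along $aR$ restricts to an $R$-module isomorphism $\mathfrak m \oplus \mathfrak n \to \rann(a)$. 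Transporting the non-trivial direct-sum decomposition of $\mathfrak m \oplus \mathfrak n$ through this isomorphism exhibits $\rann(a)$ as a direct sum of two non-zero submodules, contradicting coprimitivity.

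The main subtlety, and the point I would be most careful about, is that the idempotent $e$ provided by Rickartness (with $\rfix(b) = eR$) has no canonical relationship to $a$: in general $\rann(e) = (1_R - e)R$ and $\rann(a) = (1_R - a)R$ are distinct submodules of $R$, so one cannot split $\rann(a)$ directly along any decomposition coming from $e$. The argument above sidesteps this by working with $\rann(a)$ only up to $R$-module isomorphism and realizing it via the projection onto the additive complement $\mathfrak m \oplus \mathfrak n$ of $aR$ in $R$.
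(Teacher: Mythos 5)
Your proposal is correct and follows essentially the same route as the paper: the forward direction is delegated to Proposition~\ref{prop:4.6}\ref{prop:4.6(ii)}, and the converse proceeds by contradiction, splitting $R$ as $aR$ plus two non-zero complements ($\mathfrak m$ inside $\rfix(b)$ and $\mathfrak n$ outside it) and concluding that $\rann(a)$ decomposes non-trivially. The only cosmetic differences are that you build the complement of $\rfix(a)$ in $\rfix(b)$ explicitly as $\rfix(b) \cap \rann(a)$ using the idempotency of $a$ (the paper instead invokes Remark~\ref{rem:right-rickart-rings}\ref{rem:right-rickart-rings(2)}), and you realize the isomorphism $\mathfrak m \oplus \mathfrak n \cong \rann(a)$ by an explicit projection where the paper cites an exercise from Anderson--Fuller.
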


\begin{proof}
The ``only if'' part is immediate from Proposition \ref{prop:4.6}\ref{prop:4.6(ii)}.
For the ``if'' part, let $a$ be a coprimitive idempotent of $R$ and suppose by way of contradiction that $a$ is not an $\rfix$-quark of $H$ (note that every idempotent of $R$ is an element of $H$). There then exists $b \in H$ with $b \ne 1_R$ such that $\rfix(a) \subsetneq \rfix(b)$, and so we see from Remark \ref{rem:right-rickart-rings}\ref{rem:right-rickart-rings(2)} that $R = \mathfrak i \oplus \rfix(b)$ and $\rfix(b) = \mathfrak j \oplus \rfix(a)$ for some non-zero right ideals $\mathfrak i, \mathfrak j \subseteq R$. In particular, $\mathfrak i$ is non-zero because $b$ is a left zero divisor of $R$ and hence $\rfix(b)$ is properly contained in $R$.
It thus follows by Proposition \ref{prop:caratterizzazione-idemps} that
\begin{equation*}\label{equ:2021-06-16-19:00}
\rann(a) \oplus \rfix(a) = R = \mathfrak i \oplus \rfix(b) = \mathfrak i \oplus \mathfrak j \oplus \rfix(a).
\end{equation*}
But this implies by \cite[Exercise 5.4(1), p.~76]{An-Fu92} that $\rann(a)$ is isomorphic, as a right $R$-module, to $\mathfrak i \oplus \mathfrak j$, which is impossible because $\rann(a)$ is an indecomposable submodule of $R_R$ (by the hypothesis that $a$ is a coprimitive idempotent) and, on the other hand, $\mathfrak i$ and $\mathfrak j$ are both non-zero right ideals of $R$.
\end{proof}

\begin{remark}\label{rem:5.3}
Let $H$ be the monoid of left zero divisors of a right Rickart ring $R$ and $\preceq$ be the restriction to $H$ of the $\rfix$-preorder on (the multiplicative monoid of) $R$, and pick $a \in H$. Since $R$ is right Rickart, $\rann(a)$ is a direct summand of $R_R$. So, we obtain from Proposition \ref{prop:4.6}\ref{prop:4.6(i)} that $a$ is a $\preceq$-irreducible of $H$ only if $\rann(a)$ is an indecomposable submodule of $R_R$ with $\rann(a) = \rann(a^2)$. If, on the other hand, $\rann(a) = \mathfrak g \oplus \mathfrak h$ for some non-zero right ideals $\mathfrak g, \mathfrak h \subseteq R$ such that $\mathfrak g$ is an in\-de\-com\-pos\-a\-ble $R$-module (which, by \cite[Corollary (6.7)(1)]{Lam99}, is guaranteed when $R$ is a right noetherian ring) or $\rann(a)$ is an indecomposable $R$-module with $\rann(a) \ne \rann(a^2)$, then the proof of the same Proposition \ref{prop:4.6}\ref{prop:4.6(i)} shows that $a$ factors as the product of two left zero divisors $b, c \in R$ with $b \prec a$ and $c \prec a$ such that $\rann(c)$ is an indecomposable $R$-module; that is, $c$ is a coprimitive idempotent of $R$ and hence a $\preceq$-quark (Theorem \ref{thm:rFix-quarks-in-right-rickart}), which gives in turn that $\hgt(b) + \hgt(c) \leq \hgt(a)$, where $\hgt(\cdot)$ is the $\preceq$-height (function) of $H$.
\end{remark}

The next remarks show that, in certain classes of rings, the monoid of left zero divisors coincides with the monoid of zero divisors, which makes Proposition \ref{prop:4.6} and Theorem \ref{thm:rFix-quarks-in-right-rickart} smoother.

\begin{remarks}\label{rem:left-sing-implies-sing}
\begin{enumerate*}[label=\textup{(\arabic{*})}]
\item\label{rem:left-sing-implies-sing(1)}
A ring $R$ is \evid{strongly $\pi$-regular} if there is no $a \in R$ with $a^{k+1} R \subsetneq a^k R$ for all $k \in \mathbb{N}^+$ (e.g., every left or right artinian ring is strongly $\pi$-regular). By the comments under the solution to Exercise 4.17 in \cite{Lam03}, the elements of a strongly $\pi$-regular ring are either units or zero divisors. In particular, this is true of the ring of $n$-by-$n$ upper triangular matrices with entries in a skew field (Remark \ref{rem:right-rickart-rings}\ref{rem:right-rickart-rings(5)}). \\[0.05cm]

\indent{}\item\label{rem:left-sing-implies-sing(2)}
Every non-unit element of a Dedekind-finite, von Neumann regular monoid $H$ is singular. In fact, let $x \in H$ be a non-unit. Since $H$ is von Neumann regular, we have $x = xyx$ for some $y \in H$. Therefore, if $x$ is left- or right-cancellative (namely, $x$ is not a zero divisor), then $xy = 1_H$ or $yx = 1_H$, which, by the Dedekind-finiteness of $H$, implies that $x$ is a unit (which is absurd).\\[0.05cm]
\indent{}\item\label{rem:left-sing-implies-sing(3)} A (right) module $M$ is said to be \evid{hopfian} if every surjective endomorphism of $M$ is an automorphism. Consequently, an element $f$ in the endomorphism ring of a hopfian module is a zero divisor if and only if $f$ is a \emph{left} zero divisor, if and only if $f$ is a non-injective map.
Most notably, we have from (the right analogue of) \cite[Exercise 1.12(1)]{Lam03} that every noetherian module $M$ is hopfian, and by (the right analogue of) \cite[Proposition (1.21)]{Lam01}, this is especially the case when $M$ is a finitely generated module over a right noetherian ring.
\end{enumerate*}
\end{remarks}

We continue by proving that, in a right Rickart ring $R$ such that the regular right module $R_R$ has finite uniform dimension, the $\rfix_R$-preorder is strongly artinian.

\begin{proposition}\label{prop:rickart-strongly-fix-artinian}
Let $R$ be a right Rickart ring with finite uniform dimension. Then the $\rfix_R$-height of an element $a \in R$ relative to the mul\-ti\-pli\-ca\-tive monoid of $R$ is finite and bounded above by $\udim(R_R) - \rdim_R(\rfix(a))$. In particular, every multiplicative sub\-monoid of $R$ is strongly $\rfix_R$-artinian.
\end{proposition}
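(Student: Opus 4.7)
The plan is to translate a strictly $\preceq$-decreasing sequence of $\preceq$-non-units of the multiplicative monoid of $R$ (with $\preceq$ denoting the $\rfix_R$-preorder) into a strictly ascending chain of right ideals of $R$, each of which is a direct summand of $R_R$, and then to bound the length of such a chain via the additivity and monotonicity of the right uniform dimension.

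First I would record that, since $\rfix(x) = R$ if and only if $x = 1_R$, the identity is the only $\rfix_R$-unit of the multiplicative monoid of $R$. Then, given $a \in R$, I take an arbitrary sequence $a_1, \ldots, a_n$ of $\rfix_R$-non-units with $a_1 = a$ and $a_{k+1} \prec a_k$ for each $k \in \llb 1, n-1 \rrb$. Unfolding the definition of the $\rfix_R$-preorder yields the strictly ascending chain
\[
\rfix(a_1) \subsetneq \rfix(a_2) \subsetneq \cdots \subsetneq \rfix(a_n) \subsetneq R,
\]
where the last strict inclusion records that $a_n$ is a $\rfix_R$-non-unit. Because $\rfix(x) = \rann(1_R - x)$ for every $x \in R$ (Eq.~\eqref{equ:r.fix-in-a-ring}) and $R$ is right Rickart, every term of this chain is a direct summand of $R_R$.

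The crux of the argument is the observation that if $\mathfrak i \subsetneq \mathfrak j$ are two direct summands of $R_R$, then $\rdim_R(\mathfrak i) < \rdim_R(\mathfrak j)$: I would write $\mathfrak i = eR$ and $\mathfrak j = fR$ for suitable idempotents $e, f \in R$ by Proposition \ref{prop:2.2}\ref{prop:2.2(i)}, then use Proposition \ref{prop:2.2}\ref{prop:2.2(iv)} to produce a non-zero idempotent $f_0 \in R$ with $fR = eR \oplus f_0 R$, and finally invoke the additivity in Eq.~\eqref{equ:2021-04-28-23:55} to conclude that $\rdim_R(\mathfrak j) = \rdim_R(\mathfrak i) + \rdim_R(f_0R) \ge \rdim_R(\mathfrak i) + 1$. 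Iterating this along the chain displayed above, and applying it once more to the pair $\rfix(a_n) \subsetneq R$, gives
\[
\udim(R_R) \ge \rdim_R(\rfix(a_n)) + 1 \ge \rdim_R(\rfix(a)) + n,
\]
whence $n \le \udim(R_R) - \rdim_R(\rfix(a))$. As this bound is independent of the particular sequence, it is also an upper bound on the $\rfix_R$-height of $a$ relative to the multiplicative monoid of $R$.

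For the ``in particular'' clause, I would simply invoke Remark \ref{rem:height}\ref{rem:height(1)}: since $1_R$ is the only $\rfix_R$-unit of the full multiplicative monoid, the $\rfix_R$-height of any element of a submonoid $K$ is no larger than its height relative to $R$, which we have just shown to be finite. I do not anticipate any serious obstacle: the only point requiring a little care is the lemma on strict drops in uniform dimension, and even that is essentially immediate once Proposition \ref{prop:2.2} and the additivity of $\rdim$ are in hand.
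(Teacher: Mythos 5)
Your proposal is correct and follows essentially the same route as the paper: both translate a strictly $\prec$-decreasing sequence into a strictly ascending chain of direct summands $\rfix(a_1) \subsetneq \cdots \subsetneq \rfix(a_n) \subsetneq R$, use Proposition~\ref{prop:2.2} to realize each strict inclusion as a direct sum with a non-zero idempotent-generated complement, and invoke Eq.~\eqref{equ:2021-04-28-23:55} to bound $n$ by $\udim(R_R) - \rdim_R(\rfix(a))$. The only (immaterial) difference is that you iterate a one-step ``strict drop in $\rdim$'' lemma, whereas the paper assembles the full decomposition $R = \rfix(a) \oplus \mathfrak i_1 \oplus \cdots \oplus \mathfrak i_n$ and applies additivity once.
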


\begin{proof}
Set $N := \udim(R_R)$ and fix $a \in R$. It is enough to check that $\alpha + h \leq N$, where $\alpha$ is the right uniform dimension of $\rfix(a)$ and $h$ is the $\rfix_R$-height of $a$ relative to the monoid $H := (R, \cdot\,)$. Since $N$ is finite (by hypothesis) and the only $\rfix_R$-unit of $H$ is the identity $1_R$, this will in fact prove that $H$ is a strong\-ly $\rfix_R$-artinian monoid and, by Remark \ref{rem:height}\ref{rem:height(1)}, so are also the submonoids of $H$.

To start with, it is clear from Eq.~\eqref{equ:r.dim-monotone} that $\alpha + h \leq N$ when $a$ is an $\rfix_R$-unit of $H$ and hence $h = \allowbreak 0$. Therefore, we assume from now on that $a$ is an $\rfix_R$-non-unit. It follows that either $h \in \mathbb{N}^+$ or $h = \infty$. Accordingly, pick an integer $n$ in the (non-empty) interval $\llb 1, h \rrb$. By Remark \ref{rem:height}\ref{rem:height(2)} and Definition~\ref{def:r.fix-preorder} (applied to the $\rfix_R$-preorder on $H$), there then exist $x_1, \ldots, x_n \in R$ with $x_1 = a$ and $\rfix(x_k) \subsetneq \allowbreak \rfix(x_{k+1}) \subsetneq \allowbreak R$ for every $k \in \llb 1, n-1 \rrb$ (here again we use that the only $\rfix_R$-unit of $H$ is $1_R$). So, letting $x_{n+1} := 1_R$, we get from Remark \ref{rem:right-rickart-rings}\ref{rem:right-rickart-rings(2)} that there are non-zero right ideals $\mathfrak i_1, \ldots, \mathfrak i_n$ of $R$ such that 
\[
\rfix(x_{k+1}) = \rfix(x_k) \oplus \mathfrak i_k,
\qquad \text{for each }k \in \llb 1, n \rrb.
\]
Consequently, we see (by induction) that $
R = \rfix(x_k) \oplus \mathfrak{i}_{k} \oplus \cdots \oplus \mathfrak{i}_n$ for all $k \in \llb 1, n \rrb$; in particular, 
\begin{equation}\label{equ:2021-06-29-17:29}
R = \rfix(x_1) \oplus \mathfrak i_1 \oplus \cdots \oplus \mathfrak i_n = \rfix(a) \oplus \mathfrak i_1 \oplus \cdots \oplus \mathfrak i_n.
\end{equation}
Since $\mathfrak{i}_1, \ldots, \mathfrak i_n$ are non-zero right ideals of $R$, we therefore conclude from Eqs.~\eqref{equ:2021-04-28-23:55} and \eqref{equ:2021-06-29-17:29} that
\[
N = \rdim_R(R) = \rdim_R(\rfix(a)) + \sum_{k=1}^n \rdim_R(\mathfrak i_k) \ge \rdim_R(\rfix(a)) + n = \alpha + n,
\]
which shows in turn that $\alpha + h \leq N$ (as wished), because $n$ was an arbitrary integer in $\llb 1, h \rrb$ (note, in par\-tic\-u\-lar, that $h$ must be finite).
\end{proof}

\subsection{Von Neumann regular rings}
\label{sec:VNR-case}
By Theorem \ref{thm:rFix-quarks-in-right-rickart} and Remark \ref{rem:left-sing-implies-sing}, the $\rfix$-quarks of the monoid of zero divisors of large classes of right Rickart rings are idempotent. Given a Rickart ring $R$, it is then natural to ask whether also the $\rfix_R$-irreducibles of $R^\#$ of some degree $s \ge 2$ are idempotent. In the present subsection, we address this question in the class of von Neumann regular rings and show that the answer is positive under certain conditions (Theorem \ref{thm:5.8}). We start with some general considerations.

\begin{remarks}\label{rem:5.6-VNR}
\begin{enumerate*}[label=\textup{(\arabic{*})}]
\item\label{rem:5.6-VNR(1)} Let $e$ and $f$ be idempotents of a ring $R$ for which there exist right $R$-linear maps $\alpha \colon eR \to fR$ and $\beta \colon fR \to eR$ with the property that $\beta(\alpha(e)) = e$, and set $z := \alpha(e)$ and $y := \beta(f)$. We claim that $z = fze$, $y = eyf$, and $yz = e$: Incidentally, this results in a generalization of Proposition 4 in \cite[Ch.~III, Sect.~7]{Ja64}, where $eR$ and $fR$ are rather assumed to be isomorphic (as right $R$-modules). \\[0.05cm]

\indent{}For the claim, it is clear that $z \in fR$ and $y \in eR$, which gives in turn that $z = fz$ and $y = ey$ (by the as\-sump\-tion that $e^2 = e$ and $f^2 = f$). Moreover, we have from the right $R$-linearity of $\alpha$ and $\beta$ that
\end{enumerate*} 
\[
ze = \alpha(e) e = \alpha(e^2) = \alpha(e) = z
\quad\text{and}\quad
yf = \beta(f) f = \beta(f^2) = \beta(f) = y.
\]
It follows that $z = fz = ze = fze$ and $y = ey = yf = eyf$; and since $\beta(\alpha(e)) = e$ (by hypothesis), it is then immediate to check that $yz = \beta(f) z = \beta(fz) = \beta(z) = \beta(\alpha(e)) = e$.
\vskip 0.05cm

\begin{enumerate*}[label=\textup{(\arabic{*})},resume]
\item\label{rem:5.6-VNR(2)} 
Let $R$ be a von Neumann regular ring, and let $a \in R$ be such that $\rann(a) \cap aR = \allowbreak \{0_R\}$. 
Since, by Remark \ref{rem:right-rickart-rings}\ref{rem:right-rickart-rings(1)}, $R$ is a right Rickart ring, $\rann(a)$ and $\rfix(a)$ are both direct summands of $R_R$, viz., $\rann(a) = eR$ and $\rfix(a) = fR$ for some idempotents $e, f \in R$. Then $\mathfrak i := \rann(a) + aR = \allowbreak eR \oplus \allowbreak aR$; in particular, $\mathfrak i$ is a finitely generated right ideal of $R$, which, by \cite[Theorem 1.1]{Go79}, implies that 
$R = \allowbreak \mathfrak i \oplus \allowbreak \mathfrak j = \rann(a) \oplus \allowbreak aR \oplus \allowbreak \mathfrak j$ for a certain right ideal $\mathfrak j \subseteq R$.
Therefore, $fR = \rfix(a) \subseteq aR = gR$ for some idempotent $g \in R$, and by Proposition \ref{prop:2.2}\ref{prop:2.2(iv)}, this proves that $aR = \allowbreak \rfix(a) \oplus g_0R$ for some other idempotent $g_0 \in R$. So, putting it all together, we see that there is a Peirce basis $(e_1, e_2, e_3)$ of $R$ with $\rann(a) = e_1R$, $\rfix(a) = e_2R$, and $aR \subseteq e_2R \oplus e_3R = (1_R-e_1) R$.
\end{enumerate*}
\end{remarks}

Following K.\,R.~Goodearl and D.~Handelman (see the unnumbered definition on p.~80 and the notes on p.~94 of \cite{Go79}), we say that a von Neumann regular $R$ satisfies the \evid{comparability axiom} if, for all $u, v \in R$, there is an injective homomorphism of (right) $R$-modules either from $uR$ to $vR$ or from $vR$ to $uR$.

\begin{theorem}\label{thm:5.8}
Let $a$ be an $\rfix_R$-irreducible of the monoid of zero divisors of a Dedekind-finite, von Neumann regular ring $R$ satisfying the comparability axiom. Then $a$ is a coprimitive i\-dem\-po\-tent.
\end{theorem}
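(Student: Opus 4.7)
By Remark~\ref{rem:right-rickart-rings}\ref{rem:right-rickart-rings(1)}, $R$ is right Rickart; by Remark~\ref{rem:left-sing-implies-sing}\ref{rem:left-sing-implies-sing(2)}, every non-unit of $R$ is singular, so the monoid of zero divisors of $R$ coincides with its monoid of left zero divisors. Proposition~\ref{prop:4.6}\ref{prop:4.6(i)} then gives that $\rann(a)$ is an indecomposable submodule of $R_R$, with $\rann(a) = \rann(a^2)$ and $\rann(a) \cap aR = \{0_R\}$. By Remark~\ref{rem:5.6-VNR}\ref{rem:5.6-VNR(2)}, there is a Peirce basis $(e_1, e_2, e_3)$ of $R$ with $\rann(a) = e_1R$, $\rfix(a) = e_2R$, $aR \subseteq (1_R - e_1)R$, and (since $R$ is VNR and $aR$ is a direct summand of $R_R$) $aR = \rfix(a) \oplus g_0R$ for some idempotent $g_0 \in R$ with $g_0R \subseteq e_3R$. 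By Proposition~\ref{prop:caratterizzazione-idemps}, $a$ is idempotent if and only if $g_0 = 0_R$, and in that case $\rann(a)$ being indecomposable automatically makes $a$ coprimitive; so it suffices to show $g_0 = 0_R$, and I argue by contradiction.

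Assume $g_0 \ne 0_R$. The $R$-linear isomorphism $R/\rann(a) \to aR$ sending $x + \rann(a) \mapsto ax$, combined with the Peirce decomposition of $R$ and with $aR = \rfix(a) \oplus g_0R$, yields $e_2R \oplus e_3R \cong e_2R \oplus g_0R$. A Dedekind-finite von Neumann regular ring satisfying the comparability axiom is unit-regular, so cancellation for cyclic projective modules holds, giving $e_3R \cong g_0R$; then, since $g_0R$ is a direct summand of $e_3R$ and direct finiteness prevents $e_3R$ from splitting off a non-zero summand while remaining isomorphic to itself, we get $g_0R = e_3R$. After replacing $g_0$ by another idempotent generating the same right ideal, we may take $g_0 = e_3$, so $e_1 g_0 = g_0 e_1 = 0_R$. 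Applying the comparability axiom next to the pair $(e_1R, e_3R)$, together with the indecomposability of $e_1R$ and the fact that every cyclic submodule of $e_1R$ is a direct summand of $R_R$ (by VNR), one obtains an $R$-module isomorphism $e_1R \cong e_3R$; Remark~\ref{rem:5.6-VNR}\ref{rem:5.6-VNR(1)} then supplies $y \in e_1Re_3$ and $z \in e_3Re_1$ with $yz = e_1$ and $zy = e_3$.

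The plan is now to use $y$ and $z$ to construct a factorization $a = bc$ violating the $\rfix_R$-irreducibility of $a$. Set $c := e_2 + e_3 + y$; a routine calculation based on the Peirce orthogonality and on the relations $y = e_1 y e_3$, $yz = e_1$, $zy = e_3$ (which force $y^2 = 0_R$ and $e_2 y = y e_1 = e_3 y = 0_R$) shows that $c^2 = c$, $\rann(c) = e_1R = \rann(a)$, and $cR = D$, where $D := \{r + x + yx : r \in e_2R,\; x \in e_3R\}$; this $D$ is a complement of $e_1R$ in $R$ \emph{distinct} from $aR$, as any nonzero $yx \in e_1R$-component of a $D$-element arises from a nonzero $x \in e_3R$. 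Now set $K := \{d - ad : d \in D\}$: expanding $d = r + x + yx$ and using $ar = r$, $a(yx) = 0_R$, $ax = a_{23} x + a_{33} x$, one finds $K = \{yx - a_{23}x + (e_3 - a_{33})x : x \in e_3R\}$, which is isomorphic to $e_3R$ as a right $R$-module (since $y$ is injective on $e_3R$) and meets $aR$ only at $0_R$; a dimension count then gives $K \oplus aR = R$, so one may define $b$ to be the idempotent projection of $R_R$ onto $aR$ along $K$. By construction, $b|_D$ is the map $d \mapsto ad$; and a direct check (writing $c(r) = r_2 + r_3 + y r_3$ and decomposing it as an element of $aR \oplus K$) yields $bc = a$.

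Finally, $\rann(c) = e_1R \ne \{0_R\}$ and $\rann(b) = K \ne \{0_R\}$ (the latter because $K \cong e_3R$ and $e_3 = g_0 \ne 0_R$), so by Remark~\ref{rem:left-sing-implies-sing}\ref{rem:left-sing-implies-sing(2)} both $b$ and $c$ are zero divisors of $R$; meanwhile $\rfix(c) = cR = D \supsetneq e_2R = \rfix(a)$ (as $e_3 + y \in D \setminus e_2R$) and $\rfix(b) = bR = aR = e_2R \oplus e_3R \supsetneq e_2R$. Thus $a$ factors as a product of two zero divisors each strictly $\rfix_R$-smaller than $a$, contradicting that $a$ is a degree-$2$ $\rfix_R$-irreducible of the monoid of zero divisors. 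The main obstacle, and the bulk of the argument, is the verification of the idempotency, annihilator, complement, and diagram-chase identities above; this is where the Peirce decomposition, the comparability-driven isomorphisms $g_0R = e_3R \cong e_1R$, and Dedekind-finiteness must all work in concert.
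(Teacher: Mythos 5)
Your overall strategy closely tracks the paper's: reduce to a Peirce basis $(e_1,e_2,e_3)$ with $\rann(a)=e_1R$ indecomposable, use comparability to produce elements $y\in e_1Re_3$ and $z\in e_3Re_1$ realizing a module isomorphism between $e_1R$ and $e_3R$, and then exhibit a factorization $a=bc$ with $c=1_R-e_1+y$ idempotent and both factors strictly $\rfix_R$-below $a$. The pieces that do not depend on the isomorphism $e_1R\cong e_3R$ (idempotency of $c$, $\rann(c)=e_1R$, $cR=D$, and $bc=a$ once $K\oplus aR=R$ is granted) check out, and the preliminary reduction to $aR=(1_R-e_1)R$ via unit-regularity and cancellation, while importing nontrivial results from Goodearl that the paper does not need, is legitimate.

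The genuine gap is the claim that $e_1R\cong e_3R$. Comparability gives an injection in one of two directions. If it goes $e_3R\hookrightarrow e_1R$, the image is a non-zero direct summand of the indecomposable module $e_1R$ and the isomorphism follows. But if it goes $e_1R\hookrightarrow e_3R$, you only learn that $e_1R$ is isomorphic to a direct summand of $e_3R$; indecomposability of $e_1R$ says nothing about that summand being all of $e_3R$, and nothing established up to that point rules out a proper summand. Concretely, in $R=\mathcal M_3(K)$ take $a$ with one-dimensional kernel and with $1_R-a$ invertible (e.g.\ eigenvalues $0,2,3$): then $\rann(a)$ is indecomposable, $\rann(a)=\rann(a^2)$, $\rann(a)\cap aR=\{0_R\}$, $\rfix(a)=\{0_R\}$, $g_0\ne 0_R$ and $aR=(1_R-e_1)R$, i.e.\ every constraint you have derived holds, yet $e_1R$ has uniform dimension $1$ while $e_3R=aR$ has uniform dimension $2$, so they are not isomorphic. (Such an $a$ is of course not $\rfix_R$-irreducible, but your derivation of $e_1R\cong e_3R$ invokes only comparability, indecomposability and regularity — not irreducibility — so it is an invalid inference.) The paper circumvents this by replacing $e_3$ with an idempotent generating the image of the embedding $e_1R\hookrightarrow e_3R$ and absorbing the complement into $e_2$, retaining only the inclusion $\rfix(a)\subseteq e_2R$; its construction of $b$ and $c$ is designed to work under that weaker hypothesis. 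Your construction instead leans on the exact equalities $\rfix(a)=e_2R$ and $aR=e_2R\oplus e_3R$, both in the computation of $K$ (where you use $ar=r$ for all $r\in e_2R$) and in the dimension count $K\oplus aR\cong e_3R\oplus e_2R\oplus e_3R\cong R$, so it does not survive the shrinking of $e_3$ without substantial modification.
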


\begin{proof}
Since $R$ is Dedekind-finite and von Neumann regular, we are guaranteed by Remark \ref{rem:left-sing-implies-sing}\ref{rem:left-sing-implies-sing(2)} that every left or right zero divisor of $R$ is a zero divisor. By Proposition \ref{prop:4.6}\ref{prop:4.6(i)} and Remark \ref{rem:right-rickart-rings}\ref{rem:right-rickart-rings(1)}, it follows that $\rann(a)$ is an indecomposable $R$-module with $\rann(a) \cap \allowbreak aR = \{0_R\}$, and it is left to see that $a = a^2$. 
To start with, we have from Remark \ref{rem:5.6-VNR}\ref{rem:5.6-VNR(2)} that there is a Peirce basis $\bar{E} = (\bar{e}_1, \bar{e}_2, \bar{e}_3)$ of $R$ with
\begin{equation}
\label{equ:VNR-decomposition}
\rann(a) = \bar{e}_1 R \ne \{0_R\}, 
\quad
\rfix(a) = \bar{e}_2 R,
\quad\text{and}\quad
aR \subseteq (1_R-\bar{e}_1)R = \rfix(a) \oplus \bar{e}_3R.
\end{equation}
Suppose by way of contradiction that $a \ne a^2$. By Proposition \ref{prop:caratterizzazione-idemps} and Eq.~\eqref{equ:VNR-decomposition}, this means that $\bar{e}_3 \ne 0_R$.
We claim that there exists a Peirce basis $E = (e_1, e_2, e_3)$ of $R$ with $e_1 = \bar{e}_1$, $\rfix(a) \subseteq e_2R$, and $e_3 \ne 0_R$ and a pair of right $R$-linear maps $\alpha \colon e_3R \to e_1R$ and $\beta \colon e_1 R \to e_3 R$ such that $\beta(\alpha(e_3)) = e_3$. 

For, since $R$ satisfies the comparability axiom, there is an injective homomorphism of $R$-modules $\varphi$ either from $\bar{e}_3R$ to $\bar{e}_1R$ or from $\bar{e}_1R$ to $\bar{e}_3R$. In fact, we may assume that $\varphi$ is actually a map $\bar{e}_1R \to \bar{e}_3R$, or else we are done by taking $E := \bar{E}$, $\alpha := \varphi$, and $\beta := \varphi^{-1}$, where $\varphi^{-1}$ is the inverse of the co\-restriction of $\varphi$ to its own image. 
Consequently, we have from \cite[Theorem 1.1]{Go79} and Proposition \ref{prop:2.2}\ref{prop:2.2(iv)} that $
\{0_R\} \ne \allowbreak \varphi(\bar{e}_1) R = \allowbreak \varphi(\bar{e}_1 R) = \bar{e}_{3,2} R \subseteq \bar{e}_{3,1} R \oplus \bar{e}_{3,2} R = \bar{e}_3 R$
for some orthogonal idempotents $\bar{e}_{3,1}, \bar{e}_{3,2} \in R$, which finishes the proof of the claim upon taking $E := (\bar{e}_1, \bar{e}_2 + \bar{e}_{3,1}, \bar{e}_{3,2})$, $\alpha := \varphi^{-1}$, and $\beta := \varphi$. 

Now, it follows from the above and Remark \ref{rem:5.6-VNR}\ref{rem:5.6-VNR(1)} that there are $y \in e_3 R e_1$ and $z \in e_1 R e_3$ such that $yz = e_3$. Accordingly, define $b := e_3 + ae_2 + (a - 1_R)y$ and $c := 1_R - e_1 + z$. 
Since $E$ is a Peirce basis of $R$ and, by construction, $y \in Re_1$ and $z \in e_1 R e_3 \subseteq e_1 R (1_R-e_1)$, we get from Prop\-o\-si\-tion \ref{prop:2.2}\ref{prop:2.2(v)} that $c = c^2$ and hence $c \in R^\#$. Moreover, $e_3z = \allowbreak ae_2z = \allowbreak y(1_R-e_1) = 0_R$.
So, using that $yz = e_3$,
we find
\[
\begin{split}
bc 
& = e_3(1_R - e_1) + ae_2(1_R - e_1) + (a - 1_R)y(1_R - e_1) + e_3z + ae_2z + (a - 1_R)yz \\
& = e_3 + ae_2 + (a - 1_R)e_3 = 0_R + a(e_2 + e_3) = a(e_1 + e_2 + e_3) = a.
\end{split}
\]
We claim that $b$ and $c$ are both zero divisors of $R$, and each is strictly smaller than $a$ with respect to the $\rfix_R$-preorder on $R^\#$. This will contradict that $a$ is an $\rfix_R$-irreducible of $R^\#$ and complete the proof of the theorem. To begin, let $x \in \rfix(a)$. Since $\rfix(a) \subseteq e_2R$ (by construction), we have $x = e_2u$ for some $u \in R$. Moreover, $ye_2 = ze_2 = 0_R$ (recall that $y \in R e_1$ and $z \in Re_3$). Therefore,
\begin{equation}\label{equ:2021-07-17-16:43}
bx = e_3 e_2 u + a\,e_2^2u + (a-1_R)ye_2u = a\,e_2u = ax = x
\quad\text{and}\quad
cx = x - e_1 e_2 u + ze_2u = x.
\end{equation}
In a similar fashion, it is found that $ye_3 = ze_1 = 0_R$, which implies
\begin{equation}\label{equ:2021-07-17-16:44}
be_3 = e_3^2 + a\,e_2e_3 + (a-1_R)y\, e_3 = e_3
\quad\text{and}\quad
c\,e_1 = (1_R - e_1)e_1 + ze_1 = 0_R. 
\end{equation}
In particular, we get from Eqs.~\eqref{equ:2021-07-17-16:43} and \eqref{equ:2021-07-17-16:44} that 
\begin{equation}\label{equ:2021-07-17-17:25}
\rfix(a) \cup e_3 R \subseteq \rfix(b),
\quad
\rfix(a) \subseteq \rfix(c),
\quad\text{and}\quad
\rann(a) = e_1 R \subseteq \rann(c).
\end{equation}
In fact, it is not difficult to verify that $\rann(a) = \rann(c)$, which will come in handy in Remark \ref{rem:artinian-vnr}\ref{rem:artinian-vnr(1)}. For, let $v \in \rann(c)$. Since $v = e_1 v + e_2 v + e_3 v$ and $e_2 z = e_3 z = 0_R$ (recall that $z \in e_1 R$), we see that $0_R = (1_R-e_1)v + zv = e_2v + e_3v + zv$. But this is only possible if $0_R = e_2^2 v + e_2 e_3 v + e_2 z v = e_2 v$ and, likewise, $0_R = e_3 v$, so that $v = e_1v \in e_1 R$ and hence $\rann(c) \subseteq e_1 R = \rann(a)$.

Also, $\rfix(a) \cap e_3 R = \{0_R\}$, because $e_2 r = e_3 r'$ for some $r, r' \in R$ only if $e_2^2 r = e_2 e_3 r' = 0_R$ (recall that $\rfix(a) \subseteq e_2R$). Thus, we conclude from Eq.~\eqref{equ:2021-07-17-17:25}, similarly as in the proof of Lemma \ref{lem:2021-04-29-12:26}\ref{lem:2021-04-03-12:06(iii)}, that $
\rfix(a) \subsetneq \rfix(a) \oplus e_3R \subseteq \rfix(b)$,
where in the first inclusion we use that $e_3 \ne 0_R$. It only remains to check that $b \in R^\# \setminus \{1_R\}$ and $\rfix(a) \subsetneq \rfix(c)$.

For, we have from Eq.~\eqref{equ:VNR-decomposition} that $aR \subseteq (1_R-e_1)R$ (recall that $e_1 = \bar{e}_1$) and hence $e_1 a = 0_R$. It follows that $e_1 b = e_1e_3 + \allowbreak e_1ae_2 + e_1(a - 1_R)y = 0_R$ (recall that $y \in e_3Re_1$) and hence $0_R \ne e_1 \in \lann(b)$. As a result, $b$ is in $R^\# \setminus \{1_R\}$, for we have already noted that every left zero divisor of $R$ is a zero divisor.

Finally, assume $\rfix(a) = \rfix(c)$. Since $c = c^2$ (as proved earlier), we get from Proposition \ref{prop:caratterizzazione-idemps} 
that $(1_R-e_1 + z)R = \rfix(a) \subseteq e_2 R$. In particular, this yields $1_R - e_1 + z = e_2 w$ for some $w \in R$ and hence $0_R = \allowbreak e_1 e_2 w = e_1 (1_R - e_1) + e_1 z = z$ (because $z \in e_1 R$). Thus, $0_R \ne e_3 = yz = 0_R$ (which is absurd).
\end{proof}

\begin{remarks}\label{rem:artinian-vnr}
Let $R$ be a Dedekind-finite, von Neumann regular ring satisfying the comparability axiom.

\vskip 0.05cm
\indent{}\begin{enumerate*}[label=\textup{(\arabic{*})}]
\item\label{rem:artinian-vnr(1)} 
Given a zero divisor $a \in R$ such that $\rann(a)$ is an indecomposable $R$-module and $a$ is not i\-dem\-po\-tent,
we gather from the proof of Theorem \ref{thm:5.8} that $a$ factors as the product $bc$ of two zero divisors $b, c \in R$ with $c = c^2$, $\rfix(a) \subsetneq \rfix(b)$, $\rfix(a) \subsetneq \rfix(c)$, and $\rann(a) = \allowbreak \rann(c)$; in particular, $\hgt(b) + \allowbreak \hgt(c) \leq \allowbreak \hgt(a)$, where $\hgt(\cdot)$ is the $\rfix_R$-height on $R^\#$ (cf.~Remark \ref{rem:5.3}). \\[0.05cm]

\indent{}\item\label{rem:artinian-vnr(2)} We have from Corollary  \ref{cor:2021-06-02-15:38} and Theorem \ref{thm:5.8} that, if $R^\#$ is an $\rfix_R$-artinian monoid, then each zero divisor $x \in R$ factors as a finite product of coprimitive idempotents of $R$. But, since we are guar\-an\-teed by Remark \ref{rem:left-sing-implies-sing}\ref{rem:left-sing-implies-sing(2)} that every non-unit of $R$ is a zero divisor, the artinianity of the $\rfix_R$-preorder on $R^\#$ is equivalent to the ascending chain condition on the principal right ideals of $R$, which, by \cite[Corollary 2.16]{Go79}, is in turn equivalent to $R$ being \evid{semisimple}, meaning that every right ideal of $R$ is a direct summand of $R_R$. This narrows down the class of von Neumann regular rings $R$ to which we can apply Corollary \ref{cor:2021-06-02-15:38} and leaves open the question whether a refinement of the techniques developed in the present work can handle a greater variety of cases. An affirmative answer is perhaps not so un\-reasonable, especially when considering that, by Example 8.1 in \cite{Go79}, there exist Dedekind-finite, von Neumann regular rings satisfying the comparability axiom that are not artinian and hence not semi\-simple (incidentally, the same example also shows that the characterization provided by Theorem \ref{thm:5.8} applies to a class of von Neumann regular rings that is \emph{strictly larger} than the class of semisimple rings).
\end{enumerate*}
\end{remarks}

Motivated by Remark \ref{rem:artinian-vnr}\ref{rem:artinian-vnr(2)}, we conclude this subsection by restricting our focus on idempotent fac\-tor\-i\-za\-tions in semisimple rings. We recall that semisimple rings are Dedekind-finite (see \cite[Exercise 3.10]{Lam03}), von Neumann regular (as is obvious from the definitions and \cite[Theorem 1.1]{Go79}), and noetherian (see \cite[Corollaries (2.6) and (3.7)]{Lam01}). Moreover, the ring of $n$-by-$n$ matrices over a skew field is semisimple (see, e.g., \cite[Theorem 3.3 and Corollary (3.7)]{Lam01}) and satisfies the comparability axiom (see \cite[Theorem 9.12 and Corollary 9.16]{Go79}). We will use these basic facts without further mention in what follows.

\begin{proposition}\label{prop:5.10}
Set $R := \mathcal M_n(K)$, where $K$ is a skew field. Every singular matrix $A \in R$ factors as a product of at most $\hgt(A)$ idempotent matrices of rank $n-1$, where $\hgt(\cdot)$ is the $\rfix_R$-height of $R^\#$.
\end{proposition}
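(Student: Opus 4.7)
The plan is to apply Theorem \ref{thm:3.5} with $s = 2$ to the monoid $R^\#$ of zero divisors of $R$, endowed with the restriction of the $\rfix_R$-preorder; combine this with Theorem \ref{thm:rFix-quarks-in-right-rickart} to realize the $\rfix_R$-quarks as coprimitive idempotents; and then observe that in $R = \mathcal{M}_n(K)$ the coprimitive idempotents are precisely the rank-$(n-1)$ idempotent matrices.

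For the setup, $R$ is simple artinian, hence semisimple; so it is right Rickart, Dedekind-finite, von Neumann regular, right noetherian, satisfies the comparability axiom, and has $\udim(R_R) = n$. By Remark \ref{rem:left-sing-implies-sing}\ref{rem:left-sing-implies-sing(2)}, every non-invertible element of $R$ is a zero divisor, so a singular matrix in the classical sense is exactly an $\rfix_R$-non-unit of $R^\#$ (the only $\rfix_R$-unit being $1_R$, which is not singular); and Proposition \ref{prop:rickart-strongly-fix-artinian} ensures that $R^\#$ is strongly $\rfix_R$-artinian.

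To invoke Theorem \ref{thm:3.5}\ref{thm:3.5(ii)} with $s=2$, I must verify its hypothesis: for each $A \in R^\#$ that is neither an $\rfix_R$-unit nor an $\rfix_R$-quark, there exist $b, c \in R^\#$ with $A = bc$, $b \preceq A$, $c \preceq A$, and $\hgt(b) + \hgt(c) \leq \hgt(A)$. I would split on the structure of $\rann(A)$. If $\rann(A)$ is decomposable, then, since $R$ is right noetherian, Remark \ref{rem:5.3} supplies the required factorization (with $c$ even a coprimitive idempotent). Otherwise $\rann(A)$ is indecomposable, and $A$ cannot be idempotent, for then Theorem \ref{thm:rFix-quarks-in-right-rickart} would make $A$ a coprimitive idempotent and thus an $\rfix_R$-quark, contrary to the standing assumption on $A$; so Remark \ref{rem:artinian-vnr}\ref{rem:artinian-vnr(1)} (whose hypotheses are all in force) produces the desired factorization. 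Theorem \ref{thm:3.5}\ref{thm:3.5(ii)} then yields that $A$ factors as a product of at most $(s-1)\hgt(A) - (s-2) = \hgt(A)$ $\rfix_R$-quarks of $R^\#$.

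It remains to identify these quarks with rank-$(n-1)$ idempotents. By Theorem \ref{thm:rFix-quarks-in-right-rickart}, the $\rfix_R$-quarks of $R^\#$ are the coprimitive idempotents of $R$; and in the semisimple ring $R = \mathcal{M}_n(K)$ there is, up to isomorphism, a unique simple right module $V$, with $R_R$ a direct sum of $n$ copies of $V$, so an indecomposable submodule of $R_R$ must be simple. For an idempotent $E \in R$, $\rann(E) = (1_R - E)R$ has composition length equal to the rank of $1_R - E$, whence $E$ is coprimitive exactly when $1_R - E$ has rank $1$, i.e., when $E$ has rank $n-1$. The main technical step is the verification of the hypothesis of Theorem \ref{thm:3.5} in the indecomposable-$\rann(A)$ case, and I expect this to be the subtlest point; however it has been prepared for in Remark \ref{rem:artinian-vnr}\ref{rem:artinian-vnr(1)} (which in turn relies on Theorem \ref{thm:5.8}), so once that is invoked the proof reduces to a direct application of the abstract machinery of Section \ref{sec:3}.
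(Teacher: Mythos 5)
Your proposal is correct and follows essentially the same route as the paper: it verifies the hypothesis of Theorem \ref{thm:3.5} with $s=2$ by combining Theorem \ref{thm:rFix-quarks-in-right-rickart} with Remarks \ref{rem:5.3} and \ref{rem:artinian-vnr}\ref{rem:artinian-vnr(1)}, uses Proposition \ref{prop:rickart-strongly-fix-artinian} for strong artinianity, and then identifies the $\rfix_R$-quarks with the rank-$(n-1)$ idempotents. Your only departures are cosmetic: you make the decomposable/indecomposable case split explicit and replace the paper's appeal to Proposition \ref{prop:5.12} by a direct composition-length argument for the coprimitive idempotents, both of which are fine.
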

\begin{proof}
It follows from Theorem \ref{thm:rFix-quarks-in-right-rickart} and Remarks \ref{rem:5.3} and \ref{rem:artinian-vnr}\ref{rem:artinian-vnr(1)} that a singular matrix $A \in R$ either is a coprimitive idempotent of $R$ or factors as the product $BC$ of two singular matrices $B, C \in R$ such that $C$ is an $\rfix_R$-quark of $R^\#$ and $\hgt(B) + \hgt(C) \leq \hgt(A)$.
On the other hand, we have from Proposition \ref{prop:rickart-strongly-fix-artinian} that $R^\#$ is a strongly $\rfix_R$-artinian monoid, for the right uniform dimension of a noetherian ring is finite (see, e.g., \cite[Corollary (6.7)(1)]{Lam99}).
We can therefore apply Theorem \ref{thm:3.5} with $s = 2$ and conclude that every $A \in R^\#$ factors as a product of at most $\hgt(A)$ coprimitive idempotents, which are in turn the idempotent matrices of $R$ of rank $n-1$ (Proposition \ref{prop:5.12}\ref{prop:5.12(i)}).
\end{proof}

The next corollary is now an immediate consequence of Propositions \ref{prop:rickart-strongly-fix-artinian} and \ref{prop:5.10}, when considering that, for a skew field $K$, the uniform dimension of $\mathcal M_n(K)$ as a right $\mathcal M_n(K)$-module equals to $n$ (see, e.g., \cite[Corollary 2.8 and Proposition 2.15]{ShCh07}).

\begin{corollary}
\label{cor:erdos-dawlings}
Every singular $n$-by-$n$ matrix with entries in a skew field is a product of at most $n$ idempotent matrices of rank $n-1$.
\end{corollary}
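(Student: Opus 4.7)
The proof plan is a direct synthesis of Propositions \ref{prop:rickart-strongly-fix-artinian} and \ref{prop:5.10}, which between them have already done essentially all of the work. I would begin by setting $R := \mathcal{M}_n(K)$ and verifying the two hypotheses needed to apply Proposition \ref{prop:rickart-strongly-fix-artinian}. Being semisimple (and hence von Neumann regular), $R$ is a right Rickart ring by Remark \ref{rem:right-rickart-rings}\ref{rem:right-rickart-rings(1)}; and the regular right module $R_R$ has finite uniform dimension, with $\udim(R_R) = n$, as recorded in the paragraph preceding the corollary (via the cited results of \cite{ShCh07}).

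Next, for an arbitrary singular matrix $A \in R^\#$, I would invoke Proposition \ref{prop:rickart-strongly-fix-artinian} to obtain the height bound
\[
\hgt(A) \le \udim(R_R) - \rdim_R(\rfix(A)) \le n,
\]
where the last inequality only uses $\rdim_R(\rfix(A)) \ge 0$. A single application of Proposition \ref{prop:5.10} then writes $A$ as a product of at most $\hgt(A) \le n$ idempotent matrices of rank $n-1$, which is the asserted conclusion.

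I do not anticipate any real obstacle: the corollary is a clean, nearly immediate specialization of the abstract machinery built up in the preceding subsection, which is precisely the point of the authors' approach. The only item worth double-checking is the equality $\udim(R_R) = n$, which follows from the standard decomposition of $R_R$ into the direct sum of $n$ copies of the simple right ideal of matrices supported in a single column; but this is the classical reason why the uniform dimension of a full matrix ring over a skew field coincides with the matrix size, and it is already referenced in the paper.
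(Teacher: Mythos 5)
Your proposal is correct and follows exactly the paper's own route: the corollary is obtained by combining Proposition \ref{prop:rickart-strongly-fix-artinian} (giving $\hgt(A) \le \udim(R_R) - \rdim_R(\rfix(A)) \le n$) with Proposition \ref{prop:5.10}, using that $\udim(\mathcal M_n(K)_{\mathcal M_n(K)}) = n$ for a skew field $K$. No gaps.
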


On the one hand, Corollary \ref{cor:erdos-dawlings} strengthens a result of Laffey \cite{Laf83} (in turn, a generalization of Erdos' classical theorem \cite{Er68}) on the existence of an idempotent factorization for singular matrices over a skew field; on the other, it provides a non-commutative extension of Dawlings' theorem \cite{Da81} on the existence of a factorization into idempotent factors of rank $n-1$ for the singular linear transformations of a vector space of finite dimension $n$ over a field.

The bound $n$ for the minimum length of a factorization into coprimitive idempotents of a singular $n$-by-$n$ matrix over a skew field is sharp, as it is known from the unnumbered corollary on the bottom of \cite[p.~81]{Ba78} that, for any integer $n \ge 2$ and any field $K$, there are $n$-by-$n$ singular matrices with entries in $K$ that cannot be written as a product of $n-1$ idempotents of $\mathcal M_n(K)$.

\begin{remark}\label{rem:semisimple-rings}
Let $R$ be a semisimple ring. By the Wedderburn-Artin theorem, there are a unique integer $k \ge 1$ and a unique tuple $(n_1, \ldots, n_k) \in \mathbb{N}^{\times k}$ with $1 \le n_1 \leq \cdots \leq n_k$ such that $R$ is i\-so\-mor\-phic (as a ring) to a direct product of the form $\prod_{i=1}^k \mathcal M_{n_i}(D_i)$, where the $D_i$'s are skew fields. 
Since ring homomorphisms map idempotents to i\-dem\-po\-tents, we can actually assume without loss of generality that $R = \prod_{i=1}^k R_i$, where for ease of notation $R_i := \mathcal M_{n_i}(D_i)$.
It then follows from Claim \ref{claimA} in Remark \ref{rem:idem-factorizations-in-direct-products} that $\mathcal E(R) = \prod_{i=1}^k \mathcal E(R_i)$. So we find $\mathcal E(R) = \prod_{i=1}^k R_i^\#$, because it is guaranteed by Corollary \ref{cor:erdos-dawlings} that $\mathcal E(R_i) = R_i^\#$ for each $i \in \llb 1, k \rrb$.
On the other hand, we gather from the comments under the proof of the aforementioned Claim \ref{claimA} that the minimum length $\ell(x)$ of an idempotent factorization of an element $x = \allowbreak (x_1, \ldots, x_k) \in \prod_{i=1}^k R_i^\#$ is bounded above by $\max_{1 \le i \le k} \ell_i(x)$, where $\ell_i(x)$ is the minimum length of an idempotent factorization of $x_i$ in $R_i^\#$. In particular, Corollary \ref{cor:erdos-dawlings} gives $\ell(x) \le \max(n_1, \ldots, n_k) = n_k$.
\end{remark}

As a complement to Remark \ref{rem:semisimple-rings}, note that, by \cite[Corollary 8.6]{Go79}, a semisimple ring $R$ satisfies the axiom of comparability only if $R = \mathcal M_n(K)$ for a skew field $K$, which explains why we cannot just apply Theorems \ref{thm:3.5} and \ref{thm:5.8} and conclude in general that the idempotents generate the zero divisors of $R$. 

\subsection{Endomorphism rings}
\label{sec:5.2-endo-rings}
Let $M$ be a (right) module over a ring $D$. In this subsection, we apply some of the results of Sect.~\ref{sec:4} to the monoid of zero divisors of the endomorphism ring $\End(M)$. 

Given $f \in \End(M)$, we denote by $\ker(f) := \{{x} \in M \colon f({x}) = {0}_M\}$ the \evid{kernel} of $f$ (as usual) and set
\begin{equation}\label{equ:fix(f)}
\fix(f) := \ker (\text{id}_M - f) = \{{x} \in M \colon f({x}) = {x}\}.
\end{equation}
Since $M$ is canonically a left mod\-ule over $\End(M)$, with multiplication given by the map $\End(M) \times M \to M \colon (g, {x}) \mapsto g({x})$, we have $\ker(g) = \{{x} \in M \colon g{x} = 0_M\}$ for every $g \in \End(M)$.
As a result, 
we gather from (the right analogue of) \cite[Lemma C.1(1)]{Nic-You03} that
\begin{equation}\label{equ:ker(f-f^2)}
\ker(f - f^2) = \ker(f) \oplus \ker(\id_M - f) = \ker(f) \oplus \fix(f).
\end{equation}
In particular, note that, if $f$ is idempotent, then $\ker(f-f^2) = M$. This leads to the next proposition, whose proof is along the same lines of the proof of Proposition \ref{prop:caratterizzazione-idemps} (we leave the details to the reader).

\begin{proposition}\label{prop:idempotent-endo}
An endomorphism $f$ of $M$ is idempotent in $\End(M)$ if and only if $\ker(f) \oplus \allowbreak \fix(f) = \allowbreak M$, if and only if $\fix(f) = f(M)$.
\end{proposition}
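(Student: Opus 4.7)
The plan is to mimic verbatim the cyclic argument already used for Proposition \ref{prop:caratterizzazione-idemps}, exploiting the fact (noted in the paragraph preceding the statement) that $M$ is a left $\End(M)$-module under evaluation, so that $\ker(f)$ plays the role of $\rann(f)$, $\fix(f)$ plays the role of $\rfix(f)$, and $f(M)$ plays the role of $fR$. Accordingly, I will prove the equivalences by establishing the three implications (a)$\Rightarrow$(b)$\Rightarrow$(c)$\Rightarrow$(a), where (a) is ``$f$ is idempotent'', (b) is ``$M = \ker(f) \oplus \fix(f)$'', and (c) is ``$\fix(f) = f(M)$''.

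For (a)$\Rightarrow$(b): given $f^2 = f$ and $x \in M$, write $x = (x - f(x)) + f(x)$; then $f(x-f(x)) = f(x) - f^2(x) = 0_M$, so $x - f(x) \in \ker(f)$, while $f(f(x)) = f^2(x) = f(x)$ gives $f(x) \in \fix(f)$. Thus $M \subseteq \ker(f) + \fix(f)$, and the opposite inclusion is trivial. The sum is direct by Eq.~\eqref{equ:ker(f-f^2)}: the identity $\ker(f - f^2) = \ker(f) \oplus \fix(f)$ already asserts that $\ker(f) \cap \fix(f) = \{0_M\}$, independently of the assumption $f^2 = f$.

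For (b)$\Rightarrow$(c): any $y \in M$ decomposes as $y = u + v$ with $u \in \ker(f)$ and $v \in \fix(f)$, so that $f(y) = f(u) + f(v) = 0_M + v = v \in \fix(f)$; hence $f(M) \subseteq \fix(f)$. The reverse inclusion is immediate since each $z \in \fix(f)$ satisfies $z = f(z) \in f(M)$. For (c)$\Rightarrow$(a): if $\fix(f) = f(M)$, then for every $x \in M$ we have $f(x) \in f(M) = \fix(f)$, hence $f^2(x) = f(f(x)) = f(x)$; since $x$ was arbitrary, $f^2 = f$ in $\End(M)$.

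I expect no real obstacle: the argument is formally identical to that for Proposition \ref{prop:caratterizzazione-idemps}, and the only thing to be careful about is distinguishing the scalar action of $\End(M)$ on $M$ from ring multiplication inside $\End(M)$ itself (so that, e.g., ``$fx = x$'' in the module sense is the same as ``$f(x) = x$''). Eq.~\eqref{equ:ker(f-f^2)} and the identifications $\ker(f) = \mathrm{r}_M(f)$, $\fix(f) = \ker(\id_M - f)$ recorded just before the statement make this translation essentially automatic.
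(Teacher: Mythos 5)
Your proof is correct and is precisely the argument the paper intends: it declares the proof to be ``along the same lines'' as that of Proposition \ref{prop:caratterizzazione-idemps} and leaves the details to the reader, and your three implications (a)$\Rightarrow$(b)$\Rightarrow$(c)$\Rightarrow$(a) are the verbatim module-theoretic translation of that cyclic argument, with the trivial intersection $\ker(f)\cap\fix(f)=\{0_M\}$ correctly supplied by Eq.~\eqref{equ:ker(f-f^2)} (or directly, since $f(x)=0_M$ and $f(x)=x$ force $x=0_M$). No gaps.
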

Building on these premises, we collect some elementary results that are folklore in module theory but for which we have not been able to find any convenient reference.
\begin{proposition}\label{prop:5.12}
Let $R$ be the endomorphism ring of $M$ and let $f \in R$. Suppose that $M = \ker(f) \oplus N$ for some submodule $N \subseteq M$ and denote by $p$ the projection of $M$ on $\ker(f)$ along $N$. The following hold:
\begin{enumerate}[label=\textup{(\roman{*})}]
\item\label{prop:5.12(i)} $\rann(f) = pR$, $\ker(f) = p(M)$, and $R = pR \oplus (\id_M-p)R$.
\item\label{prop:5.12(ii)} $\rann(f) \subseteq R_R$ is an indecomposable module if and only if so is $\ker(f)$ as a submodule of $M$.
\end{enumerate}
\end{proposition}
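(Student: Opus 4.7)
The plan is to handle part (i) directly from the definition of a projection and the basic relationship between idempotents in $R = \End(M)$ and direct sum decompositions, and then to obtain part (ii) as a translation between decompositions of the right ideal $pR$ and submodule decompositions of $\ker(f) = p(M)$, mediated by Proposition \ref{prop:2.2}\ref{prop:2.2(ii)}.

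For part (i), $p$ is an idempotent endomorphism with $p(M) = \ker(f)$ by the very definition of a projection onto $\ker(f)$ along $N$. Since $p(M) = \ker(f)$ we have $fp = 0_R$, so $pR \subseteq \rann(f)$; conversely, if $g \in \rann(f)$ then $g(M) \subseteq \ker(f) = p(M)$, and since $p$ restricts to the identity on $\ker(f)$, the composition $pg$ equals $g$, so $g \in pR$. The decomposition $R = pR \oplus (\id_M - p)R$ is then immediate from Eq.~\eqref{equ:direct-sum-decomposition-idemp} applied to the idempotent $p$.

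For part (ii), I would combine Proposition \ref{prop:2.2}\ref{prop:2.2(ii)} with the following correspondence between non-trivial orthogonal-idempotent decompositions of $p$ and non-trivial submodule decompositions of $\ker(f) = p(M)$. For the $(\Leftarrow)$ direction, if $\ker(f) = K_1 \oplus K_2$ with $K_1, K_2$ non-zero submodules of $M$, the hypothesis $M = \ker(f) \oplus N$ refines to $M = K_1 \oplus K_2 \oplus N$; letting $p_j \in R$ be the projection of $M$ onto $K_j$ along $K_{3-j} \oplus N$, a routine verification shows that $p_1, p_2$ are non-zero orthogonal idempotents with $p_1 + p_2 = p$, and Proposition \ref{prop:2.2}\ref{prop:2.2(ii)} delivers $pR = p_1 R \oplus p_2 R$ with both summands non-zero. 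For the $(\Rightarrow)$ direction, if $pR = \mathfrak{i}_1 \oplus \mathfrak{i}_2$ with $\mathfrak{i}_1, \mathfrak{i}_2$ non-zero, Proposition \ref{prop:2.2}\ref{prop:2.2(ii)} produces non-zero orthogonal idempotents $e_1, e_2 \in R$ with $p = e_1 + e_2$ and $\mathfrak{i}_j = e_j R$; setting $K_j := e_j(M)$, the identity $p e_j = e_j^2 + e_{3-j} e_j = e_j$ gives $K_j \subseteq p(M) = \ker(f)$, every $p(x)$ splits as $e_1(x) + e_2(x)$, and orthogonality forces $K_1 \cap K_2 = \{0_M\}$ (if $y = e_1(x) = e_2(x')$, then $y = e_1^2(x) = e_1 e_2(x') = 0_M$). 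Since $e_j \ne 0_R$ implies $K_j \ne \{0_M\}$, this yields a non-trivial decomposition $\ker(f) = K_1 \oplus K_2$.

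I do not anticipate any deep obstacle; the proof is essentially a bookkeeping exercise ensuring that the right $R$-module structure of $pR$ and the submodule structure of $p(M)$ in $M$ are governed by the same idempotent data. The one place to be careful is the $(\Rightarrow)$ direction, where one must correctly identify the images $e_j(M)$ as submodules of $\ker(f)$ (using $p e_j = e_j$) and verify the non-triviality and direct-sum properties from the orthogonality and non-vanishing of the idempotents $e_j$ produced by Proposition \ref{prop:2.2}\ref{prop:2.2(ii)}.
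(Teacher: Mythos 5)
Your proposal is correct and follows essentially the same route as the paper: part (i) via $fp = 0_R$ and $g = pg$ for $g \in \rann(f)$, and part (ii) by translating decompositions of $pR$ into decompositions of $p(M) = \ker(f)$ (and vice versa) through Proposition \ref{prop:2.2}\ref{prop:2.2(ii)} and the projections onto the refined summands. The only difference is cosmetic: the paper phrases both directions of (ii) as arguments by contradiction, while you prove the contrapositives directly and spell out the verification that $e_1(M) \oplus e_2(M) = \ker(f)$, which the paper leaves implicit.
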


\begin{proof}
\ref{prop:5.12(i)}: Every projection of $M$ is an i\-dem\-po\-tent of $R$, see, e.g., \cite[Corollary 5.8]{An-Fu92}. Moreover, it is clear that $\ker(f) = p(M)$. In view of Eq.~\eqref{equ:direct-sum-decomposition-idemp}, it is therefore enough to show that $\rann(f) = pR$. 

Since $\ker(f) = p(M)$, we have $fp({x}) = f(p({x})) = {0}_M$ for each ${x} \in M$, which implies $fp = 0_R$ and hence $pR \subseteq \rann(f)$. As for the opposite inclusion, let $g \in \rann(f)$. Then $fg = {0}_R$ and hence $g(M) \subseteq \ker(f) = p(M)$. Since $p({y}) = {y}$ for all ${y} \in \ker(f)$, it follows that $g=pg \in pR$ and hence $\rann(f) \subseteq pR$. 

\vskip 0.05cm

\ref{prop:5.12(ii)}: Assume first that $\rann(f)$ is an indecomposable submodule of $R_R$ and suppose for a contradiction that $\ker(f) = N_1 \oplus N_2$ for some non-zero submodules $N_1, N_2 \subseteq M$. It follows from our hypotheses that
$M = \ker(f) \oplus N = N_1 \oplus N_2 \oplus N$.
Let $p_1$ be the projection of $M$ on $N_1$ along $N_2 \oplus N$, and $p_2$ be the projections of $M$ on $N_2$ along $N_1 \oplus N$. It is straightforward that $p = p_1 + p_2$ and $p_1 p_2 = p_2 p_1 = 0_R$, i.e., $p_1$ and $p_2$ are orthogonal idempotents of $R$ (recall \cite[Corollary 5.8]{An-Fu92}). 
So, putting it all together, 
we conclude from item \ref{prop:5.12(i)} and Proposition \ref{prop:2.2}\ref{prop:2.2(ii)} that $\rann(f) = pR = p_1R \oplus p_2R$. But this is impossible, because $p_1R$ and $p_2R$ are \emph{non-zero} right ideals of $R$ (and $\rann(f)$ is assumed to be indecomposable).

Conversely, let $\ker(f)$ be an indecomposable submodule of $M$ and suppose for a contradiction that $\rann(f)=\mathfrak{i}_1\oplus\mathfrak{i}_2$ for some non-zero right ideals $\mathfrak{i}_1, \mathfrak{i}_2 \subseteq R$. By item \ref{prop:5.12(i)}, $pR$ is a direct summand of $R_R$ with $pR = \rann(f)$ and $p(M) = \ker(f)$. So, again by Proposition \ref{prop:2.2}\ref{prop:2.2(ii)}, there exist \emph{non-zero} orthogonal i\-dem\-po\-tents $e_1, e_2 \in R$ with $p = e_1 + e_2$ such that $\mathfrak i_1 = e_1 R$ and $\mathfrak i_2 = e_2R$. This is however absurd, for it yields $\ker(f) = p(M) = e_1(M) \oplus e_2(M)$ and hence contradicts that $\ker(f)$ is indecomposable.
\end{proof}

\begin{proposition}\label{prop:5.13}
Let $M$ be a Rickart module and $R$ be its endomorphism ring, and let $f, g \in R$. Then $f \preceq g$ if and only if $\fix(g) \subseteq \fix(f)$, where $\preceq$ is the $\rfix$ preorder on $R$.
\end{proposition}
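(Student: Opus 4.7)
The plan is to prove the two implications separately, noting in advance that one of them uses nothing about $M$ beyond the module axioms and that the Rickart hypothesis is needed only for the other.

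First I would dispatch the easy direction ($\Leftarrow$). Assuming $\fix(g) \subseteq \fix(f)$, pick any $h \in \rfix_R(g)$, which by Eq.~\eqref{equ:r.fix-in-a-ring} means $gh = h$ in $R = \End(M)$. Evaluating at an arbitrary $x \in M$ gives $g(h(x)) = h(x)$, i.e.~$h(x) \in \fix(g) \subseteq \fix(f)$, so $f(h(x)) = h(x)$; since $x$ was arbitrary, $fh = h$ and $h \in \rfix_R(f)$. This shows $\rfix_R(g) \subseteq \rfix_R(f)$, i.e., $f \preceq g$, and uses neither Proposition~\ref{prop:5.12} nor the Rickart property.

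For the harder direction ($\Rightarrow$), the strategy is to produce a distinguished element of $\rfix_R(g)$ that, as a map on $M$, fixes every vector of $\fix(g)$ pointwise. Since $M$ is Rickart, $\ker(\id_M - g) = \fix(g)$ (see Eq.~\eqref{equ:fix(f)}) is a direct summand of $M$, say $M = \fix(g) \oplus N$. Let $p \in R$ be the projection of $M$ onto $\fix(g)$ along $N$. Applying Proposition~\ref{prop:5.12}\ref{prop:5.12(i)} to the endomorphism $\id_M - g$ in place of $f$ (so that $\ker(\id_M - g) = \fix(g)$ plays the role of $\ker(f)$), we obtain $\rfix_R(g) = \rann_R(\id_M - g) = pR$; in particular $p \in \rfix_R(g)$. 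If we now assume $f \preceq g$, i.e., $\rfix_R(g) \subseteq \rfix_R(f)$, then $p \in \rfix_R(f)$, so $fp = p$ as endomorphisms of $M$. For any $x \in \fix(g) = p(M)$ we have $p(x) = x$ (as $p$ is idempotent and $x$ lies in its image), whence $f(x) = (fp)(x) = p(x) = x$ and $x \in \fix(f)$. This gives $\fix(g) \subseteq \fix(f)$, as required.

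The main subtlety is purely bookkeeping: Proposition~\ref{prop:5.12} is phrased in terms of $\ker(f)$, so one must be careful to apply it to $\id_M - g$ rather than $g$ itself in order to translate the condition $\rfix_R(g) = pR$ into a statement about $\fix(g)$. Apart from this translation, no real obstacle is anticipated; the proposition is essentially a reformulation of the $\rfix$-preorder in geometric (submodule-theoretic) terms that is made possible by the Rickart hypothesis via the existence of the projection $p$.
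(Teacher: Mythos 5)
Your proof is correct. It rests on the same key fact as the paper's argument, namely Proposition~\ref{prop:5.12}\ref{prop:5.12(i)} applied to $\id_M - g$, which identifies $\rfix_R(g)$ with $qR$ for $q$ the projection of $M$ onto $\fix(g)$ along a complement. The difference is in the organization, and it is a genuine (if modest) streamlining. The paper treats the two directions symmetrically: it introduces \emph{both} projections, $p$ onto $\fix(f)$ and $q$ onto $\fix(g)$, rewrites $f \preceq g$ as $qR \subseteq pR$, and then shows $qR \subseteq pR$ if and only if $q(M) \subseteq p(M)$ by manipulating the idempotents $p$ and $q$. You instead dispatch the implication $\fix(g) \subseteq \fix(f) \Rightarrow f \preceq g$ by a direct pointwise computation ($gh = h$ forces $h(x) \in \fix(g) \subseteq \fix(f)$, hence $fh = h$), which requires neither a projection nor the Rickart hypothesis and so is valid over an arbitrary module; and for the converse you need only the single projection onto $\fix(g)$, extracting $fq = q$ from $q \in \rfix_R(g) \subseteq \rfix_R(f)$ and evaluating on $\fix(g) = q(M)$. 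What your route buys is the explicit observation that the Rickart hypothesis is used only in the ``only if'' direction; what the paper's route buys is a uniform statement ($qR \subseteq pR \Leftrightarrow q(M) \subseteq p(M)$ for idempotents $p, q$) that it reuses implicitly elsewhere. Both are complete and correct.
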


\begin{proof}
Since $M$ is a Rickart module, it follows from Eq.~\eqref{equ:fix(f)} and Remark \ref{rem:right-rickart-rings}\ref{rem:right-rickart-rings(1)} that $M = \fix(f) \oplus N_f = \fix(g) \oplus N_g$ for some submodules $N_f, N_g \subseteq M$. Let $p$ be the projection of $M$ on $\fix(f)$ along $N_f$, and $q$ be the projection of $M$ on $\fix(g)$ along $N_g$. By Definition~\ref{def:r.fix-preorder} and Proposition \ref{prop:5.12}\ref{prop:5.12(i)}, $f \preceq g$ if and only if $\rfix(g) \subseteq \rfix(f)$, if and only if $qR \subseteq pR$. We claim that $qR\subseteq pR$ if and only if $\fix(g)\subseteq\fix(f)$.

Assume first that $qR\subseteq pR$, i.e., $q=p\alpha$ for some $\alpha \in R$. Then $pq=p\alpha=q$. It follows that $q(x) = \allowbreak p(q(x)) \in p(M)$ for each $x \in M$, whence $\fix(g) = q(M) \subseteq p(M)=\fix(f)$. If, on the other hand, $\fix(g) \subseteq \allowbreak \fix(f)$, then $q(M)\subseteq p(M)$, i.e., for every $x \in M$ there exists ${y} \in M$ such that $q({x}) = p({y})$. But then $pq({x}) = p(q({x})) = p(p({y})) = p({y}) = q({x})$, whence $q =p q$ and $qR\subseteq pR$.
\end{proof}
Assume from now on (unless noted otherwise) that $D$ is a commutative PID and $M$ is a free module of finite rank $n$ over $D$, and denote by $R$ the endomorphism ring $\End(M)$. If $D$ is not a field, then $R$ is not von Neumann regular, hence the study of idempotent factorizations in $R^\#$ cannot be reduced to the cases covered by Sect.~\ref{sec:VNR-case}. On the other hand, since every commutative \textup{PID} is a right semi-hereditary ring, we are guaranteed by Remark \ref{rem:right-rickart-rings}\ref{rem:right-rickart-rings(1)} that $M$ is a Rickart module and $R$ is a right Rickart ring. It follows by Eq.~\eqref{equ:fix(f)} that $\fix(f)$ is a direct summand of $M$. So, if $f, g \in R$ and $\fix(f) \subseteq \fix(g)$, we get from \cite[Exercise 5.4(2), p.~76]{An-Fu92} that $\fix(g) = \fix(f) \oplus N$ for a certain submodule $N \subseteq M$. From here and Proposition \ref{prop:5.13}, arguing as in Proposition \ref{prop:rickart-strongly-fix-artinian}, we find
\begin{equation}\label{equ:rank-nullity-ineq}
\hgt(f)\leq {n} - \rk(\fix(f)), 
\end{equation}
where $\hgt(\cdot)$ is the $\rfix_R$-height of $R$ and $\rk(\cdot)$ denotes the \evid{rank} of a free module over $D$. Therefore, we gather from Remark \ref{rem:height}\ref{rem:height(1)} that every (multiplicative) submonoid of $R$ is strongly $\rfix_R$-artinian.

In addition, since $M$ is a noetherian module, we have from Remark \ref{rem:left-sing-implies-sing}\ref{rem:left-sing-implies-sing(3)} that $f\in R$ is a zero divisor if and only $f$ is a non-injective endomorphism of $M$, i.e., $\rk(\ker(f))\ge 1$. We thus conclude from the rank-nullity theorem that 
$R^\#=\{f\in H \colon \rk(f(M))\le n-1\}\cup\{\id_M\}$.

\begin{remark}\label{rem:coprimitive_endo}
Given $f\in R$, $\ker(f)$ is an indecomposable free $D$-module if and only if $\rk(\ker(f))=1$. This shows, in light of Proposition \ref{prop:idempotent-endo}, that $f$ is a coprimitive idempotent if and only if $\fix(f)=f(M)$ and $\rk(\ker(f))=1$, if and only if $\rk(f(M))=\rk(\fix(f))=n-1$ (by the rank-nullity theorem).
\end{remark}

We can now recover the characterizations of $\rfix_R$-quarks and $\rfix_R$-irreducibles obtained at the beginning of Sect.~\ref{subsec:characterizations} for a general right Rickart ring. We rephrase the results in terms of $\End(M)$.

\begin{proposition}\label{prop:quark-irreducibles}
Let $f$ be a non-injective endomorphism of $M$. The following hold:
\begin{enumerate}[label=\textup{(\roman{*})}]
    \item\label{prop:quark-irreducibles(i)} $f$ is an $\rfix_R$-quark of $R^\#$ if and only if $f$ is a coprimitive idempotent.
    \item\label{prop:quark-irreducibles(ii)} $f$ is an $\rfix_R$-irreducible of $R^\#$ only if $\rk(\ker(f))=1$, and if $\rk(\ker(f))>1$ then $f=g \,h$ for some non-injective endomorphisms $g, h \in R$ with $\fix(f) \subsetneq \fix(g)$ and $\fix(f) \subsetneq \fix(h)$ such that $h$ is a coprimitive idempotent.
\end{enumerate}
\end{proposition}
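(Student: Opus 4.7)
The plan is to reduce both parts to general facts from Sect.~\ref{sec:4}, exploiting two features of the present setting recorded in the paragraphs preceding the statement: (a) as the PID $D$ is semi-hereditary, the module $M$ is Rickart and $R=\End(M)$ is a right Rickart ring (Remark~\ref{rem:right-rickart-rings}\ref{rem:right-rickart-rings(1)}); and (b) as $M$ is noetherian, it is hopfian, so by Remark~\ref{rem:left-sing-implies-sing}\ref{rem:left-sing-implies-sing(3)} the monoid $R^\#$ of zero divisors of $R$ coincides with the monoid of \emph{left} zero divisors. This puts us inside the scope of Proposition~\ref{prop:4.6}, Theorem~\ref{thm:rFix-quarks-in-right-rickart}, and Remark~\ref{rem:5.3}.

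Item~(i) then follows immediately from Theorem~\ref{thm:rFix-quarks-in-right-rickart}, which, combined with (b), identifies the $\rfix_R$-quarks of $R^\#$ with the coprimitive idempotents of $R$. For the necessity clause in~(ii), I would apply Proposition~\ref{prop:4.6}\ref{prop:4.6(i)} to conclude that $\rann(f)$ is an indecomposable submodule of $R_R$, then use Proposition~\ref{prop:5.12}\ref{prop:5.12(ii)} (with $N$ any additive complement of $\ker(f)$ in $M$, which exists because $M$ is Rickart) to transfer this indecomposability to $\ker(f)$. Since $\ker(f)$ is a submodule of a finitely generated free module over the PID $D$, it is itself free; and a non-zero free $D$-module is indecomposable exactly when its rank equals~$1$. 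As $f$ is non-injective, this gives $\rk(\ker(f))=1$.

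For the factorization claim, assume $\rk(\ker(f))>1$ and write $\ker(f)=N_1\oplus N_2$ with $N_1$ free of rank~$1$ (hence indecomposable) and $N_2\ne\{0_M\}$. The projection construction in the proof of Proposition~\ref{prop:5.12}\ref{prop:5.12(ii)} transports this into a decomposition $\rann(f)=\mathfrak g\oplus\mathfrak h$ with $\mathfrak h\ne\{0_R\}$ and $\mathfrak g$ an indecomposable right ideal of $R$: concretely, letting $p_1,p_2$ be the projections of $M$ onto $N_1,N_2$ along suitable complements, set $\mathfrak g:=p_1R$ and $\mathfrak h:=p_2R$, noting that a further application of Proposition~\ref{prop:5.12}\ref{prop:5.12(ii)} to the idempotent $\id_M-p_1$ (whose kernel equals $N_1$) upgrades $\mathfrak g$ to an indecomposable right ideal. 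From here I would follow the recipe of Remark~\ref{rem:5.3} step by step: write $\mathfrak g=eR$ for an idempotent $e\in R$ via Proposition~\ref{prop:2.2}\ref{prop:2.2(i)} and apply Lemma~\ref{lem:2021-04-29-12:26} to produce $g:=e+(1_R-e)f$ and $h:=1_R-e+ef$, obtaining $f=gh$, $h^2=h$, $\rann(h)=\mathfrak g$ (so $h$ is a coprimitive idempotent), $\mathfrak h\subseteq\rann(g)$, and the strict inclusions $\rfix(f)\subsetneq\rfix(g)$, $\rfix(f)\subsetneq\rfix(h)$. Proposition~\ref{prop:5.13} turns the last two into $\fix(f)\subsetneq\fix(g)$ and $\fix(f)\subsetneq\fix(h)$; and $g,h$ are non-injective, because $\rann(g),\rann(h)\ne\{0_R\}$ makes them left zero divisors of $R$, hence by~(b) non-injective endomorphisms of $M$.

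The only genuinely delicate step, and the one I would scrutinise most, is the indecomposable splitting $\rann(f)=\mathfrak g\oplus\mathfrak h$; everything downstream amounts to routine bookkeeping around Lemma~\ref{lem:2021-04-29-12:26}. This splitting is ultimately made possible by the PID hypothesis on~$D$, which both forces $\ker(f)$ to be free (so that a rank-one indecomposable summand can be chipped off) and, through Proposition~\ref{prop:5.12}\ref{prop:5.12(ii)}, promotes this summand of $\ker(f)$ to an indecomposable summand of the right ideal $\rann(f)$.
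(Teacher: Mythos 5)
Your proposal is correct and follows essentially the same route as the paper, whose proof of this proposition is just the citation chain Theorem~\ref{thm:rFix-quarks-in-right-rickart} for \ref{prop:quark-irreducibles(i)} and Propositions~\ref{prop:4.6}\ref{prop:4.6(i)}, \ref{prop:5.13}, and Remark~\ref{rem:5.3} for \ref{prop:quark-irreducibles(ii)}. You merely make explicit what the paper leaves implicit, in particular obtaining the splitting $\rann(f)=\mathfrak g\oplus\mathfrak h$ with $\mathfrak g$ indecomposable by chipping a rank-one free summand off $\ker(f)$ and transporting it via Proposition~\ref{prop:5.12}\ref{prop:5.12(ii)}, which is exactly the mechanism Remark~\ref{rem:5.3} relies on.
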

\begin{proof}
Since $R$ is a right Rickart ring in which every left zero divisor is a zero divisor, \ref{prop:quark-irreducibles(i)} follows immediately from Theorem \ref{thm:rFix-quarks-in-right-rickart}, and \ref{prop:quark-irreducibles(ii)} follows from Propositions \ref{prop:4.6}\ref{prop:4.6(i)} and \ref{prop:5.13} and Remark \ref{rem:5.3}.
\end{proof}

Note that one cannot expect that every $\rfix_R$-irreducible of a certain degree $s \ge 2$ of $R^\#$ is idempotent. In fact, it is well known from classical results in matrix theory \cite{Cohn66,Co-Za-Za18} that, for some choices of the PID $D$, there is at least one element $f \in R^\#$ that does not factor as a product of idempotents of $R$. 
In the remainder of this section, we will however see that $R^\#$ is idempotent-generated when $D$ is a discrete valuation domain (Theorem \ref{thm:DVD-main-thm}), by proving that, in such case, the degree-$3$ $\rfix_R$-irreducibles of $R^\#$ are co\-primitive idempotents (Proposition \ref{dge1}).
 Here we follow \cite[Ch.~9]{Ati-MacDo} and let a \textsf{discrete valuation domain} (DVD) be a commutative \textup{PID} with a unique maximal ideal generated by a non-zero prime element, where an element $p \in D$ is \textsf{prime} if $p$ is a non-unit and $ab \in pD$, for some $a, b \in D$, if and only if $a \in pD$ or $b \in pD$.

\begin{lemma}\label{lem:2021-04-23-13:03}
If $D$ is a \textup{DVD}, $N$ is a torsion-free $D$-module, and $X$ is a non-empty finite set of linearly de\-pen\-dent vectors of $N$, then there is an $x \in X$ that lies in the submodule of $N$ generated by $X \setminus \{x\}$.
\end{lemma}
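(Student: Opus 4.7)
The plan is to exploit the valuation-theoretic structure of a DVD, which makes divisibility in $D$ totally ordered. Concretely, fix a uniformizer $\pi$ of $D$, and let $v \colon D \setminus \{0_D\} \to \mathbf N$ denote the associated discrete valuation, so that every non-zero $d \in D$ can be written as $d = u \pi^{v(d)}$ with $u \in D^\times$, and for $d, d' \in D \setminus \{0_D\}$ one has $d \mid d'$ if and only if $v(d) \le v(d')$. Extend $v$ by $v(0_D) := \infty$.

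Enumerate $X$ as $\{x_1, \ldots, x_k\}$. By hypothesis, there exist $d_1, \ldots, d_k \in D$ not all zero such that $d_1 x_1 + \cdots + d_k x_k = 0_N$. First I would pick an index $i \in \llb 1, k \rrb$ minimizing $v(d_i)$; since some $d_j$ is non-zero, we have $v(d_i) < \infty$, so $d_i \ne 0_D$, and moreover $d_i$ divides each $d_j$. Hence for every $j \ne i$ there is $e_j \in D$ with $d_j = d_i e_j$, so
\[
d_i \Bigl( x_i + \sum_{j \ne i} e_j x_j \Bigr) \;=\; d_i x_i + \sum_{j \ne i} d_j x_j \;=\; 0_N.
\]

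Next I would invoke the torsion-freeness of $N$: because $d_i \ne 0_D$, cancellation gives $x_i + \sum_{j \ne i} e_j x_j = 0_N$, i.e.\ $x_i = -\sum_{j \ne i} e_j x_j$, which is exactly the statement that $x_i$ lies in the $D$-submodule of $N$ generated by $X \setminus \{x_i\}$.

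There is really no hard step here: the entire argument rests on the fact that the ideals of a DVD are totally ordered, which lets us select a coefficient of minimum valuation through which every other coefficient factors. The only point to keep an eye on is the use of torsion-freeness to cancel $d_i$, since without it one could only conclude a torsion relation rather than an actual expression of $x_i$ as a $D$-linear combination of the remaining vectors.
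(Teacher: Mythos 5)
Your proof is correct and follows essentially the same route as the paper's: both exploit the fact that in a DVD every non-zero scalar is a unit times a power of the uniformizer, select the coefficient of minimal valuation (which therefore divides all the others), factor it out of the dependence relation, and use torsion-freeness of $N$ to cancel it and solve for the corresponding vector. The only cosmetic difference is that you cancel the minimal coefficient $d_i$ itself rather than the power $p^s$ of the uniformizer, and you handle possibly-zero coefficients explicitly via $v(0_D) := \infty$; neither changes the substance of the argument.
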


\begin{proof}
Let $x_1, \ldots, x_m$ be the elements of $X$. By hypothesis, $x_1 a_1 + \cdots + x_m a_m = 0_N$ for some non-zero scalars $a_1, \ldots, a_m \in D$. So, it suffices to show that $x_j$ lies in the submodule of $N$ generated by $X \setminus \{x_j\}$ for a certain $j \in \llb 1, m \rrb$.
Since $D$ is a \textup{DVD}, we gather from \cite[Proposition 9.2]{Ati-MacDo} that there is a non-zero $p \in D$ such that every non-zero $x \in D$ is of the form $u p^s$ for some $s \in \mathbb{N}$ and $u \in D^\times$. Consequently, there exist $s_1, \ldots, s_m \in \mathbb{N}$ and $u_1, \ldots, u_m \in D^\times$ such that $x_i = u_i p^{s_i}$ for every $i \in \llb 1, m \rrb$. Accordingly, we define $s := \min(s_1, \ldots, s_m)$ and, for each $i \in \llb 1, m \rrb$, we set $b_i := u_i p^{s_i - s}$. It follows that
\[
0_N = x_1 u_1 p^{s_1} + \cdots + x_m u_m p^{s_m} = (x_1 b_1 + \cdots + x_m b_m) \, p^s,
\]
which is only possible if $x_1 b_1 + \cdots + x_m b_m = 0_N$ (because $N$ is a torsion-free module). On the other hand, we have $s_j = s$, and hence $b_j = u_j \in D^\times$, for some $j \in \llb 1, m \rrb$. So letting $I := \llb 1, m \rrb \setminus \{j\}$, we obtain that $x_j = -\sum_{i \in I} x_i b_i u_j^{-1}$, i.e., $x_j$ lies in the submodule of $N$ generated by $X \setminus \{x_j\}$.
\end{proof}

We already know from Theorem \ref{thm:rFix-quarks-in-right-rickart} and Remark \ref{rem:left-sing-implies-sing}\ref{rem:left-sing-implies-sing(3)} that every $\rfix_R$-quark of $R^\#$ is a coprimitive idempotent. Now, we aim at characterizing the degree-$3$ $\rfix_R$-irreducibles of $R^\#$ when $D$ is a DVD. 

Note that if $n=1$, then $R$ is ring-isomorphic to $D$, whence $R^\# =\{0_R,1_R\}$ and $0_R$ is a coprimitive idempotent (see also the comments under Eq.~\eqref{equ:2021-06-17-17:44}). This explains the formulation of the next result.
\begin{proposition}\label{dge1}
Let $D$ be a \textup{DVD} and $M$ be a free $D$-module of finite rank $n \ge 2$, and set $R := \End(M)$. The following hold for a non-injective endomorphism $\alpha \in R$:
\begin{enumerate}[label=\textup{(\roman{*})}]
\item\label{dge1(i)} If $\rk(\ker(\alpha)) = 1$ and $\fix(\alpha) = \{0_M\}$, then $\alpha = \beta \gamma$ for some non-identity $\beta, \gamma \in R^\#$ such that $\beta$ is a co\-primitive idempotent and $\fix(\gamma) \ne \{0_M\}$.
\item\label{dge1(ii)} If $\alpha$ is a degree-$3$ $\rfix_R$-irreducible of $R^\#$, then $\alpha$ is a coprimitive idempotent of $R$.
\end{enumerate}
\end{proposition}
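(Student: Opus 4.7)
The plan for part (i) is to construct $\beta$ as the projection of $M$ onto the purification $N^\star$ of $\alpha(M)$---the unique rank-$(n-1)$ direct summand of $M$ containing $\alpha(M)$---along a carefully chosen rank-$1$ complement $L$, and then to take $\gamma := \alpha + \delta$ for a suitable $D$-linear map $\delta \colon M \to L$. Since $\alpha(M) \subseteq N^\star = \fix(\beta)$, one has $\beta \alpha = \alpha$, so every $\gamma$ with $\beta \gamma = \alpha$ differs from $\alpha$ by a map into $L$. The decomposition $M = N^\star \oplus L$ combined with $\alpha(M) \subseteq N^\star$ forces $\ker(\gamma) = \ker(\alpha) \cap \ker(\delta)$; thus, to make $\gamma$ non-injective, $\delta$ must vanish on $K := \ker(\alpha)$. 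For $\fix(\gamma) \ne \{0_M\}$, the hypothesis $\fix(\alpha) = \{0_M\}$ makes $\id_M - \alpha$ injective, and a rank count shows $L \cap (\id_M - \alpha)(M) \ne \{0_M\}$, yielding $w_0 \ne 0_M$ with $(\id_M - \alpha)(w_0) \in L$. By choosing $L$ so that $L \cap K = \{0_M\}$---which can be arranged by varying the complement---we get $w_0 \notin K$, so $\delta$ can be defined to vanish on $K$ and to send $w_0$ to $(\id_M - \alpha)(w_0)$; the key technical step will be checking that $\delta$ extends consistently as a $D$-linear map, using the freeness of $M/K$ as a $D$-module of rank $n-1$.

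For part (ii), let $\alpha$ be a degree-$3$ $\rfix_R$-irreducible of $R^\#$. Proposition \ref{prop:quark-irreducibles}(ii) (and the fact that degree-$3$ implies degree-$2$ irreducibility) gives $\rk(\ker(\alpha)) = 1$. I shall argue by contradiction, assuming $\alpha$ is not a coprimitive idempotent and constructing a factorization of $\alpha$ into two or three strictly $\prec$-smaller zero divisors of $R$, contradicting degree-$3$ irreducibility. If $\fix(\alpha) = \{0_M\}$, part (i) immediately yields $\alpha = \beta \gamma$ with $\fix(\beta), \fix(\gamma) \supsetneq \{0_M\} = \fix(\alpha)$, hence $\beta, \gamma \prec \alpha$ by Proposition \ref{prop:5.13}. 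If $\fix(\alpha) \ne \{0_M\}$, set $F := \fix(\alpha)$; Proposition \ref{prop:idempotent-endo} then forces $\rk(F) \leq n-2$ (since $\alpha$ not being a coprimitive idempotent means $M \ne K \oplus F$, while $K$ and $F$ are direct summands intersecting trivially). Writing $M = F \oplus N'$ with $\rk(N') \geq 2$ yields a block decomposition $\alpha = \left(\begin{smallmatrix} \id_F & A_1 \\ 0 & A_2 \end{smallmatrix}\right)$, where $A_2 \in \End(N')$ satisfies $\rk(\ker(A_2)) = 1$. Applying part (i) (or an analogue) to $A_2$ and lifting the resulting factorization back to $M$---using an auxiliary factor to absorb the off-diagonal block $A_1$ when necessary---produces the desired three-factor decomposition.

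The principal difficulty I anticipate lies in the last case: when $\fix(\alpha) \ne \{0_M\}$, the block $A_2$ may itself possess non-trivial fixed points (only the weaker condition $\ker(A_1) \cap \fix(A_2) = \{0_{N'}\}$ being forced by $\fix(\alpha) = F$), so one cannot invoke part (i) on $A_2$ directly. A tailored construction will therefore be needed to handle this sub-case, likely exploiting the Smith normal form of $\alpha$ over the DVD $D$ together with a careful choice of intermediate idempotent factors, to ensure each lifted factor is both a zero divisor and strictly $\prec$-smaller than $\alpha$.
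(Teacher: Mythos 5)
Your proposal is a plan rather than a proof, and both of the steps you defer are exactly where the real difficulty --- and the DVD hypothesis --- lives. In part \ref{dge1(i)}, after fixing a complement $L$ of the purification $N^\star$ of $\alpha(M)$, you need a $D$-linear $\delta \colon M \to L$ vanishing on $K := \ker(\alpha)$ with $\delta(w_0) = (\id_M - \alpha)(w_0)$ for some non-zero $w_0 \in (\id_M-\alpha)^{-1}(L)$. Such a $\delta$ factors through $M/K$, so it exists only if $(\id_M-\alpha)(w_0)$ is divisible in $L$ by the content of the image of $w_0$ in $M/K$, and this genuinely fails for some admissible $L$: take $D$ a DVD with uniformizer $p$, $n=2$, $\alpha(e_1) = e_1 p + e_2 p^2$, $\alpha(e_2) = 0_M$ (so $K = e_2 D$, $\fix(\alpha) = \{0_M\}$, $N^\star = (e_1 + e_2 p)D$), and $L = (e_1 p + e_2(1+p^2))D$. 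Then $L \cap K = \{0_M\}$, but $(\id_M-\alpha)^{-1}(L)$ is generated by $w_0 = e_1 p + e_2(1-p+p^2)$, whose image in $M/K$ is $\bar{e}_1 p$, while $(\id_M-\alpha)(w_0)$ is a \emph{generator} of $L$; hence every $\delta$ with $\delta(e_2) = 0_M$ sends $w_0$ into $Lp$ and no admissible $\delta$ exists. (For this $\alpha$ the complement $(e_1+e_2)D$ does work.) So ``$L \cap K = \{0_M\}$'' is not the right condition: you must prove that a \emph{good} $L$ always exists, and that existence statement is essentially the content of the paper's Lemma \ref{lem:2021-04-23-13:03} --- the only place the DVD hypothesis enters. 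Your sketch never uses that hypothesis, which is a warning sign, since the analogous conclusions fail over general PIDs such as $\mathbf Z$.

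The gap in part \ref{dge1(ii)} is larger. You correctly reduce to $\rk(\ker(\alpha)) = 1$ and $\rk(\fix(\alpha)) \le n-2$, but the case $\fix(\alpha) \ne \{0_M\}$ --- the generic one --- is precisely the case you leave open: as you yourself note, the block $A_2$ need not satisfy $\fix(A_2) = \{0_{N'}\}$, so part \ref{dge1(i)} does not apply to it, and no substitute argument is supplied. The paper does not argue by reduction to \ref{dge1(i)} at all: it works in a basis adapted to $M = \fix(\alpha) \oplus Q \oplus \ker(\alpha)$, applies Lemma \ref{lem:2021-04-23-13:03} to the $n-d$ vectors $v_{d+1}, \ldots, v_n \in Q$ to extract the relation \eqref{auxil_2}, and then writes down three explicit factors (two of them coprimitive idempotents), each strictly below $\alpha$ in the $\rfix_R$-preorder; this explicit form is also what yields the height inequality later needed for Theorem \ref{thm:DVD-main-thm}. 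Until you supply (a) a proof that a complement $L$ admitting $\delta$ always exists and (b) the three-factor construction when $\fix(\alpha) \ne \{0_M\}$, the proposal does not establish the proposition.
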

\begin{proof}
Denote by $d$ the rank of $\fix(\alpha)$ (recall that any submodule of a free module over a commutative \textup{PID} is free). We have already observed that every degree-$3$ $\rfix_R$-irreducible of $R^\#$ is, in particular, an $\rfix_R$-irreducible. So, by Proposition \ref{prop:quark-irreducibles}\ref{prop:quark-irreducibles(ii)}, we may assume $\rk(\ker(\alpha)) = 1$ both in \ref{dge1(i)} and in \ref{dge1(ii)}.

Since $M$ is a Rickart module (as noted in the comments under the proof of Proposition \ref{prop:5.13}), we 
get from Eq.~\eqref{equ:ker(f-f^2)} that 
$\fix(\alpha) \oplus \ker(\alpha)$ is a direct summand of $M$, namely,
$M = \fix(\alpha) \oplus Q \oplus \ker(\alpha)$ for a certain submodule $Q$ of $M$. Using that $\rk(M) = n \ge 2$, there is hence a basis $(e_1,\ldots,e_n)$ of $M$ with
$\fix(\alpha) = \allowbreak \bigoplus_{i=1}^d e_iD$,
$Q = \bigoplus_{i=d+1}^{n-1} e_i D$,
and $\ker(\alpha) = e_n D$ (note that $0 \le d < n$, with $d = 0$ if and only if $\fix(\alpha) = \{0_M\}$). So, there are uniquely determined coefficients $a_{i,j} \in D$ ($1 \le i, j \le n$) such that 
\begin{equation}\label{equ:centrata}
\alpha(e_j)=\sum_{i=1}^n e_i a_{i,j},
\qquad  \text{for every } i \in \llb 1, n \rrb.
\end{equation}
Accordingly, we define, for each $i \in \llb d+1, n \rrb$, 
\begin{equation}\label{equ:vectors-vi}
v_i := \sum_{j=d+1}^{n-1} e_j a_{i,j} \in Q.
\end{equation}
Clearly, $v_{d+1}, \ldots, v_n$ form a non-empty set of linear dependent vectors of $Q$ (with $v_n = 0_M$ in the limit case when $d = n-1$). So, by Lemma \ref{lem:2021-04-23-13:03}, there exists an index $\ell \in \llb d+1, n \rrb$ such that 
$v_{\ell} = \allowbreak \sum_{i \in I} v_i b_i$,
where $I := \llb d+1, n \rrb \setminus \{\ell\}$ and $b_i \in D$. As a result, we conclude from Eq.~\eqref{equ:vectors-vi} that
\begin{equation}\label{auxil_2}
a_{\ell,j}=\sum_{i \in I} a_{i,j} b_i, \qquad \text{for every } j \in \llb d+1, n-1 \rrb.  
\end{equation}
With these premises in place, we can now focus on the proof of items \ref{dge1(i)} and \ref{dge1(ii)}.

\vskip 0.05cm

\ref{dge1(i)}: Suppose that $\fix(\alpha)=\{0_M\}$. It then follows from the above that $d = 0$, and we distinguish two cases depending on whether $\ell \ne n$ or $\ell=n$.

\vskip 0.05cm 

\textsc{Case 1:} $\ell \ne n$. Let $\beta$ and $\gamma$ be the right $R$-linear maps $M \to M$ uniquely defined by taking
\[
	\beta(e_j) := \left\{
	\begin{array}{ll}
		0_M & \text{if } j = \ell\\
	e_{\ell} b_j + e_j & \text{if } j \in I
	\end{array}
	\right.
	\quad\text{and}\quad
	\gamma(e_j) :=
	\left\{
	\begin{array}{ll}
	e_{\ell}+\sum_{i \in I} e_i a_{i,\ell} & \text{if } j = \ell \\
	\sum_{i\in I} e_i a_{i,j}  & \text{if } j \in I\setminus\{n\} \\
	0_M & \text{if } j = n
	\end{array}
	\right.\!.
	\]
It is fairly obvious that $\beta$ and $\gamma$ are both singular endomorphisms of $M$. On the other hand, we get from Eqs.~\eqref{equ:centrata} and \eqref{auxil_2} (specialized to $d = 0$) that, for every $j \in \llb 1, n-1 \rrb$,
\begin{equation*}
\begin{split}
\beta(\gamma(e_j)) 
    & \displaystyle = \beta\!\left(\sum_{i \in I} e_i a_{i,j} \right) = \sum_{i \in I} \beta(e_i) \, a_{i,j} = \sum_{i \in I} (e_\ell b_i + e_i)\, a_{i,j} \\
    & \displaystyle = e_\ell 
    \sum_{i \in I} b_i a_{i,j} + \sum_{i \in I} e_i a_{i,j} \! \stackrel{\scriptsize\eqref{auxil_2}}{=} \! e_\ell a_{\ell, j} + \sum_{i \in I} e_i a_{i,j} = \sum_{i=1}^n e_i a_{i,j} 
    \! \stackrel{\scriptsize\eqref{equ:centrata}}{=} \! \alpha(e_j).   
    \end{split}
\end{equation*}
Since $\beta(\gamma(e_n))=0_M=\alpha(e_n)$, it follows that $\alpha = \beta\gamma$. 
Moreover, $\beta$ is a co\-primitive idempotent of $R$ (by Remark \ref{rem:coprimitive_endo}), since $\{e_\ell b_i + e_i \colon i \in I\}$ is a basis of $\fix(\beta)$ and $\beta(e_\ell) = 0_M$; and $\fix(\gamma) \ne \allowbreak \{0_M\}$, since $(\id_M-\gamma)(M)\subseteq \bigoplus_{i\in I} e_iD$ and hence $\rk(\fix(\gamma)) \ge 1$ (by the rank-nullity theorem). 

\vskip 0.05cm

\textsc{Case 2:} $\ell=n$. Eq.~\eqref{auxil_2} becomes $a_{n,j}=\sum_{i=1}^{n-1} a_{i,j} b_i$ for every $j\in\llb 1,n-1\rrb$. By Proposition \ref{prop:conjugation}, we may assume that $\alpha(M) \subseteq \bigoplus_{i=1}^{n-1} e_i D$ (otherwise, let $\eta$ be the automorphism of $M$ uniquely determined by sending $e_i$ to $e_i + e_n b_i$ for $i \in \llb 1, n-1 \rrb$ and to $e_n$ for $i = n$, and replace $\alpha$ with $\eta^{-1} \alpha\, \eta$). Accordingly, we take $\beta$ and $\gamma$ to be the endomorphisms of $M$ uniquely defined by
\[
\beta(e_j) := \left\{
\begin{array}{ll}
e_j & \text{if } j \in \llb 1,n-1\rrb \\
\alpha(e_1)-e_1 & \text{if } j = n
\end{array}
\right.\quad\text{and}\quad
\gamma(e_j) :=
\left\{
\begin{array}{ll}
e_1 + e_n & \text{if } j = 1\\
\alpha(e_j) & \text{if } j \in \llb 2,n \rrb
\end{array}
\right..
\]
It is evident that $\beta(M) \subseteq \bigoplus_{i=1}^{n-1} e_i D$ and $\gamma(e_n) = 0_M$, so that $\beta$ and $\gamma$ are $\rfix$-non-units of $M$. Moreover, $\beta(e_j)=e_j$ for $j\in\llb 1,n-1\rrb$ (hence $\beta$ is a coprimitive idempotent) and $\gamma(e_1+e_n)=\gamma(e_1)+\gamma(e_n)= \allowbreak e_1+e_n$.
Lastly, it is routine to check that $\alpha = \beta\gamma$. 

\vskip 0.05cm

\ref{dge1(ii)}: Let $\alpha$ be a degree-$3$ $\rfix_R$-irreducible of $R^\#$ and assume for the sake of a contradiction that $d := \rk(\fix(\alpha)) < n-1$ (implying that $\fix(\alpha)\subsetneq\im(\alpha)$). Again, we distinguish two cases.

\vskip 0.05cm
\textsc{Case 1:} $\ell\ne n$. Let $\beta$ and $\gamma$ be the endomorphisms of $M$ uniquely determined by
\[
\beta(e_j) := \left\{
\begin{array}{ll}
e_j & \text{if } j\in \llb 1,d \rrb \\
0_M & \text{if } j = \ell\\
e_j+ e_{\ell}b_j & \text{if } j\in I
\end{array}
\right.
\quad\text{and}\quad
\gamma(e_j) :=
\left\{
\begin{array}{ll}
e_j & \text{if } j\in \llb 1,d \rrb\cup\{n\} \\
\alpha(e_j)-e_\ell a_{\ell ,j} & \text{if } j\in \llb d+1,n-1 \rrb
\end{array}
\right.;
\]
and denote by $\delta$ the projection of $M$ on $\fix(\alpha) \oplus Q$ along $\ker(\alpha)$.
Each of $\beta$, $\gamma$, and $\delta$ is a non-injective map, because $e_\ell \in \ker(\beta)$, $e_n \in \ker (\delta)$, and $\gamma(M) \subseteq \bigoplus_{j \in J} e_j D$ with $J := \llb 1, n \rrb \setminus \{\ell\}$. Moreover, it is readily seen that $\fix(\alpha)\cup\{e_j + e_\ell b_j \colon j \in I\}$ is a basis of $\fix(\beta)$, $\fix(\alpha) \cup \{e_n\}$ is contained in $\fix(\gamma)$, and $\fix(\alpha) \cup \{e_{d+1},\ldots, \allowbreak e_{n-1}\}$ is a basis of $\fix(\delta)$. It follows that $\beta$, $\gamma$, and $\delta$ are non-injective endomorphisms of $M$ each of which is (strictly) smaller than $\alpha$ with respect to the restriction to $R^\#$ of the $\rfix_R$-preorder on $R$. Moreover, arguing as in the proof of \ref{dge1(i)}, we gather from Eq.~\eqref{auxil_2} that  $\beta\gamma\delta(e_j)=\alpha(e_j)$ for every $j \in \llb 1, n \rrb$. But this is in contradiction to the fact that $\alpha$ is a degree-$3$ $\rfix_R$-irreducible of $R^\#$. (Incidentally, note that $\beta$ and $\delta$ are both coprimitive idempotents of $R$.)

\vskip 0.05cm

\textsc{Case 2:} $\ell=n$. Similarly as in Case 2 from the proof of item \ref{dge1(i)}, we may assume $\alpha(M) \subseteq \allowbreak \bigoplus_{i=1}^{n-1} e_i D$. Accordingly, we take $\beta$ and $\gamma$ to be the endomorphisms of $M$ uniquely defined by
\[
\beta(e_j) := \left\{
\begin{array}{ll}
e_j & \text{if } j\in\llb 1,n-1\rrb \\
\alpha(e_{d+1})-e_{d+1} & \text{if } j = n
\end{array}
\right.
\quad\text{and}\quad
\gamma(e_j) :=
\left\{
\begin{array}{ll}
\alpha(e_j) & \text{if } j \in \llb 1,n \rrb \setminus \{d+1\} \\
e_{d+1}+e_n & \text{if } j = d+1
\end{array}
\right..
\]
The desired conclusion follows arguing as in the last part of the proof of item \ref{dge1(i)}.
\end{proof}

A closer look at Propositions \ref{prop:quark-irreducibles} and \ref{dge1} leads to the next result.

\begin{theorem}\label{thm:DVD-main-thm}
Every non-injective endomorphism $\alpha$ of a free module $M$ of finite rank $n\ge 2$ over a commutative DVD is a product of $2n-2$ or fewer coprimitive idempotents of $\End(M)$.
\end{theorem}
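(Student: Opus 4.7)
The plan is to apply Theorem~\ref{thm:3.5}\ref{thm:3.5(ii)} with $s = 3$ to $R^\#$ equipped with the restriction of the $\rfix_R$-preorder on $R$. A key preliminary observation is that, in our setting, Eq.~\eqref{equ:rank-nullity-ineq} holds with equality: for every $\rfix_R$-non-unit $\alpha \in R^\#$, we have $\hgt(\alpha) = n - \rk(\fix(\alpha))$. Indeed, since $M$ is a Rickart module, $\fix(\alpha)$ is a direct summand of $M$; extending a basis of $\fix(\alpha)$ to a basis $(f_1, \ldots, f_n)$ of $M$ (with $\fix(\alpha) = \bigoplus_{i=1}^d f_i D$, where $d := \rk(\fix(\alpha))$), the projections onto the strictly increasing chain of direct summands $\fix(\alpha) \subsetneq \fix(\alpha) \oplus f_{d+1}D \subsetneq \cdots \subsetneq \fix(\alpha) \oplus f_{d+1}D \oplus \cdots \oplus f_{n-1}D$ are all $\rfix_R$-non-units of $R^\#$, yielding a $\preceq$-decreasing chain of length $n - d$ starting at $\alpha$.

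Next, I verify the hypothesis of Theorem~\ref{thm:3.5} for $s = 3$. Let $\alpha \in R^\#$ be a non-injective endomorphism that is not a coprimitive idempotent (equivalently, by Proposition~\ref{prop:quark-irreducibles}\ref{prop:quark-irreducibles(i)}, not an $\rfix_R$-quark). If $\alpha$ is not an $\rfix_R$-irreducible, then, since $R = \mathcal M_n(D)$ is right noetherian, Remark~\ref{rem:5.3} yields a factorization $\alpha = bc$ into non-injective endomorphisms with $c$ a coprimitive idempotent, $b \prec \alpha$, $c \prec \alpha$, and $\hgt(b) + \hgt(c) \le \hgt(\alpha)$. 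Otherwise, $\alpha$ is an $\rfix_R$-irreducible that is not a coprimitive idempotent, and the constructions in the proof of Proposition~\ref{dge1}\ref{dge1(ii)} produce either (Case~1) a factorization $\alpha = \beta\gamma\delta$ with $\beta, \delta$ coprimitive idempotents and $\rk(\fix(\gamma)) \ge d+1$, or (Case~2) a factorization $\alpha = \beta\gamma$ with $\beta$ a coprimitive idempotent and $\rk(\fix(\gamma)) \ge d+1$. Using the equality $\hgt(x) = n - \rk(\fix(x))$, I get $\hgt(\gamma) \le n - d - 1$, so $\hgt(\beta) + \hgt(\gamma) + \hgt(\delta) \le n - d + 1 = \hgt(\alpha) + 1$ in Case~1 (matching $k = 3$), and $\hgt(\beta) + \hgt(\gamma) \le n - d = \hgt(\alpha)$ in Case~2 (matching $k = 2$).

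With the hypothesis verified, Theorem~\ref{thm:3.5}\ref{thm:3.5(ii)} gives $\ell(\alpha) \le 2\hgt(\alpha) - 1$ for every non-injective $\alpha \in R^\#$, where $\ell(\alpha)$ denotes the minimum length of a factorization of $\alpha$ into coprimitive idempotents. If $\rk(\fix(\alpha)) \ge 1$, then $\hgt(\alpha) \le n - 1$ and the bound becomes $\ell(\alpha) \le 2n - 3 \le 2n - 2$. If instead $\fix(\alpha) = \{0_M\}$, then by Proposition~\ref{dge1}\ref{dge1(i)} (when $\rk(\ker(\alpha)) = 1$) or Proposition~\ref{prop:quark-irreducibles}\ref{prop:quark-irreducibles(ii)} (when $\rk(\ker(\alpha)) > 1$), I can write $\alpha = \beta\gamma$ with $\beta$ a coprimitive idempotent and $\fix(\gamma) \ne \{0_M\}$; the previous case applied to $\gamma$ gives $\ell(\gamma) \le 2n - 3$, whence $\ell(\alpha) \le 1 + \ell(\gamma) \le 2n - 2$.

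The main obstacle is Step~1: establishing the sharp equality $\hgt(\alpha) = n - \rk(\fix(\alpha))$ and, using it, carrying out the delicate height accounting needed to verify the sum-of-heights hypothesis of Theorem~\ref{thm:3.5} from the explicit factorizations produced in the proof of Proposition~\ref{dge1}\ref{dge1(ii)}. Once that is settled, the rest is a short case analysis bridging the bound $2\hgt(\alpha) - 1 \le 2n - 1$ yielded by Theorem~\ref{thm:3.5} down to the sharper bound $2n - 2$ in the single case $\fix(\alpha) = \{0_M\}$.
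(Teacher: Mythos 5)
Your proposal follows the paper's route exactly: verify the hypothesis of Theorem~\ref{thm:3.5} with $s=3$ for $R^\#$ under the $\rfix_R$-preorder, deduce $\ell(\alpha)\le 2\hgt(\alpha)-1$, and then sharpen $2n-1$ to $2n-2$ by peeling off one coprimitive idempotent in the single case $\fix(\alpha)=\{0_M\}$. Your strengthening of Eq.~\eqref{equ:rank-nullity-ineq} to the equality $\hgt(\alpha)=n-\rk(\fix(\alpha))$ is correct (the chain of projections you describe does realize the lower bound), and you genuinely need the ``$\ge$'' direction to pass from $1+\hgt(\gamma)+1\le n-d+1$ to $\hgt(\alpha)+1$; the paper sidesteps this by observing that $\gamma\prec\alpha$ forces $\hgt(\gamma)\le\hgt(\alpha)-1$ directly, but both accountings are valid.

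The one step that fails as written is the first branch of your hypothesis verification. You invoke Remark~\ref{rem:5.3} under the hypothesis ``$\alpha$ is not an $\rfix_R$-irreducible'', but that remark only delivers the factorization $\alpha=bc$ with $c$ a coprimitive idempotent and $\hgt(b)+\hgt(c)\le\hgt(\alpha)$ when $\rann(\alpha)$ is decomposable, or indecomposable with $\rann(\alpha)\ne\rann(\alpha^2)$; these are \emph{necessary} conditions for irreducibility by Proposition~\ref{prop:4.6}\ref{prop:4.6(i)}, not consequences of reducibility. For instance, with $n=2$, $D=\mathbf Z_{(p)}$ and $\alpha$ the endomorphism represented by $\mathrm{diag}(p,0)$, one has $\rk(\ker(\alpha))=1$, $\rann(\alpha)=\rann(\alpha^2)$ indecomposable, and $\fix(\alpha)=\{0_M\}$, so $\alpha$ is reducible by Proposition~\ref{dge1}\ref{dge1(i)} yet Remark~\ref{rem:5.3} says nothing about it, and your second branch explicitly excludes it. The repair is to split on $\rk(\ker(\alpha))$ instead: when $\rk(\ker(\alpha))\ge 2$ the annihilator is decomposable (Proposition~\ref{prop:5.12}\ref{prop:5.12(ii)}) and Remark~\ref{rem:5.3}\,/\,Proposition~\ref{prop:quark-irreducibles}\ref{prop:quark-irreducibles(ii)} applies; when $\rk(\ker(\alpha))=1$ and $\alpha$ is not a coprimitive idempotent (forcing $\rk(\fix(\alpha))<n-1$), the constructions in the proof of Proposition~\ref{dge1}\ref{dge1(ii)} go through verbatim whether or not $\alpha$ happens to be irreducible. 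With that adjustment the argument is sound and coincides with the paper's.
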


\begin{proof}
Let $\hgt(\cdot)$ be the $\rfix_R$-height of $R^\#$ and $\ell(f)$ be the minimum number of factors in a factorization of a non-injective endomorphism $f$ of $M$ into coprimitive idempotents, where $R := \End(M)$. 
We have from Propositions \ref{prop:quark-irreducibles} and \ref{dge1}\ref{dge1(i)} and the proof of Proposition \ref{dge1}\ref{dge1(ii)} that either $f$ is an $\rfix_R$-quark of $R^\#$ or it factors as a product of $2$ or $3$ $\rfix_R$-non-units of $R^\#$ the sum of whose $\rfix_R$-heights is no larger than $\hgt(f)$ or $\hgt(f)+1$, resp.
It follows by Theorem \ref{thm:3.5} (applied with $s = 3$) that $\ell(f) \le 2\hgt(f) - 1$. 

Now, if $\rk(\fix(\alpha)) \ge 1$, we get from Eq.~\eqref{equ:rank-nullity-ineq} 
that $\ell(\alpha) \le 2(n-1)-1=2n-3$. If, on the other hand, $\rk(\fix(\alpha)) = 0$, then we see from the proof of Proposition \ref{dge1} that $\alpha = \beta\gamma$, where $\beta$ is a coprimitive idempotent and $\gamma$ is an $\rfix_R$-non-unit of $R^\#$ with $\hgt(\gamma) \le n-1$; therefore, we conclude from the previous case that $\ell(\alpha) \le 1 + \ell(\gamma) \le 1+2n-3 = 2n-2$.
\end{proof}
\begin{corollary}\label{DVD_matrix}
Every singular $n$-by-$n$ matrix over a commutative \textup{DVD}, with $n\ge 2$, is a product of $2n-2$ or fewer idempotent matrices of rank $n-1$.
\end{corollary}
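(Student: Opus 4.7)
The plan is to reduce the statement directly to Theorem \ref{thm:DVD-main-thm} via the standard identification of matrix rings with endomorphism rings of free modules, so essentially no new mathematical work is needed beyond checking that the translations match up.

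First, I would fix $M := D^n$ together with its canonical basis to obtain a ring isomorphism $\Phi \colon \mathcal{M}_n(D) \to \End(M)$ (matrices acting as column operators on coordinate vectors, say). Under $\Phi$, a matrix $A \in \mathcal{M}_n(D)$ is singular in the ring-theoretic sense used throughout the paper---i.e., a zero divisor of $\mathcal{M}_n(D)$---if and only if the endomorphism $\alpha := \Phi(A)$ is a zero divisor of $\End(M)$; and since $M$ is noetherian and hence hopfian (Remark \ref{rem:left-sing-implies-sing}\ref{rem:left-sing-implies-sing(3)}), this is equivalent to $\alpha$ being non-injective, which in turn is equivalent to $A$ being singular in the usual linear-algebra sense (zero determinant, or column rank $< n$, over the commutative PID $D$).

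Next, I would match up the two notions of ``idempotent matrix of rank $n-1$'' and ``coprimitive idempotent of $\End(M)$''. Under $\Phi$, an idempotent matrix of rank $n-1$ corresponds to an idempotent endomorphism $\varepsilon \in \End(M)$ with $\rk(\varepsilon(M)) = n-1$; by Remark \ref{rem:coprimitive_endo}, this is exactly a coprimitive idempotent of $\End(M)$.

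With these dictionaries in hand, the corollary is immediate: given a singular $A \in \mathcal{M}_n(D)$ with $n \ge 2$, apply Theorem \ref{thm:DVD-main-thm} to the non-injective endomorphism $\alpha := \Phi(A) \in \End(M)$ to obtain a factorization $\alpha = \varepsilon_1 \cdots \varepsilon_k$ with $k \le 2n-2$ and each $\varepsilon_i$ a coprimitive idempotent of $\End(M)$; then $A = \Phi^{-1}(\varepsilon_1) \cdots \Phi^{-1}(\varepsilon_k)$ is the desired factorization of $A$ into at most $2n-2$ idempotent $n$-by-$n$ matrices of rank $n-1$. The only real obstacle would be a mismatch between ``singular'' in the paper's sense and ``singular'' as commonly meant for matrices, but this has already been addressed in the paper (see Remark \ref{rem:left-sing-implies-sing}\ref{rem:left-sing-implies-sing(3)} and the comments preceding Remark \ref{rem:coprimitive_endo}), so there is nothing substantive left to prove.
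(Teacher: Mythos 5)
Your proposal is correct and matches the paper's own treatment: the corollary is stated there as a direct rephrasing of Theorem \ref{thm:DVD-main-thm} in matrix language, using precisely the identifications you spell out (singular matrices correspond to non-injective endomorphisms of the free module $D^n$ via Remark \ref{rem:left-sing-implies-sing}, and idempotent matrices of rank $n-1$ correspond to coprimitive idempotents via Remark \ref{rem:coprimitive_endo}). Your write-up merely makes the dictionary explicit, which is harmless and arguably clearer.
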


\begin{proof}
This is simply a reformulation of Theorem \ref{thm:DVD-main-thm} in the language of matrices.
\end{proof}

Corollary \ref{DVD_matrix} incorporates a classical result of Fountain \cite[Theorem 4.1]{Fo91} on the \emph{existence} of a factorization into idempotent matrices of rank $n-1$ for every $n$-by-$n$ singular matrix over a commutative \textup{DVD}. But the upper bound on the minimum length of such a factorization offered by the same corollary is apparently new, and it is sharp for $n=2$ (it suffices to note that, for $n \ge 2$, the ring of $n$-by-$n$ matrices over an arbitrary ring contains at least one singular non-i\-dem\-po\-tent matrix, namely, the strictly upper triangular matrix whose non-diagonal entries are all equal to the identity).

\section{Closing remarks}\label{sec:6}

Many questions remain open. On the one hand, it would be interesting to study various arithmetic invariants that can be naturally attached to the ``minimal idempotent factorizations'' of the elements of an idempotent-generated monoid along the lines of what is done in \cite[Sect.~4]{An-Tr18} for atomic factorizations in an arbitrary monoid. On the other hand, it might be rewarding to try and extend the techniques developed in Sects.~\ref{sec:3} and \ref{sec:4} so as to prove that the monoid of zero divisors of a broader class of right Rickart rings than covered in Sect.~\ref{subsec:characterizations} is idempotent-generated.

For instance, let $R = \mathcal M_2(\mathbb{Z})$ be the ring of $2$-by-$2$ matrices over the integers. It is well known that every singular matrix over a commutative euclidean domain is a product of idempotent matrices \cite[Theorem 2]{Laf83}. In particular, it was proved by Fountain in \cite[Theorem 4.6]{Fo91} that every singular $A \in R$ is a product of non-zero idempotent matrices. However, the $\rfix_R$-preorder on the monoid $R^\#$ of zero divisors of $R$ cannot be the right preorder to consider if one's goal is to recover Fountain's result as a corollary of Theorem \ref{thm:3.4}.
In fact, if for some integer $s \ge 2$ the degree-$s$ $\rfix_R$-irreducibles of $R^\#$ were idempotent, then we would get from  Eq.~\eqref{equ:rank-nullity-ineq} that every singular matrix in $R$ factors as a product of at most $s$ idempotents. But this is impossible, since it was proved by Laffey \cite[Sect.~1]{Laf89} that, for any integer $N \ge 1$, there exists a singular matrix $A \in R$ which does not factor as the product of fewer than $N$ idempotents. (Surprisingly enough, this does not happen in $\mathcal M_n(\mathbb{Z})$ for $n \ge 3$, see \cite[Sect.~2]{Laf89}.)

\section*{Acknowledgements}

The first author's work was supported by the ``Ernst Mach Grant --- Worldwide'' of Austria's Agency for Education and Internationalisation (OeAD) under the project No.~ICM-2020-00054, and by the European Union's Horizon 2020 research and innovation programme under the Marie Sk\l{}odowska-Curie grant agreement No.~101021791.

The authors are grateful to Alfred Geroldinger and Daniel Smertnig (University of Graz, AT) for many precious comments, to Pace Nielsen (Brigham Young U\-ni\-ver\-si\-ty, US) for the proof of Lemma \ref{lem:2021-05-05-11:04} (see \href{https://mathoverflow.net/a/394965/16537}{\texttt{https://mathoverflow.net/a/394965/16537}}), and to an anonymous referee for careful reading and very useful editorial comments.

\end{document}